\documentclass[11pt]{article}

\usepackage{amsfonts,amssymb,stmaryrd,amscd,amsmath,latexsym,amsbsy}
\usepackage{pstricks,pst-plot,pst-node}
%%\usepackage{showkeys}
%\usepackage{showlabels}
%marker
\usepackage{a4wide}

\usepackage{amsmath,amsthm,amsfonts}

\theoremstyle{plain}

\newtheorem{theorem}{Theorem}[section]

\newtheorem{corstar}{Corollary*}[section]

\newtheorem{prop}[theorem]{Proposition}

\newtheorem{lemma}[theorem]{Lemma}

\newtheorem{cor}[theorem]{Corollary}

\newtheorem{theoremstar}{Theorem*}[section]

\newtheorem*{sol}{Solution}

\newtheorem*{conj}{Conjecture}

%\redefine{thebibliography}{Publications}

\theoremstyle{definition}

\theoremstyle{remark}

\newcommand{\CC}{\mathbb C}

\newcommand{\ZZ}{\mathbb Z}

\newcommand{\solu}[1]{\begin{sol}{\bf (\ref{#1})}}

\newcommand{\gotg}{\widehat{\mathfrak {g}}}

\newcommand{\glh}{\hat{\mathfrak{gl}}(n+1)}

\newcommand{\An}{\mathcal{A}_n}

\newcommand{\pp}{\mathfrak{p}}

\newcommand{\QQ}{\mathbb{Q}}

\newcommand{\Hb}{\mathrm{Hilb}}

\newcommand{\DT}{\mathrm{DT}}

\pagestyle{plain}

%opening
\title{Donaldson-Thomas theory of $\An\times\mathbf{P}^1$}
\author{Davesh Maulik and Alexei Oblomkov}
%\keywords{Quantum cohomology, Donaldson-Thomas theory, loop algebras}

\begin{document}
%\keywords{Donaldson-Thomas theory; loop algebras; resolution of quotient surface singularities}
%\subjclass{14N35}

\maketitle

\begin{abstract}
We study the relative Donaldson-Thomas theory of $\An\times\mathbf{P}^{1}$, where $\An$ 
is the surface resolution of type $A_n$ singularity.  The action of divisor operators in the theory is expressed in terms of operators of
the affine algebra $\glh$ on Fock space.  Assuming a nondegeneracy conjecture, this gives a complete solution for the theory.  
The results complete the comparison of this theory with
the Gromov-Witten theory of $\An\times\mathbf{P}^{1}$
and the quantum cohomology of the Hilbert scheme of points on $\An$.
\end{abstract}

\tableofcontents

\section{Introduction}

   Let
$\zeta$ be a primitive $(n+1)$-th root of unity and consider the action of the 
cyclic group $\mathbb{Z}_{n+1}$ on $\mathbb{C}^2$ where the generator acts via
$$(z_1,z_2) \mapsto (\zeta z_1,\zeta^{-1} z_2).$$
Let $\An$ be the minimal resolution of the quotient
$$\An \rightarrow \mathbb{C}^2/\mathbb{Z}_{n+1}.$$
The diagonal action of $T= (\mathbb{C}^{\ast})^{2}$ on
$\mathbb{C}^2$ commutes with the cyclic group action and
therefore lifts to a $T$-action on $\An$.

In this paper, we study the $T$-equivariant Donaldson-Thomas theory of the threefold
$$X = \An \times \mathbf{P}^{1}$$
relative to fibers of $X$ over $\mathbf{P}^{1}$.
Donaldson-Thomas invariants are defined by integration over the moduli space 
of ideal sheaves
of $X$. 
In the presence of relative fibers, we consider ideal sheaves with boundary conditions along these fibers.  Although $X$ is quasi-projective, these integrals can be well-defined via equivariant residue.

The main result of this paper is an explicit understanding of specific relative DT invariants in
terms of the action of the affine algebra $\widehat{\mathfrak{gl}}(n+1)$ on its basic representation.
Under the assumption of a nondegeneracy conjecture (see section \ref{generation conjecture}),
this will give a complete evaluation of the relative DT theory of $X$.  

The Gromov-Witten/Donaldson-Thomas conjecture predicts a precise equivalence between the DT theory of a threefold and its Gromov-Witten theory, defined using moduli spaces of stable maps.  Using previous work \cite{gwan, hilban}, we are able to prove the GW/DT conjecture for the relative invariants studied here.  These calculations provide the starting point for the proof \cite{MOOP}
of the correspondence in the case of an arbitrary toric threefold.  Along with the case of $\CC^{2}$ studied in \cite{okpandt}, these surfaces are the only nontrivial cases for which the relative DT theory has been even partially calculated.

\subsection{Definitions}

Given $\beta \in H_{2}(\An,\ZZ)$ and integers $m, \chi \in \ZZ$,
the moduli space of ideal sheaves
$$I_{\chi}(X, (\beta,m))$$
parametrizes ideal sheaves of proper 
subschemes $Z \subset X$ of dimension at most $1$
with
$$-c_{2}(\mathcal{O}_{Z}) = (\beta, m) \in H_{2}(\An \times \mathbf{P}^1,\ZZ) = H_{2}(\An,\ZZ)\oplus \ZZ$$
and $$\chi(\mathcal{O}_Z) = \chi.$$  The moduli space carries a 
$T$-equivariant perfect obstruction theory obtained from the deformation theory of ideal sheaves.  Although it is noncompact, its $T$-fixed locus is compact.

The fiber of $X$ over a point $z \in \mathbf{P}^{1}$ determines a $T$-equivariant divisor
$$\An \times z \subset X.$$
Given $k$ distinct marked points $z_1, \dots, z_k \in \mathbf{P}^{1}$, we will consider the residue theories of $X$ relative to the divisor $S$ given by the disjoint union of the fibers over these points
$$S = \cup \An \times z_{i}.$$

Let $I_{\chi}(X/S, (\beta,m))$ denote the relative moduli space of ideal sheaves, which parametrizes
ideal sheaves on $X$ which intersect the divisor $S$ transversely; the space is partially compactified by allowing $X$ to degenerate along components of $S$.   Under the degeneration of the base $\mathbf{P}^{1}$ to a broken $\mathbf{P}^{1}$, there is a formula relating the DT theory of $X$ to the relative DT theory of each component of the degeneration.  
 See \cite{mnop2} for a more detailed discussion of the geometry of this moduli space and the degeneration formula.  The moduli space has been constructed in \cite{baosenwu} and 
 the foundational details of the obstruction theory and degeneration formula have been announced
 in forthcoming work of J. Li and B. Wu.

For each relative fiber over $z_i$, we impose relative conditions as follows.  A cohomology-weighted partition of $m$ is an unordered collection of pairs
$$\overrightarrow{\mu}=\{(\mu^{(1)}, \gamma_{1}), \dots, (\mu^{(l)}, \gamma_{l})\},$$
where $\mu^{(1)}, \dots, \mu^{(l)}$ is a partition of $m$ with labels $\gamma_{j} \in H_{T}^{*}(\An,\QQ)$.
It follows from Nakajima's construction of $H_{T}^{*}(\Hb_{m}(\An),\QQ)$ that we can naturally associate to $\overrightarrow{\mu}$ a $T$-equivariant cohomology class
$$\overrightarrow{\mu} \in H_{T}^{\ast}(\Hb_{m}(\An),\QQ),$$
in the Hilbert scheme of points of $\An$.  

Suppose we are given $k$ cohomology-weighted partitions
$\overrightarrow{\mu_{1}}, \dots, \overrightarrow{\mu_{k}},$
associated to the relative fibers over $z_{1},\dots, z_{k}$.  For each $z_i$, we have a boundary map
$$\epsilon_i: I_{\chi}(X/S, (\beta,m)) \rightarrow \Hb_{m}(\An)$$
to the Hilbert scheme of $m$ points on $\An$, obtained by sending the ideal sheaf of $Z$ to the
subscheme $Z \cap (\An\times z_i)$.   We can use these maps to define relative conditions by pulling back the cohomology class $\overrightarrow{\mu_{i}}$ on $\Hb_{m}(\An)$ via the map $\epsilon_{i}$.

If we fix a curve class $(\beta,m)\in H_{2}(X,\ZZ)$, the relative DT partition function of $X/S$ is defined to be
$$
\begin{aligned}
\mathsf{Z}_{\DT}(X)_{(\beta,m), \overrightarrow{\mu_{1}}, \dots, \overrightarrow{\mu_{k}}} &=
\sum_{\chi \in \ZZ} q^{\chi} \int_{[I_{\chi}(X/S,(\beta,m)]^{vir}} \prod_{i=1}^{k} \epsilon_{i}^{*}(\overrightarrow{\mu_{i}})\\
  &= \sum q^{\chi} 
\int_{[I_{\chi}(X/S,(\beta,m)^{T}]^{vir}} \frac{\prod_{i=1}^{k} \epsilon_{i}^{*}(\overrightarrow{\mu_{i}})}
{e(\mathrm{Norm}^{vir})}
\end{aligned}
$$
Since $I_{\chi}(X/S)$ is noncompact, the first expression is not well-defined, so we instead use a
residue integral on the fixed locus (which is compact) as a definition.  Here, $e(\mathrm{Norm}^{vir})$
is the Euler class of the virtual normal bundle to the fixed locus.

We can further sum over possible values of $\beta$ to yield the function
$$\mathsf{Z}_{\DT}(X)_{\overrightarrow{\mu_{1}},\dots, \overrightarrow{\mu_{k}}} = 
\sum_{\beta} s^{\beta} \mathsf{Z}_{\DT}(X)_{(\beta,m), \overrightarrow{\mu_{1}}, \cdots, \overrightarrow{\mu_{k}}}
\in \QQ(t_1,t_2)((q))[[s_{1},\dots,s_{n}]],$$
where
$$s^{\beta} = s_{1}^{(\beta,\omega_{1})}\cdot \dots\cdot s_{n}^{(\beta,\omega_{n})}$$
and $\omega_{1}, \dots, \omega_{n} \in H^{2}(\An,\QQ)$ is a dual basis to $E_{1},\dots,E_{n}$.

Finally, we will largely be interested in the partition function obtained by normalizing these invariants with respect to the $\beta=0, m=0$ partition function:
$$
\mathsf{Z}_{\DT}^{\prime}(X)_{\overrightarrow{\mu_{1}}, \dots, \overrightarrow{\mu_{k}}} =
\frac{\mathsf{Z}_{\DT}(X)_{\overrightarrow{\mu_{1}}, \dots, \overrightarrow{\mu_{k}}}}
{\mathsf{Z}_{\DT}(X)_{(0,0),\emptyset, \dots, \emptyset}} \in \QQ(t_1,t_2)((q))[[s_1,\dots,s_n]].
$$
The relative degree $0$ series has already been computed in \cite{mnop2, okpandt}
$$\mathsf{Z}_{DT}(X)_{(0,0), \emptyset, \dots, \emptyset} = M(-q)^{(k-2)\frac{(t_1+t_2)^{2}}{(n+1)t_1t_2}}$$
where 
$$M(q) = \prod_{n=1}^{\infty}\frac{1}{(1-q^{n})^{n}}$$
is the MacMahon function.
\subsection{GW/DT correspondence}

In what follows, let
$$\mathsf{Z}_{GW}^{\prime}(X)_{\overrightarrow{\mu_{1}}, \dots, \overrightarrow{\mu_{k}}}\in \QQ(t_1,t_2)((u))[[s_{1},\dots,s_{n}]]$$
denote the reduced partition function for the relative Gromov-Witten theory of $X$ relative to $k$ fibers.
We refer the reader to \cite{gwan} for notation and results for this theory.

The main theorem of this paper is the following case of the GW/DT correspondence for $X$:

\begin{theorem}\label{gw dt theorem}
Given cohomology-weighted partitions $\overrightarrow{\mu}, \overrightarrow{\nu}$ and
$$\overrightarrow{\rho} = (1,1)^{m}, (2,1)(1,1)^{m-1},\quad \mathrm{or}\quad (1,\omega_{i})(1,1)^{m-1}$$
the partition function
$\mathsf{Z}^{\prime}_{\DT}(X)_{\overrightarrow{\mu}, \overrightarrow{\rho},\overrightarrow{\nu}}$
is a rational function of $q, s_{1}, \dots, s_{n}$.
After the change of variables $q = -e^{iu}$,
$$(-iu)^{l(\mu)+l(\rho)+l(\nu)-m}\mathsf{Z}^{\prime}_{GW}(X)_{\overrightarrow{\mu}, \overrightarrow{\rho},\overrightarrow{\nu}}
=(-q)^{-m}\mathsf{Z}^{\prime}_{\DT}(X)_{\overrightarrow{\mu}, \overrightarrow{\rho},\overrightarrow{\nu}}.$$
\end{theorem}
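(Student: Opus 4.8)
The plan is to prove Theorem \ref{gw dt theorem} by reducing it, via the degeneration formula, to a small set of "building block" invariants on $\An\times\mathbf{P}^1$ with at most three relative fibers, one of which carries the distinguished insertion $\overrightarrow{\rho}$, and then matching these against the parallel building blocks on the Gromov-Witten side established in \cite{gwan}. The central point is that the insertion $\overrightarrow{\rho}$ is precisely a \emph{divisor operator}: in the Nakajima-basis description of $H^*_T(\Hb_m(\An))$, the classes $(1,1)^m$, $(2,1)(1,1)^{m-1}$, and $(1,\omega_i)(1,1)^{m-1}$ span the space of divisor classes on the Hilbert scheme (the pure-dimensional boundary and the curve-class divisors pulled back from $\An$). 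So the strategy mirrors the $\CC^2$ case of \cite{okpandt}: understand the operator on Fock space obtained by capping with $\overrightarrow{\rho}$ in the relative DT vertex, identify it with the corresponding $\widehat{\mathfrak{gl}}(n+1)$ operator on the basic representation, and compare with the GW divisor operator computed in \cite{gwan}.

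The steps, in order, would be as follows. First I would set up the TQFT/operator formalism: fixing the two caps over $0,\infty\in\mathbf{P}^1$ (the relative conditions $\overrightarrow{\mu},\overrightarrow{\nu}$) and the tube with the $\overrightarrow{\rho}$-insertion at a third point, the degeneration formula expresses $\mathsf{Z}'_{\DT}(X)_{\overrightarrow{\mu},\overrightarrow{\rho},\overrightarrow{\nu}}$ as a matrix element $\langle \overrightarrow{\mu}\,|\,\mathsf{M}_{\overrightarrow{\rho}}\,|\,\overrightarrow{\nu}\rangle$ of an operator $\mathsf{M}_{\overrightarrow{\rho}}$ acting on the localized equivariant cohomology $\bigoplus_m H^*_T(\Hb_m(\An))$, which by Nakajima is a Fock space. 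Second, I would compute $\mathsf{M}_{\overrightarrow{\rho}}$ explicitly. For $\overrightarrow{\rho}=(1,1)^m$ this is essentially the purely equivariant (degree-preserving in $\beta$) part, handled by the MacMahon normalization and the rubber calculus for the trivial bubble; for the two genuinely interesting cases one uses the localization of the relative DT vertex on $\An\times\mathbf{P}^1$ with respect to $T$ (whose fixed loci on $\An$ are the $n+1$ torus-fixed points, with the exceptional $\mathbf{P}^1$'s $E_1,\dots,E_n$ joining them) to express $\mathsf{M}_{\overrightarrow{\rho}}$ in terms of the equivariant vertex of $\CC^2\times\mathbf{P}^1$ from \cite{okpandt}, glued along the edges with the appropriate edge factors in $s_1,\dots,s_n$. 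This realizes $\mathsf{M}_{\overrightarrow{\rho}}$ as a specific element of $\widehat{\mathfrak{gl}}(n+1)$ acting on its basic representation, and in particular shows it is a rational function of $q$ and the $s_i$.

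The third step is rationality of $\mathsf{Z}'_{\DT}$: once $\mathsf{M}_{\overrightarrow{\rho}}$ is identified with a fixed affine-Lie-algebra operator with rational-function coefficients, its matrix elements in the Nakajima basis are rational in $q,s_1,\dots,s_n$, giving the first assertion. The fourth and final step is the correspondence itself: comparing with \cite{gwan}, where the analogous GW divisor operator on $\bigoplus_m H^*_T(\Hb_m(\An))$ was computed and shown, after $q=-e^{iu}$ and the stated $(-iu)^{\bullet}$ normalization, to agree term-by-term with the quantum-cohomology multiplication operator on $\Hb_m(\An)$; since both theories reduce to the \emph{same} operator on the \emph{same} Fock space, and the two caps match by the GW/DT correspondence for the fiber classes (the $\CC^2$ case of \cite{okpandt} together with the identification of stationary caps), the DT matrix element equals the GW one up to the normalization and the overall $(-q)^{-m}$ factor tracking the shift in Euler characteristic. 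I expect the main obstacle to be the explicit evaluation in step two: controlling the relative DT vertex for $\An\times\mathbf{P}^1$ with the $\overrightarrow{\rho}$-insertion and proving it assembles into exactly the claimed $\widehat{\mathfrak{gl}}(n+1)$ operator — this requires a careful analysis of the $T$-fixed loci of the relative moduli space, the rubber integrals over the bubbled components, and a nontrivial resummation of the edge contributions in the $s_i$ into closed form, which is where the bulk of the combinatorial and geometric work lies.
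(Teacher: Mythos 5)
Your overall skeleton (reduce to divisor operators acting on the Fock space $\bigoplus_m H_T^*(\Hb_m(\An))$, then compare with the GW/Hilbert-scheme results of \cite{gwan}, \cite{hilban}) matches the paper, but your second step contains the genuine gap: you propose to \emph{compute} the operator $\mathsf{M}_{\overrightarrow{\rho}}$ directly, by $T$-localization of the relative DT theory of $\An\times\mathbf{P}^1$, writing it as $\CC^2\times\mathbf{P}^1$ vertices from \cite{okpandt} glued along edges and then resumming the edge contributions in $s_1,\dots,s_n$ into a closed-form $\glh$ operator. This is not a proof strategy so much as a restatement of the problem, and there is a concrete reason it does not go through as described: the $T$-fixed loci of the relative and rubber moduli spaces are not isolated and do not decompose into a product of local vertices --- the target bubbles, and the fixed loci consist of ``skewer'' and ``twistor'' components whose contributions are integrals over positive-dimensional spaces (e.g.\ over loci of $T$-fixed stable maps to $\Hb_m(\An)$), not equivariant constants one can tabulate and resum. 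Even in the $\CC^2$ case of \cite{okpandt} the divisor operator was not obtained by brute-force vertex resummation. You flag this step as ``the main obstacle,'' but you give no mechanism for overcoming it, and that mechanism is exactly the content of the paper.

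The paper's actual route avoids the direct computation entirely. The rubber operator $\Theta^{\DT}$ (related to your $\mathsf{M}_{\overrightarrow{\rho}}$ by rigidification and the cap/tube evaluations) is pinned down by a \emph{reconstruction theorem} imported from \cite{hilban} (Proposition \ref{reconstruction}): the operator $(t_1+t_2)\Omega_+$ is the unique operator satisfying a short list of structural properties --- factorization over parts labelled by $1$ (Proposition \ref{factorization}), a dimension/linearity constraint (Proposition \ref{dimension}), vanishing of most fixed-point matrix elements mod $(t_1+t_2)^2$ (Proposition \ref{vanishing}), two exact special evaluations (Proposition \ref{exactevaluation}), and the vacuum expectation (Proposition \ref{vacuum}). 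These are verified for $\Theta^{\DT}$ not by resummation but by: (i) the $(t_1+t_2)$-divisibility of all positive-degree invariants (Proposition \ref{divisibility}), proven by a delicate induction combining the equivariant vertex multiplicity count with the degeneration formula and the invertibility of $\mathsf{M}_0$; (ii) a toric compactification of $\An$ for the factorization and dimension statements; (iii) the identification of DT and GW obstruction theories on the twistor loci (Lemma \ref{hilbdtcompare}), which lets the mod-$(t_1+t_2)^2$ computations be read off from the quantum cohomology of $\Hb_m(\An)$ in \cite{hilban}; and (iv) a direct computation only of the vacuum expectation, where the minimal box configurations are simple enough to enumerate. Without the uniqueness theorem and the divisibility statements, your proposal has no way to convert the localization data into the claimed $\glh$ operator, hence no way to establish either the rationality or the GW/DT matching. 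Note also that the unconditional theorem covers only $\overrightarrow{\rho}$ equal to $(1,1)^m$ or a divisor class; your remark that these classes ``span the space of divisor classes'' is the reason for that restriction, and extending beyond divisors requires the nondegeneracy conjecture of section \ref{generation conjecture}, which your argument would also need to invoke.
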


Both the rationality and the equivalence with Gromov-Witten theory are of course valid for fixed $\beta \in H_{2}(\An)$ and, in that form, are expected to hold for arbitrary threefolds.
The rationality in $s_{1},\dots, s_{n}$ is a special feature of this geometry.

The condition on $\overrightarrow{\rho}$ is equivalent to asking $\overrightarrow{\rho}$ to equal
$1$ or a divisor class in $H_{T}^{2}(\Hb_m(\An),\QQ)$.  We will give precise expressions for the above partition functions in terms of operators in the affine algebra $\widehat{\mathfrak{gl}}(n+1)$.  Under the assumption of a nondegeneracy conjecture in section~\ref{generation conjecture} on these operators, we can remove
the hypothesis on $\overrightarrow{\rho}$.

\begin{theoremstar}\label{gw dt with conjecture}
Under the assumption of section~\ref{generation conjecture}, the statement of theorem \ref{gw dt theorem} is true for the partition function $$\mathsf{Z}_{\DT}^{\prime}(X)_{\overrightarrow{\mu_{1}}, \dots, \overrightarrow{\mu_{k}}}$$
with arbitrary relative conditions $\overrightarrow{\mu_{1}},\dots, \overrightarrow{\mu_{k}}$.
\end{theoremstar}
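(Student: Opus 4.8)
**Proof proposal for Theorem* \ref{gw dt with conjecture}.**

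The plan is to reduce the general statement to Theorem \ref{gw dt theorem} by using the divisor conditions $\overrightarrow{\rho}$ as operators that generate the full cohomology of the Hilbert scheme. First I would package the relative DT invariants as matrix elements of operators on the Fock space $\bigoplus_m H_T^*(\Hb_m(\An),\QQ)$: each relative fiber corresponds to a vector in Fock space, and the DT-theory of $\An\times\mathbf{P}^1$ with a chain of $\mathbf{P}^1$'s (obtained via the degeneration formula for degenerations of the base) assembles into compositions of such operators. In particular, inserting a tube with a single divisor insertion $\overrightarrow{\rho}$ defines, for each of the three allowed classes, an explicit operator on Fock space; by the results quoted in the introduction and the identification with $\widehat{\mathfrak{gl}}(n+1)$, these are (up to the normalization $\mathsf{Z}_{\DT}'$ and the degree-$0$ factor $M(-q)$) the geometric realizations of the divisor operators of $H_T^*(\Hb_m(\An))$. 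So the divisor insertions act on the partition function exactly as multiplication by divisor classes on $\Hb_m(\An)$, with known $q$-dependence.

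The key step is then a \emph{generation} argument: the divisor classes appearing in $\overrightarrow{\rho}$, together with their images under the $q$-dependent conjugations coming from varying the position $z$ of the relative point, should generate $H_T^*(\Hb_m(\An),\QQ)$ as an algebra over $\QQ(t_1,t_2)$ — or more precisely, the operators they define, together with the $\widehat{\mathfrak{gl}}(n+1)$-action, act irreducibly enough on Fock space that any cohomology-weighted partition $\overrightarrow{\mu}$ can be written as a polynomial combination of divisor insertions applied to simpler classes. This is precisely the content of the nondegeneracy conjecture of section~\ref{generation conjecture}, which we are permitted to assume. Granting it, an arbitrary insertion $\overrightarrow{\mu_i}$ at $z_i$ is replaced by a (finite, $q$-rational) combination of divisor insertions $\overrightarrow{\rho}$ spread over auxiliary marked points on a degenerate base; applying the degeneration formula in reverse collapses the extra components and expresses $\mathsf{Z}_{\DT}'(X)_{\overrightarrow{\mu_1},\dots,\overrightarrow{\mu_k}}$ as a $\QQ(q,s_1,\dots,s_n)$-linear combination of partition functions each having at most one nontrivial relative condition of the special type $\overrightarrow{\rho}$, to which Theorem \ref{gw dt theorem} applies directly.

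It remains to propagate rationality and the GW/DT change of variables through this reduction. Rationality in $q$ and $s_1,\dots,s_n$ is preserved because the coefficients expressing $\overrightarrow{\mu}$ in terms of divisor operators are themselves rational in $q$ (the divisor operators depend on $q$ only through the explicit affine-algebra formulas, whose matrix coefficients are rational), and a finite $\QQ(q,\underline s)$-combination of rational functions is rational. For the GW side, the parallel structure holds verbatim: the Gromov-Witten divisor operators for $\An\times\mathbf{P}^1$ (as in \cite{gwan, hilban}) satisfy the same generation relations after the substitution $q=-e^{iu}$, because the GW/DT-matched operators are intertwined by this substitution on the classes for which Theorem \ref{gw dt theorem} is already known; so the reduction of $\overrightarrow{\mu}$ to $\overrightarrow{\rho}$-insertions is compatible with the correspondence, and the degree-shift exponent $l(\mu)+l(\rho)+l(\nu)-m$ behaves additively under the degeneration (each inserted tube contributes matching powers of $-iu$ on the GW side and $-q$ on the DT side), so the prefactors match. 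The main obstacle is the bookkeeping of these degree shifts and normalizations across the degeneration formula — ensuring that the $M(-q)$ factors from degree-$0$ contributions and the powers of $-iu$ versus $(-q)^{-m}$ combine correctly when several auxiliary components are introduced and then collapsed — rather than any new geometric input, since all the geometry has been isolated into Theorem \ref{gw dt theorem} and the assumed nondegeneracy statement.
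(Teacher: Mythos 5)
Your proposal is correct and follows essentially the same route as the paper: package relative insertions as operators on Fock space via the degeneration formula, use the assumed nondegeneracy to write an arbitrary class $\overrightarrow{\rho}$ (equivalently $M_{\overrightarrow{\rho}}$) as a combination $\sum c_{a,b_1,\dots,b_n}(q,s)\,M_D^a\prod_i M_{(1,\omega_i)}^{b_i}$ with coefficients rational in $q,s_1,\dots,s_n$, deduce rationality and transfer the GW/DT matching from the already-established divisor cases of Theorem \ref{gw dt theorem}, and reduce $k>3$ relative fibers to three-point functions by degenerating the base $\mathbf{P}^1$ into a chain of three-pointed components. The only cosmetic difference is that the paper derives the generation statement from the conjecture as literally stated (one-dimensionality of the joint eigenspaces of $M_D,M_{(1,\omega_i)}$) via semisimplicity of the DT ring and a Vandermonde argument, whereas you treat generation itself as the assumed input — a short, harmless extra step.
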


\subsection{Relation to Quantum Cohomology of $\Hb(\An)$}

For $\overrightarrow{\mu},\overrightarrow{\nu},\overrightarrow{\rho}\in H_{T}^{*}(\Hb_{m}(\An),\QQ)$,
the genus $0$, $3$-pointed, $T$-equivariant Gromov-Witten invariants of
$\Hb_{m}(\An)$ is defined by the following sum over curve classes
$$
\langle \overrightarrow{\mu}, \overrightarrow{\rho},\overrightarrow{\nu}\rangle^{\Hb}
=\sum_{\beta \in H_{2}(\Hb(\An),\ZZ)} \langle \overrightarrow{\mu}, \overrightarrow{\rho},\overrightarrow{\nu}\rangle^{\Hb}_{0,3,\beta}
q^{D\cdot \beta}\prod s_{i}^{(1,\omega_i)\cdot\beta}
$$
Here, we encode the degree of $\beta$ with respect to the basis of divisors given by the cohomology-weighted partitions
$$D = -(2,1)(1,1)^{m-2}, \quad (1,\omega_{i})=(1,\omega_{i})(1,1)^{m-1}.$$
The results here, along with our earlier paper \cite{hilban} yield the Donaldson-Thomas/Hilbert correspondence for the family of $\An$ surfaces.

\begin{theorem}\label{hilb dt theorem}
For $\overrightarrow{\rho} \in H_{T}^{2}(\Hb_{m}(\An),\QQ),$ or $\overrightarrow{\rho}= (1)^{m}$, we have
$$Z_{\DT}^{\prime}(X)_{\overrightarrow{\mu},\overrightarrow{\rho},\overrightarrow{\nu}} =
q^{m} \langle \overrightarrow{\mu},\overrightarrow{\rho},\overrightarrow{\nu}
\rangle^{\Hb}.$$
Under the assumption of section~\ref{generation conjecture}, the statement holds for all $\overrightarrow{\rho}$.
\end{theorem}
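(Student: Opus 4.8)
The plan is to obtain Theorem~\ref{hilb dt theorem} by composing Theorem~\ref{gw dt theorem} with the Gromov--Witten/Hilbert-scheme correspondence for the family of surfaces $\An$, established in \cite{hilban} (building on \cite{gwan}). The logical picture is the familiar commuting triangle of the three theories attached to $X = \An \times \mathbf{P}^{1}$ and to $\Hb_m(\An)$: the relative Donaldson--Thomas theory of $X$, the relative Gromov--Witten theory of $X$, and the genus-$0$ equivariant quantum cohomology of $\Hb_m(\An)$. Two edges of this triangle are already known, and Theorem~\ref{hilb dt theorem} is the third.

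The first ingredient is the statement, proved in \cite{hilban}, that the reduced relative Gromov--Witten theory of $X$ along three fibers computes the genus-$0$ three-point functions of $\Hb_m(\An)$: after the change of variables $q = -e^{iu}$ and under the identification of the divisor variables $s_1,\dots,s_n$ common to both sides, the quantity $(-iu)^{l(\mu)+l(\rho)+l(\nu)-m}\,\mathsf{Z}^{\prime}_{GW}(X)_{\overrightarrow{\mu},\overrightarrow{\rho},\overrightarrow{\nu}}$ agrees with $\langle\overrightarrow{\mu},\overrightarrow{\rho},\overrightarrow{\nu}\rangle^{\Hb}$, for all cohomology-weighted partitions $\overrightarrow{\mu},\overrightarrow{\rho},\overrightarrow{\nu}$ of $m$. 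Part of recording this step is checking that the degree-$0$ reduction of $\mathsf{Z}_{GW}(X)$ corresponds to the normalization implicit in $\langle\cdot,\cdot,\cdot\rangle^{\Hb}$ as a deformation of the classical triple intersection pairing on $H^{*}_{T}(\Hb_m(\An))$, and that the Gromov--Witten exponents of $q$ and $s^{\beta}$ match the exponents $D\cdot\beta$ and $(1,\omega_i)\cdot\beta$ used in the definition of $\langle\cdot,\cdot,\cdot\rangle^{\Hb}$; this is a comparison of the conventions of \cite{gwan}, \cite{hilban}, and the present paper.

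The second ingredient is Theorem~\ref{gw dt theorem}. For $\overrightarrow{\rho}$ equal to $(1)^{m}$ or a divisor class in $H^{2}_{T}(\Hb_m(\An),\QQ)$ --- precisely the hypothesis appearing in Theorem~\ref{hilb dt theorem} --- that theorem gives both the rationality of $\mathsf{Z}^{\prime}_{\DT}(X)_{\overrightarrow{\mu},\overrightarrow{\rho},\overrightarrow{\nu}}$ in $q,s_1,\dots,s_n$, which makes the substitution $q = -e^{iu}$ meaningful, and the identity $(-iu)^{l(\mu)+l(\rho)+l(\nu)-m}\,\mathsf{Z}^{\prime}_{GW}(X)_{\overrightarrow{\mu},\overrightarrow{\rho},\overrightarrow{\nu}} = (-q)^{-m}\,\mathsf{Z}^{\prime}_{\DT}(X)_{\overrightarrow{\mu},\overrightarrow{\rho},\overrightarrow{\nu}}$. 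Eliminating the common Gromov--Witten quantity between the two ingredients yields $\mathsf{Z}^{\prime}_{\DT}(X)_{\overrightarrow{\mu},\overrightarrow{\rho},\overrightarrow{\nu}} = q^{m}\,\langle\overrightarrow{\mu},\overrightarrow{\rho},\overrightarrow{\nu}\rangle^{\Hb}$ once the powers of $q$ and the signs have been tracked, which is the assertion. For the final sentence of the theorem one replaces the appeal to Theorem~\ref{gw dt theorem} by Theorem~\ref{gw dt with conjecture}, removing the restriction on $\overrightarrow{\rho}$ under the nondegeneracy assumption of section~\ref{generation conjecture}; the input from \cite{hilban} already holds for arbitrary $\overrightarrow{\rho}$, so no further argument is needed.

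Since the two correspondences being composed are proved elsewhere, the proof introduces no new geometry, and the main obstacle is the careful reconciliation of conventions: matching the degree-$0$ normalizations on the two sides, the Euler-characteristic bookkeeping $q^{\chi}$ against the divisor exponent $D\cdot\beta$ with $D = -(2,1)(1,1)^{m-2}$ (this reindexing is what produces the prefactor $q^{m}$), the dimension shift $l(\mu)+l(\rho)+l(\nu)-m$ coming from the virtual dimensions, and --- most delicately --- the precise relation between the Donaldson--Thomas variable $q$ and the quantum parameter of $\Hb_m(\An)$, so that the prefactor in Theorem~\ref{hilb dt theorem} is exactly $q^{m}$ and not some other power or sign. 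None of this is deep, but it must be carried out explicitly.
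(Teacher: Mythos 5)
Your proposal is sound in substance but it is not the route the paper takes, so let me compare. The paper does not deduce the DT/Hilbert statement from Theorem \ref{gw dt theorem}: instead it proves Theorems \ref{gw dt theorem} and \ref{hilb dt theorem} simultaneously and independently, by evaluating the DT divisor operators explicitly. Concretely, Theorem \ref{dt theorem} (the operator formula for $\Theta^{\DT}$), together with the rigidification and degeneration arguments of section \ref{proof of the main theorems}, gives $M_{D}=M_{D}^{cl}+(t_1+t_2)q\partial_q(\Omega_0+\Omega_+)$ and $M_{(1,\omega_i)}=M_{(1,\omega_i)}^{cl}+(t_1+t_2)s_i\partial_{s_i}\Omega_+$, and these closed formulas are then matched directly against the quantum multiplication operators of Theorems 2.1--2.2 of \cite{hilban} (for Theorem \ref{hilb dt theorem}) and against \cite{gwan} (for Theorem \ref{gw dt theorem}); for arbitrary $\overrightarrow{\rho}$ the conditional statement is obtained from the ring structure $\circ$ and the generation argument of Corollary* in section \ref{generation conjecture}. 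Your route --- composing the DT/GW edge of the triangle with the GW/Hilbert edge from \cite{gwan,hilban} --- is logically legitimate and not circular, since Theorem \ref{gw dt theorem} is proved without using Theorem \ref{hilb dt theorem}. What the paper's direct comparison buys is that the bookkeeping you correctly identify as the delicate point (the $q^m$ versus $(-q)^m$ prefactor, the $u\leftrightarrow q$ dictionary, the degree-$0$ normalizations, and the reindexing by $D\cdot\beta=\chi-m$) is settled once and for all at the level of operator formulas, rather than having to be re-derived by reconciling the conventions of three different papers; your route would have to carry out that reconciliation explicitly, and as written you only flag it.

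One imprecision to correct: for the final sentence of the theorem you assert that the GW$(X)$/Hilbert input ``already holds for arbitrary $\overrightarrow{\rho}$'' in \cite{hilban}, so that only the DT/GW edge needs the conjecture. That is not accurate: unconditionally, the prior papers establish the GW/Hilbert comparison only for divisor (and identity) insertions, and extending it to arbitrary $\overrightarrow{\rho}$ requires exactly the same nondegeneracy/generation statement --- divisors must generate the deformed ring so that all three-point functions are determined by the divisor operators. Since that conjecture is being assumed in this part of the theorem anyway, your conclusion still follows, but the argument should be routed through the ring-generation mechanism (as in the paper's Corollary* and the proof of Theorem* \ref{gw dt with conjecture}) on both edges of the triangle, not by citing \cite{hilban} as if the general case were already known there.
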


This yields the following triangle of equivalences:

\begin{figure}[hbtp]\psset{unit=0.5 cm}
  \begin{center}
    \begin{pspicture}(-6,-2)(10,6)%\showgrid
    \psline(0,0)(2,3.464)(4,0)(0,0)
    \rput[rt](0,0){
        \begin{minipage}[t]{3.64 cm}
          \begin{center}
            Gromov-Witten \\ theory of $\An \times \mathbf{P}^1$
          \end{center}
        \end{minipage}}
    \rput[lt](4,0){
        \begin{minipage}[t]{3.64 cm}
          \begin{center}
             Donaldson-Thomas\\ theory of $\An \times \mathbf{P}^1$
          \end{center}
        \end{minipage}}
    \rput[cb](2,4.7){
        \begin{minipage}[t]{4 cm}
          \begin{center}
           Quantum cohomology \\ of $\Hb(\An)$
          \end{center}
        \end{minipage}}
    \end{pspicture}
  \end{center}
\end{figure}

In the case of $\CC^{2}$ this triangle has been established in the papers \cite{localcurves},\cite{okpanhilb}\cite{okpandt}.  
While the
equivalence between Gromov-Witten theory and Donaldson-Thomas
theory is expected to hold for arbitrary threefolds, the
relationship with the quantum cohomology of the Hilbert scheme
breaks down for a general surface, at least in the specific form
we describe here.  Our work for $\An$ surfaces provides the only
other examples for which this triangle is known to hold.

\subsection{Overview}
This paper follows a strategy based on our earlier work \cite{hilban}, motivated by the correspondence with quantum cohomology.  The divisor operators are evaluated by proving a sequence of partial evaluations which uniquely determine them via a reconstruction statement. The key idea is to exploit the partial geometric relationship between ideal sheaves on $\An\times\mathbf{P}^{1}$ and rational curves on $\Hb(\An)$, using techniques first developed in \cite{okpandt, okpanhilb} for the case of $\CC^{2}$. Although these moduli spaces are quite different, we show that the calculation of specific DT invariants can be reduced to the quantum cohomology invariants evaluated in \cite{hilban} and that these imply the desired result in general.  In the case of both $\CC^{2}$ and $\An$, it is the existence of a holomorphic symplectic form and the resulting $t_1+t_2$-divisibility statements that enable us to make a direct geometric comparison between the distinct geometries. Because of this comparison, most of the arguments in this paper are schematic. In particular, as we plan to study later, they should apply with minor modification to the stable pairs theory introduced in \cite{pt}. For the sake of brevity, arguments and results from these two earlier papers will just be cited in this paper when they apply without modification.

%This paper is written in close analogy with the results of \cite{hilban}, using techniques first developed in \cite{okpandt} for the case of $\CC^{2}\times\mathbf{P}^{1}$.  The key idea in the latter paper is to exploit the geometric relationship between ideal sheaves on $\CC^{2}\times\mathbf{P}^{1}$
%and rational curves on $\Hb(\CC^{2})$.  Although these moduli spaces are quite different, they show that the calculation of DT invariants can be reduced to the quantum cohomology invariants (studied earlier in \cite{okpanhilb}).  Similarly, for the $\An$ surfaces,
%we explain how to reduce the main evaluations to results from our earlier paper \cite{hilban}.  In the case of both $\CC^{2}$ and $\An$, it is the existence of a holomorphic symplectic form and the resulting $t_1+t_2$-divisibility statements that enable us to make a direct geometric comparison between the distinct geometries.  
%Because of this comparison, most of the arguments in this paper are schematic; in particular, as we plan to study later, they should apply with minor modification to the stable pairs theory introduced in \cite{pt}.
%For the sake of brevity, arguments and results from these two earlier papers will just be cited in this paper when they apply without modification.

\subsection{Acknowledgements}
We wish to thank J. Bryan, P. Etingof, A. Okounkov and R. Pandharipande for many useful discussions.
D.M. was partially supported by an NSF Graduate Fellowship and a Clay Research Fellowship.
A.O. was partially supported by NSF grant DMS-0111298 and DMS-0701387.

\section{Rubber theory of $\An\times\mathbf{P}^{1}$}

The proof of theorem \ref{gw dt theorem} will proceed by direct evaluation of the partition functions.  It turns out these can be expressed in terms of nonrigid relative invariants, or rubber integrals, which will be the main focus of this paper.  In this section, 
after some preliminaries, we introduce these integrals and give the explicit formula for their evaluation.

\subsection{Notation}

We first set notation for the geometry of the $\An$ surfaces.
The divisor classes of the rational curves $E_{1},\dots, E_{n}$
span $H^{2}(\An,\QQ)$ and, along with the identity class, span the full 
cohomology ring of $\An$.  We will also work with the dual basis
$\{\omega_1,\dots,\omega_n\}$ of $H^{2}(\An,\QQ)$, defined by the
property that
$$\langle\omega_{i},E_{j}\rangle = \delta_{i,j}$$
under the Poincare pairing.

Under the $T$-action, there are $n+1$ fixed points $p_1, \dots,
p_{n+1}$; the tangent weights at the fixed point $p_i$ are given
by
\begin{gather*}
w^L_i:=(n+2-i)t_1+(1-i)t_2,\\
w^R_i:=(-n+i-1)t_1+it_2.
\end{gather*}
The $E_i$ are the $T$-fixed curves joining $p_{i}$ to $p_{i+1}$.
We denote by $E_{0}$ and $E_{n+1}$ for the noncompact $T$-fixed
curve direction at $p_{1}$ and $p_{n+1}$ respectively.
\begin{center}
\begin{picture}(300,130)(0,0) \put(150,30){\line(-3,1){60}}
\put(90,50){\line(-1,1){50}} \put(64,52){$3t_1$}
\put(90,50){\vector(-1,1){20}} \put(90,50){\vector(3,-1){25}}
\put(150,30){\vector(-3,1){25}} \put(72,36){$t_2-2t_1$}
\put(105,25){$2t_1-t_2$} \put(90,50){\circle{2}}
\put(90,50){\circle{1}} \put(90,55){$p_1$}
\put(150,30){\circle{1}}\put(150,30){\circle{2}}
\put(150,35){$p_2$}
\put(150,30){\line(3,1){60}}\put(150,30){\vector(3,1){25}}
\put(210,50){\vector(-3,-1){25}} \put(210,50){\line(1,1){50}}
\put(210,50){\vector(1,1){20}}\put(210,50){\circle{1}}
\put(210,50){\circle{2}} \put(202,55){$p_3$} \put(223,52){$3t_2$}
\put(163,25){$t_1-2t_2$}\put(198,36){$2t_2-t_1$}
\end{picture}
\end{center}

There is an identification of lattices between $H_{2}(\An, \mathbb{Z})$
with the $A_n$ root lattice obtained by sending the exceptional
curves $E_{i}$ to the simple roots $\alpha_{i,i+1}$. Under this
identification, there is a distinguished set of effective curve
classes
$$\alpha_{ij} = E_{i}+ \dots + E_{j-1}$$
which correspond to positive roots in the $A_{n}$ lattice.

\subsection{Rubber geometry}\label{rubber geometry}

Given discrete invariants $\beta, m, \chi$, the rubber moduli space of ideal sheaves
$$I_{\chi}(X, (\beta,m))^{\sim}$$
parametrizes ideal sheaves on $\An\times\mathbf{P}^1$, relative to the fibers over $0$ and $\infty$,
up to equivalence given by $\CC^{\ast}$-scaling along the $\mathbf{P}^1$ direction.
That is, our definition is nearly identical to the usual DT theory of $X$ relative to two fibers, except that now ideal sheaves are considered equivalent if they are isomorphic after applying an automorphism of $\mathbf{P}^{1}$ fixing $0$ and $\infty$.  This moduli space occurs naturally as components of the boundary compactification of the relative moduli space from before.

As before, $I_{\chi}(X,(\beta,m))^{\sim}$ inherits a $T$-action and a $T$-equivariant perfect obstruction theory  of rank $2m -1.$  Relative conditions are again obtained by boundary maps $\epsilon_{0},\epsilon_{\infty}$ to $\Hb_{m}(\An)$.
Although it is noncompact, invariants can again be defined via residue, as the $T$-fixed locus is compact.

We use angle bracket notation for the following rubber DT invariants, which will be the main focus of this paper.  Given two cohomology-weighted partitions of $m$, we have
$$\langle \overrightarrow{\mu}, \overrightarrow{\nu}\rangle^{\DT,\sim}_{\beta,\chi}
= \int_{[I_{\chi}(X,(\beta,m))^{T,\sim}]^{vir}} \frac{\epsilon_{0}^{*}(\overrightarrow{\mu})\epsilon_{\infty}^{*}
(\overrightarrow{\nu})}{e(\mathrm{Norm}^{vir})}.$$
We suppress the $m$ in our notation, since it can be deduced from $\overrightarrow{\nu}$.

We have the partition function
$$\langle \overrightarrow{\mu}, \overrightarrow{\nu}\rangle^{\DT,\sim}_{\beta}
= \sum_{\chi} q^{\chi}\langle \overrightarrow{\mu}, \overrightarrow{\nu}\rangle^{\DT,\sim}_{\beta,\chi}.$$

Since the case of $\beta=0$ can be easily evaluated using the work of \cite{okpandt}, we will mainly focus on the sum of contribution from nonzero $\beta$:
$$\langle \overrightarrow{\mu}, \overrightarrow{\nu}\rangle^{\DT,\sim}_{+} 
= \sum_{\beta\ne 0,\chi} q^{\chi}s^{\beta}
\langle \overrightarrow{\mu}, \overrightarrow{\nu}\rangle^{\DT,\sim}_{\beta,\chi}
\in \QQ(t_1,t_2)((q))[[s_{1},\dots,s_{n}]].$$

\subsection{Fock space}\label{Nakajima ops}

In this section, we recall details from \cite{hilban} on the Fock space model for $H_{T}^{\ast}(\Hb(\An),\QQ)$, first studied in \cite{nakajima,grojnowski,qin-wang}.  The graded sum
$$\mathcal{F}_{\An}=\bigoplus_{m\geq0} H_{T}^{\ast}(\Hb_{m}(\An),\QQ)$$
can be given (after extension of scalars) the structure of an irreducible representation of a certain Heisenberg algebra $\mathcal{H}$ constructed from $H_{T}^{*}(\An,\QQ)$ as follows.
The algebra $\mathcal{H}$ is generated over $\QQ(t_1,t_2)$
by a central
element $c$ and elements
$$\mathfrak{p}_{k}(\gamma), \quad \gamma \in H_{T}^{\ast}(\An, \QQ),\quad k \in \ZZ, \quad k \ne 0,$$
so that $\mathfrak{p}_{k}(\gamma)$ are $\QQ(t_1,t_2)$-linear in the labels
$\gamma$. The Lie algebra structure on $\mathcal{H}$ is defined by
the following commutation relations
\begin{gather*}
 [\pp_k(\gamma_{1}), \pp_l(\gamma_{2})] = -k \delta_{k+l}\langle\gamma_1, \gamma_2\rangle
\cdot c, \\
 [c, \pp_k(\gamma)] = 0.
\end{gather*}

There are two natural bases for $\mathcal{F}_{\An}$.  The first, already discussed, is the Nakajima basis given by cohomology-weighted partitions.  Given a basis of $H_{T}^{*}(\An,\QQ)$, there is an associated Nakajima basis obtained by taking cohomology-weighted partitions with labels given by basis elements.
We will mainly work with the Nakajima basis with labels lying in the basis
$$\{1, \omega_{1}, \dots, \omega_{n}\}.$$

The second natural basis, obtained after extension of scalars, is given by
$T$-fixed points on $\Hb_{m}(\An)$. 
Given an $(n+1)$-tuple $\overrightarrow{\rho}$ of partitions $\rho^{1}, \dots, \rho^{n+1}$ such that $\sum |\rho_{i}| = m$, we associate the following fixed point $J_{\overrightarrow{\rho}}$ of $\Hb_m(\An)$.
First, given a partition $\rho$, we have the monomial ideal $I_{\rho} \subset \CC[x,y]$ defined by
$$I_{\rho} =(x^{\rho_1},yx^{\rho_2},\dots,y^{l-1}x^{\rho_l})$$
At each fixed point $p_{i}$ of $\An$, 
we can identify the $T$-fixed affine chart centered at $p_i$ with $\CC^2$ by identifying the weights of 
$x$ and $y$ with
$w_{L}^{i}$ and $w_{R}^{i}$ respectively; the restriction of the associated subscheme
to this chart is given by the monomial ideal $I_{\rho^{i}}$ associated to $\rho^{i}$.  The
associated cohomology class $[J_{\overrightarrow{\rho}}]$ give us a basis indexed by multipartitions of $m$.

 Finally, we have the inner product on $\mathcal{F}_{\An}\otimes\QQ(t_1,t_2)$
given by the Poincare pairing on $\Hb(\An)$, defined by $T$-equivariant residue.
We will denote this inner product by undecorated angle brackets
$$\langle \overrightarrow{\mu} | \overrightarrow{\nu}\rangle.$$
We point out here that all these objects can be defined over the ring $R = \QQ[t_1,t_2]_{(t_1+t_2)}$ of rational functions without poles along $t_1+t_2$.

\subsection{Affine algebra operators}

Consider the following affine algebra $\gotg = \widehat{\mathfrak{gl}}(n+1)$ (defined over $\QQ$).  It is generated by
elements
$$x(k)= x\cdot t^{k}, \quad x \in \mathfrak{gl}(n+1),\quad k \in \ZZ,$$
a central element $c$ and a differential $d$.  The defining
relations are
\begin{gather*}
[x(k),y(l)]=[x,y](k+l)+k\delta_{k+l,0}tr(xy)c,\\
[d,x(k)]=kx(k),\quad [d,c]=0,
\end{gather*}
where $tr(xy)$ refers to the trace of the matrix $xy$. 

The Cartan subalgebra of $\gotg$
is given by $\mathfrak{h}\oplus \QQ c\oplus \QQ d$, where  $\mathfrak{h}$ is the
Cartan subalgebra of diagonal matrices.
There are distinguished weights $\Lambda, \delta$ characterized by
$$\Lambda(\mathfrak{h}) = \delta(\mathfrak{h})=0,\quad \Lambda(c)=\delta(d)=1,\quad \Lambda(d)=\delta(c)=0.$$

There is a natural embedding $\mathcal{H}\hookrightarrow\gotg\otimes\QQ(t_1,t_2)$ under which
$\mathcal{F}_{\An}$ admits the following description.  Consider the irreducible representation
$V_{\Lambda}$ of $\gotg$ with highest weight $\Lambda$.  There is a graded isomorphism of $\mathcal{H}$-modules
$$\mathcal{F}_{\An}\otimes\QQ(t_1,t_2) = \bigoplus_{m\geq0} V_\Lambda[\Lambda - m\delta]\otimes\QQ(t_1,t_2),$$ where $V_{\Lambda}[\Lambda-m\delta]$ denotes the weight space for the weight $\Lambda-m\delta$.

This identification allows us to construct operators on $\mathcal{F}_{\An}$.  Given any element 
of the universal enveloping algebra $U(\gotg)$ which commutes with the Cartan subalgebra, the 
associated operator on $V_{\Lambda}$ will preserve the individual weight spaces.   In particular, if we let $e_{ij} \in \mathfrak{gl}(n+1)$ denote the matrix with a $1$ at the entry $(i,j)$ and $0$ everywhere else, then the expression
$$e_{ij}(k)e_{ji}(-k)$$
is a well-defined operator on Fock space.

\subsection{Operator formula}

We define an operator $\Theta^{\DT}(q,s_{1},\dots,s_{n})$ on Fock space using the rubber partition function defined earlier.  If $\overrightarrow{\mu},\overrightarrow{\nu}$ are partitions of $m$, we define
$$\langle \overrightarrow{\mu}| \Theta^{\DT}|\overrightarrow{\nu}\rangle
= q^{-m}\langle \overrightarrow{\mu}, \overrightarrow{\nu}\rangle^{\DT,\sim}_{+}.$$
Again, the brackets on the left-hand side denote the inner product.
We have not included a degree $0$ normalization term.

Also, consider the operator-valued function
of $q,s_{1},\dots,s_{n}$ defined using the affine algebra action:
\begin{gather*}
\Omega_{+}:=\sum_{1\le i<j\le n+1}\sum_{k\in\ZZ} :e_{ji}(k)
e_{ij}(-k):\log(1-(-q)^k s_i\dots s_{j-1})
\end{gather*}
In this expression, we use the normal ordering shorthand where
$$
:e_{ji}(k) e_{ij}(-k):=\left\{ \begin{aligned}
e_{ji}(k)e_{ij}(-k),& \quad k<0 \textrm{ or } k=0, i<j \\
 e_{ij}(-k)e_{ji}(k),& \quad\textrm{otherwise}.
\end{aligned}
\right.
$$
Moreover, we expand the logarithms so the Taylor expansion has
nonnegative exponents in the $s$ variables.

We have the following evaluation.
\begin{theorem}\label{dt theorem}
$$\Theta^{\DT}(q,s_{1},\dots,s_{n}) = (t_1+t_2)\cdot(\Omega_{+}(q,s_{1},\dots,s_{n}) + 
\sum_{1\le i< j \le n+1} F(q, s_{i}\dots s_{j-1})\cdot \mathrm{Id}),$$
where
$$F(q,s) = \sum_{k \geq 0}(k+1)\log(1 - (-q)^{k+1}s).$$
\end{theorem}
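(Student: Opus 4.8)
The plan is to determine the operator $\Theta^{\DT}$ by the same reconstruction strategy used in \cite{hilban} for the quantum cohomology side: prove a short list of partial evaluations of the rubber series $\langle\overrightarrow{\mu},\overrightarrow{\nu}\rangle^{\DT,\sim}_{+}$ that suffice to pin down the operator uniquely (up to the terms we do not normalize). Concretely, I would first isolate the one-leg and divisor specializations, where $\overrightarrow{\nu}$ (or $\overrightarrow{\mu}$) is the identity $(1,1)^m$ or a divisor class $(2,1)(1,1)^{m-2}$, $(1,\omega_i)(1,1)^{m-1}$, and match them against the corresponding known data on the Hilbert scheme side from \cite{hilban}, using the isomorphism $\mathcal{F}_{\An}\otimes\QQ(t_1,t_2)=\bigoplus_m V_\Lambda[\Lambda-m\delta]\otimes\QQ(t_1,t_2)$ to express everything in terms of the $\gotg$-action. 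The guiding principle, explained in the Overview, is the geometric comparison between ideal sheaves on $\An\times\mathbf{P}^1$ concentrated near the relative fibers and rational curves on $\Hb(\An)$; the holomorphic symplectic form gives $(t_1+t_2)$-divisibility, which is why the answer has the overall factor $(t_1+t_2)$ and why the comparison with \cite{hilban} is a matter of leading-order behavior in $t_1+t_2$.

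The main computational input is a degeneration/localization analysis of the rubber moduli space $I_\chi(X,(\beta,m))^{\sim}$. I would degenerate the $\mathbf{P}^1$ base and use the rigidification procedure of \cite{okpandt, okpanhilb} to reduce rubber integrals to rigid relative invariants with an extra insertion, then localize with respect to $T$. The $T$-fixed loci decompose according to how the curve class $\beta=\alpha_{ij}$ distributes among the $T$-fixed curves $E_i,\dots,E_{j-1}$ and how the points $m$ sit over the fixed points $p_1,\dots,p_{n+1}$; in the fixed-point (multipartition) basis $J_{\overrightarrow{\rho}}$ this is exactly the bookkeeping that produces the operators $e_{ji}(k)e_{ij}(-k)$, with the index $(i,j)$ tracking the root $\alpha_{ij}$ and $k$ tracking the Euler characteristic / winding along $\mathbf{P}^1$. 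The normal-ordering prescription comes from the two possible orientations of the rubber (which boundary the curve degenerates toward), and the $\log(1-(-q)^ks_i\dots s_{j-1})$ factors are the generating series of the $\ZZ_{\ne0}$-summation over $k$ after the standard $\An$-surface vertex computation; the diagonal correction $F(q,s)$ collects the $\overrightarrow{\mu}=\overrightarrow{\nu}$ contributions that are invisible to the off-diagonal matrix elements and must be fixed separately, e.g. by comparison with the $n=0$ case (the $\CC^2$/resolved conifold-type computation of \cite{okpandt}) or by a direct evaluation of $\langle(1,1)^m|\Theta^{\DT}|(1,1)^m\rangle$.

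Given these partial evaluations, the final step is the reconstruction argument: one shows that an operator on $\mathcal{F}_{\An}$ commuting with the Cartan of $\gotg$, constrained to be a sum of $q$- and $s$-series multiples of the basic quadratic operators $e_{ji}(k)e_{ij}(-k)$ plus a scalar, is determined by its matrix elements against the identity and divisor classes — this is precisely where the hypothesis on $\overrightarrow{\rho}$ in Theorem \ref{gw dt theorem} enters, and where the nondegeneracy conjecture of section \ref{generation conjecture} is needed to upgrade to arbitrary $\overrightarrow{\rho}$ (Theorem* \ref{gw dt with conjecture}). Matching the leading coefficients against \cite{hilban} then forces the operator $\Omega_+$ with exactly the stated logarithmic coefficients, and the scalar term against the degree-$0$ and diagonal data forces $F(q,s)$.

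The hardest part will be the rubber localization/rigidification step: controlling the virtual normal bundle contributions over the noncompact $T$-fixed locus, verifying that the residue integrals assemble into precisely the normal-ordered quadratic expression $\Omega_+$ (including the correct sign $(-q)^k$ and the cutoff between the $k<0$ and $k>0$ ranges), and separating the genuinely new diagonal piece $F(q,s)$ from the off-diagonal operator. Much of this parallels \cite{okpandt, hilban}, so I expect the argument to be largely schematic, citing those papers for the parts that transfer verbatim and focusing the new work on the $\An$-specific combinatorics of how $\alpha_{ij}$ and the fixed-point distribution interact.
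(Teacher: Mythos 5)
Your skeleton (a short list of partial evaluations plus a reconstruction/uniqueness theorem imported from \cite{hilban}, with the scalar term fixed separately) is indeed the paper's strategy, but the uniqueness step as you set it up has a genuine gap. You propose to determine $\Theta^{\DT}$ from its matrix elements against the identity and divisor classes, under the ansatz that the operator is a priori a $q,s$-series in the quadratic elements $e_{ji}(k)e_{ij}(-k)$ plus a scalar. That ansatz is essentially the content of Theorem \ref{dt theorem} itself and has no a priori geometric justification: nothing about the rubber moduli space constrains $\Theta^{\DT}$ to lie in that span, and without the ansatz the data you list does not pin down an arbitrary graded operator on $\mathcal{F}_{\An}$. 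The reconstruction actually used (Proposition \ref{reconstruction}, proved in \cite{hilban}) characterizes $(t_1+t_2)\Omega_{+}$ among \emph{all} graded operators by a different package of properties: factorization of parts labelled by $1$ (Proposition \ref{factorization}); the statement that divisor-labelled matrix elements are rational multiples of $t_1+t_2$ (Proposition \ref{dimension}), so that only information mod $(t_1+t_2)^2$ is needed; off-diagonal vanishing mod $(t_1+t_2)^2$ in the $T$-fixed-point basis (Proposition \ref{vanishing}); two explicit fixed-point evaluations (Proposition \ref{exactevaluation}); and the vacuum expectation. Establishing these requires the divisibility statement of Proposition \ref{divisibility} (proved by a direct analysis of the multiplicity of $t_1+t_2$ in the equivariant vertex, since reduced-class foundations are unavailable in DT theory), the skewer/twistor decomposition of the rubber $T$-fixed loci, and Lemma \ref{hilbdtcompare} identifying the DT and Hilbert-scheme obstruction theories on the locus of chains of rational curves; none of this appears in your outline beyond a general appeal to \cite{okpandt}. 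Relatedly, the generation conjecture of section \ref{generation conjecture} plays no role in Theorem \ref{dt theorem}: it enters only later, to pass from divisor operators to arbitrary three-point insertions, so placing it inside the reconstruction of $\Theta^{\DT}$ misidentifies where the hypothesis on $\overrightarrow{\rho}$ is used.

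A second concrete gap is the scalar term. Your suggested shortcut of comparing with the $n=0$ case cannot work: $F(q,s_{i}\cdots s_{j-1})$ is supported on nonzero compact curve classes $d\alpha_{ij}$ of $\An$, which have no counterpart for $\CC^{2}$, and the paper stresses that the vacuum expectation is the one computation that does not reduce to the Hilbert scheme or to \cite{okpandt}. It is obtained by a direct computation: after rigidifying with $\sigma_{0}(\delta_{0})$ and localizing with respect to $T\times\CC^{\ast}$, only the minimal box configurations $H^{d}_{a,b}$ contribute mod $(t_1+t_2)^2$, and summing their explicit weights (evaluated at $t_3=0$) yields $F$. Your proposal correctly flags that a direct evaluation may be needed, but supplies no method for it; as written, both the uniqueness input and the diagonal evaluation are asserted rather than proved.
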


In section \ref{proof of the main theorems}, we will explain how this yields the partition functions that occur in theorem \ref{gw dt theorem}.
The proof of Theorem \ref{dt theorem} will be given after first establishing some preliminary results.

\section{Geometric preliminaries}\label{geometry}

In this section, we prove some basic divisibility statements for the equivariant weights associated
to localization and study the geometry of $T$-fixed points of the rubber moduli spaces.  The main result of this section will be the result that the $T$-equivariant DT invariants of $\An\times\mathbf{P}^1$
with nonzero curve class $\beta$
 - absolute, relative, and rubber - have positive valuation with respect to $(t_1+t_2)$.
 
 For the Gromov-Witten theory of $\An\times\mathbf{P}^1$ and the
 quantum cohomology of $\Hb(\An)$, the analog of this statement can be proven
 using the construction of a reduced virtual class from the holomorphic symplectic form on $\An$ (since such a form has weight $-(t_1+t_2)$).  The foundations of this approach in DT theory have yet to appear, so we make a direct but tedious analysis of the localization of the virtual class, following the analogous argument in \cite{okpandt}.
 
\subsection{Descendent insertions}

We recall the definition of descendent insertions in DT-theory.  
For absolute invariants, the universal ideal sheaf
$$\mathcal{I} \rightarrow I_{\chi}(X, (\beta,m)) \times X$$
has a finite $T$-equivariant resolution by locally free sheaves and therefore admits
well-defined $T$-equivariant Chern classes.

For $\gamma\in H_T^l(X,\QQ)$, let $ch_{k+2}(\gamma)$ denote
the following operation on the homology of $I_\chi(X, (\beta,m))$:
\begin{gather*}
ch_{k+2}(\gamma): H^T_*(I_n(X,(\beta,m)),\QQ)\to
H^T_{*-2k+2-l}(I_n(X,(\beta,m)),\QQ),\\
ch_{k+2}(\gamma)(\zeta)=\pi_{1*}(ch_{k+2}(\mathcal{I})\cdot\pi_2^*(\gamma)
\cap\pi_1^*(\zeta)).
\end{gather*}

Given cohomology-weighted partitions of $m$, $\overrightarrow{\nu_1},\dots, \overrightarrow{\nu_b}$, we define relative descendent invariants by the residue integral
\begin{equation*}
\langle\sigma_{k_1}(\gamma_{l_1})\dots\sigma_{k_a}(\gamma_{l_a})
|\overrightarrow{\nu_1},\dots,\overrightarrow{\nu_b}\rangle^{\DT}_{(\beta,m),\chi}:=
\int_{[I_\chi(X/S,(\beta,m))^T]^{vir}}\frac{\prod_{i=1}^a
ch_{k_i+2}(\gamma_{l_i})}{e(Norm^{vir})}\prod_j^b\epsilon_{j}^{\ast}(\overrightarrow{\nu_j}),
\end{equation*}
where
\begin{equation*}
\frac{\prod_{i=1}^a
ch_{k_i+2}(\gamma_{l_i})}{e(Norm^{vir})}:=ch_{k_1+2}(\gamma_{l_1})\circ\dots\circ
ch_{k_a+2}(\gamma_{l_a})\left(\frac{[I_\chi(X,(\beta,m))]^{vir}}{e(Norm^{vir})}\prod_j^b\epsilon_{j}^{\ast}
(\overrightarrow{\nu_j})\right).
\end{equation*}

If the number of relative fibers is nonempty, we can suppress the $m$ in our notation above.
We again assemble the invariants in the generating series, following similar bracket conventions to section~\ref{rubber geometry}:
\begin{align*}
\langle\sigma_{k_1}(\gamma_{l_1})\dots\sigma_{k_a}(\gamma_{l_a})
|\overrightarrow{\nu_1},\dots,\overrightarrow{\nu_b}\rangle^{\DT}_{\beta}&=
\sum_{\chi} q^{\chi}\langle\sigma_{k_1}(\gamma_{l_1})\dots\sigma_{k_a}(\gamma_{l_a})
|\overrightarrow{\nu_1},\dots,\overrightarrow{\nu_b}\rangle^{\DT}_{\beta,\chi}\\
\langle\sigma_{k_1}(\gamma_{l_1})\dots\sigma_{k_a}(\gamma_{l_a})
|\overrightarrow{\nu_1},\dots,\overrightarrow{\nu_b}\rangle^{\DT}&= 
\sum_{\beta} s^{\beta} \langle\sigma_{k_1}(\gamma_{l_1})\dots\sigma_{k_a}(\gamma_{l_a})
|\overrightarrow{\nu_1},\dots,\overrightarrow{\nu_b}\rangle^{\DT}_{\beta}\\
\langle\sigma_{k_1}(\gamma_{l_1})\dots\sigma_{k_a}(\gamma_{l_a})
|\overrightarrow{\nu_1},\dots,\overrightarrow{\nu_b}\rangle^{\DT,\prime}_{\beta}&=
\frac{\langle\sigma_{k_1}(\gamma_{l_1})\dots\sigma_{k_a}(\gamma_{l_a})
|\overrightarrow{\nu_1},\dots,\overrightarrow{\nu_b}\rangle^{\DT}_{\beta}}
{\mathsf{Z}_{DT}(X)_{(0,0), \emptyset, \dots, \emptyset}}.
\end{align*}

We show that, for $\beta \ne 0$, these invariants vanish $\mod (t_1+t_2)$:
\begin{prop}\label{divisibility} The invariants
$$\langle \sigma_{k_1}(\gamma_{l_1}),\dots,
\sigma_{k_r}(\gamma_{l_a})|\overrightarrow{\nu_1}, \dots,\overrightarrow{\nu_b}\rangle^{\DT}_{\beta,\chi}
\in \QQ(t_1,t_2)$$
have positive valuation with respect to $t_1+t_2$.  The analogous result for rubber invariants
is also true.
\end{prop}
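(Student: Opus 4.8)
The plan is to analyze the $T$-fixed loci of $I_\chi(X/S,(\beta,m))$ via localization and show that, when $\beta\ne 0$, every fixed-point contribution to the residue integral is divisible by $t_1+t_2$ in the appropriate sense. The underlying mechanism is the one pioneered in \cite{okpandt} for $\CC^2\times\mathbf P^1$: the surface $\An$ carries a holomorphic symplectic form of $T$-weight $-(t_1+t_2)$, and this forces a cancellation structure on the obstruction theory along the $\mathbf P^1$ direction. First I would describe the $T$-fixed locus of $I_\chi(X,(\beta,m))$ concretely: since $T$ acts only on the $\An$ factor, a $T$-fixed ideal sheaf is supported on (unions of) the $T$-fixed curves $E_i\times\mathbf P^1$ together with embedded/floating points, and the combinatorics is governed by the weights $w^L_i,w^R_i$ recorded earlier together with partition data along each $E_i\times\mathbf P^1$ and box configurations at the fixed points $p_i\times\mathbf P^1$. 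The relative/rubber versions add degeneration bubbles over the marked points $z_i$, but the fixed loci of the bubbles are themselves products with $\mathbf P^1$ of the same local models, so the same analysis applies component by component via the degeneration formula.

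Next I would set up the bookkeeping for the virtual tangent space. At a $T$-fixed ideal sheaf $I$, the virtual tangent space is $\operatorname{Ext}^1(I,I)_0 - \operatorname{Ext}^2(I,I)_0$, and the residue integrand is the ratio of the nonzero-weight parts, i.e.\ $e(\operatorname{Ext}^2_{\mathrm{mov}})/e(\operatorname{Ext}^1_{\mathrm{mov}})$ (together with the relative/rubber modifications). The key point is to exhibit, for $\beta\ne 0$, a pairing on the moving weights under which weights come in pairs $w$ and $-(t_1+t_2)-w$: concretely, Serre duality on $X$ together with the fact that $K_X = K_{\An}\boxtimes K_{\mathbf P^1}$ and that $K_{\An}$ is $T$-equivariantly trivial up to the weight $-(t_1+t_2)$ gives an identification $\operatorname{Ext}^i(I,I)_0 \cong \operatorname{Ext}^{3-i}(I,I)_0^\vee \otimes (t_1+t_2)$. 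Restricting this to the parts with support meeting the curve class $\beta$, one gets that the numerator and denominator of the localization contribution, after collecting weights into such dual pairs, produce a net factor that is visibly divisible by $t_1+t_2$ — because each dual pair $\{w,\,-(t_1+t_2)-w\}$ either contributes a factor $w\cdot(-(t_1+t_2)-w)$ or, in the ``self-dual'' degenerate case $2w=-(t_1+t_2)$, contributes a factor proportional to $(t_1+t_2)$ directly. One must check that the surviving contributions (fixed part versus moving part) are balanced so that a strictly positive power of $t_1+t_2$ remains precisely when $\beta\ne 0$; this is where nonvanishing of $\beta$ enters, since for $\beta = 0$ the configuration is purely a product of punctual Hilbert-scheme data and the symplectic pairing is internal to each factor and cancels. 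I would follow the argument of \cite{okpandt} closely here, since the combinatorics of $A_n$ weights differs from the $\CC^2$ case only in the finite set of local models at the $p_i$.

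For the descendent insertions $ch_{k_i+2}(\gamma_{l_i})$ I would note that these are tautological classes built from the universal sheaf $\mathcal I$, and that their restriction to a $T$-fixed locus is a polynomial in the same equivariant weights; multiplying the (already $(t_1+t_2)$-divisible) localization contribution by such a class cannot decrease the $(t_1+t_2)$-valuation, so the descendent statement follows from the primary one. Finally, for the rubber invariants one repeats the argument with the rubber obstruction theory of rank $2m-1$: the extra $\CC^\ast$-rigidification removes one trivial factor, but the Serre-duality pairing on the moving weights is unaffected since it does not involve the $\mathbf P^1$-scaling direction, so the same dual-pairing argument gives positive $(t_1+t_2)$-valuation. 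The main obstacle I anticipate is making the ``self-dual pair'' analysis airtight: one must verify that weights $w$ with $2w = -(t_1+t_2)$ occur with the right parity between $\operatorname{Ext}^1$ and $\operatorname{Ext}^2$ so that they genuinely contribute positively rather than cancelling, and that across all fixed loci (including the degenerate bubble components of the relative/rubber spaces) no contribution accidentally has valuation zero. This is a somewhat delicate sign-and-parity count, but it is a direct, if tedious, adaptation of the $\CC^2$ computation, which is exactly the strategy the authors signal in the overview.
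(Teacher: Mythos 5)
Your proposal has a genuine gap, and it sits exactly where the real work of the paper lies. First, the Serre-duality pairing you invoke does not exist in the form you state: $K_X=K_{\An}\boxtimes K_{\mathbf{P}^1}$ with $K_{\mathbf{P}^1}=\mathcal{O}(-2)$ nontrivial, so duality relates $\operatorname{Ext}^2(I,I)_0$ to $\operatorname{Ext}^1(I,I\otimes K_X)_0$, not to $\operatorname{Ext}^1(I,I)_0$ twisted by the character $t_1+t_2$; moreover $X$ is noncompact, so even that statement needs compact-support care. Hence the clean matching of moving weights into pairs $\{w,\,-(t_1+t_2)-w\}$ is not available. The honest "soft" route would be a reduced virtual class (cosection) built from the holomorphic symplectic form on $\An$, and the paper explicitly notes that the DT foundations for this do not exist — which is precisely why it resorts to a direct analysis of the localized virtual class rather than any duality argument. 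The piece of your proposal that can be salvaged is the combinatorial one: for the \emph{absolute} theory, localizing with respect to the full $(\CC^{\ast})^3$ gives isolated $\An$-box configurations, and the paper computes the multiplicity of $t_1+t_2$ in each weight via the rank function $\mathrm{rk}_{t_3}$, showing it is at least one when $\beta\ne 0$ (two vertices with a single infinite leg along $\An$ each contribute $1/2$). Note that the two-torus $T$ alone does not have isolated fixed points, since it acts trivially along $\mathbf{P}^1$.

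Second, your claim that the relative and rubber cases follow "component by component" from the same local analysis is exactly the step that fails. The $T$-fixed loci of the relative and rubber moduli spaces are complicated and non-isolated (the skewer/twistor components), and the paper does not prove divisibility fixed-locus by fixed-locus there. Instead it runs a double induction on the discrete invariants $(\beta,\chi)$: the degeneration formula trades relative conditions for point-descendent insertions against fixed-point relative conditions $[J_{\overrightarrow{\rho}}]$; the matrix $\mathsf{M}_0$ of such invariants at $q=s_i=0$ is shown to be invertible, which lets one deduce divisibility of relative invariants from the absolute statement plus the inductive hypothesis; rubber invariants are then reduced to relative ones by rigidification and $\psi$-removal; and the reverse implication (Lemma on $\langle\sigma(\overrightarrow{\mu})|[J_{\overrightarrow{\rho}}]\rangle$) is proved by $T\times\CC^{\ast}$-localization, with twistor contributions controlled by the identification of the DT and Gromov--Witten obstruction theories on the locus of chain-domain stable maps to $\Hb_m(\An)$ and the divisibility results imported from the Hilbert-scheme paper, followed by a matrix identity forcing the remaining diagonal skewer matrix to vanish mod $t_1+t_2$. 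Without this inductive/degeneration machinery (or a genuine reduced-class construction), your argument does not establish the proposition for relative or rubber invariants.
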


The case
where $\beta = 0$ is handled by the results of \cite{okpandt} using a factorization result for
relative DT invariants.

\begin{lemma}
The invariants 
$$\langle \sigma_{k_1}(\gamma_{l_1}),\dots,
\sigma_{k_r}(\gamma_{l_a})|\overrightarrow{\nu_1}, \dots,\overrightarrow{\nu_b}\rangle^{\DT}_{\beta = 0,\chi}$$
vanish $\mod (t_1+t_2)$ for $\chi > m$.
\end{lemma}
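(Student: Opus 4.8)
The plan is to reduce the statement to a localization computation on the product space and then invoke the known degree-zero factorization. First I would recall from \cite{okpandt} (and the analog for $\An$ via \cite{mnop2}) that for $\beta = 0$ the relative DT invariants of $X/S$ factor: the fixed loci contributing are supported on subschemes whose one-dimensional component is a union of $T$-fixed fibers $\An \times z$ together with points, and the contribution of such a fixed locus splits as a product of a "vertex" contribution along the fibers and the degree-zero (point) contribution away from them. Concretely, after stripping off the MacMahon factor $\mathsf{Z}_{DT}(X)_{(0,0),\emptyset,\dots,\emptyset}$, the normalized series in the $\beta=0$ sector is built entirely out of the local contributions of $T$-fixed points $p_i \in \An$, each of which looks like the relative DT theory of $\CC^2 \times \mathbf{P}^1$ with tangent weights $w^L_i, w^R_i$ at that point.

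Next I would examine the Euler characteristic bound. A subscheme $Z \subset X$ with $\beta = 0$ and $[Z] = m[\mathbf{P}^1\text{-fiber class}] $... wait, $\beta=0$ forces the curve class to be $(0,m)$, i.e. $Z$ has one-dimensional part consisting of $m$ copies (with multiplicity) of $\mathbf{P}^1$-fibers over points of $\An$, since those are the only curves in class $(0,m)$. For such a curve, a pure structure sheaf supported on $m$ disjoint fibers has $\chi = m$ (each $\mathbf{P}^1$ fiber contributes $1$); embedded points and thickenings only increase $\chi$. Hence $\chi > m$ means the subscheme carries genuine extra zero-dimensional decoration, i.e. strictly positive length of embedded/floating points. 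I would then argue that each such extra point of length $\ell$ is captured by a Hilbert-scheme-of-points factor $\Hb_\ell(\CC^2)$ localized at the $T$-fixed monomial ideals, and the corresponding contribution to the residue integral is a product over boxes of the monomial ideal of weights of the form $a\, w^L_i + b\, w^R_i$ with $(a,b)$ in the arm/leg hook data; crucially none of these is divisible by $w^L_i + w^R_i$, but the \emph{point class} $[pt] \in H^*_T(\CC^2)$ that one must integrate carries a factor $w^L_i w^R_i$, and more to the point the degree-zero vertex of MNOP has the well-known $(t_1+t_2)$-divisibility after normalization. The cleanest route: the normalized local degree-zero contribution of a box configuration is, up to the universal MacMahon prefactor, a rational function whose numerator is the equivariant Euler class of the tangent space to $\Hb_\ell(\CC^2)$ minus its obstruction — and the virtual tangent space of the degree-zero DT moduli is $T_{\Hb} - \omega_{\Hb}^{\vee}\otimes(\text{canonical})$, whose Euler class vanishes mod $(t_1+t_2)$ precisely because the holomorphic symplectic form pairs $T_{\Hb}$ with its dual shifted by $t_1+t_2$. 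I would cite the explicit computation in \cite[Section~?]{okpandt} rather than redo it.

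So the key steps in order: (1) identify the $T$-fixed loci in $I_\chi(X/S,(0,m))$ as products of $T$-fixed configurations at the points $p_1,\dots,p_{n+1}$ of $\An$, each a monomial-ideal configuration on $\CC^2\times\mathbf{P}^1$ relative to fibers; (2) observe that $\chi > m$ forces at least one such local configuration to have a nonzero zero-dimensional part; (3) invoke the degree-zero factorization of \cite{okpandt, mnop2} to write the normalized contribution of that locus as a product of the pure $m$-fiber contribution (which is what is being subtracted off, roughly) times a genuinely degree-zero point contribution with positive length; (4) apply the $(t_1+t_2)$-divisibility of the normalized degree-zero DT vertex from \cite{okpandt}, which comes from the holomorphic-symplectic reduced-class mechanism on $\Hb(\CC^2)$; (5) conclude the whole residue integral is divisible by $(t_1+t_2)$, i.e. has positive valuation.

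I expect the main obstacle to be step (3): making the factorization statement precise in the \emph{relative, equivariant, residue} setting, since one must track how the relative boundary maps $\epsilon_j$ and the cohomology weights $\overrightarrow{\nu_j}$ interact with the splitting of the fixed locus, and ensure that pulling back $\overrightarrow{\nu_j}$ along $\epsilon_j$ does not destroy the divisibility (it does not, because $\overrightarrow{\nu_j}$ lies in $H^*_T(\Hb_m(\An))$ defined over $R = \QQ[t_1,t_2]_{(t_1+t_2)}$, hence has valuation $\geq 0$). A secondary technical point is handling the case where the zero-dimensional part is distributed partly as embedded points on the fibers versus floating points, but both are absorbed into the same local $\Hb_\ell(\CC^2)$ analysis. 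Once the factorization is in hand the divisibility is immediate from the cited $\CC^2$ result, so the argument is essentially bookkeeping on top of \cite{okpandt}.
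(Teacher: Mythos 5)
Your proposal is correct and matches the paper's (very terse) argument: the paper disposes of the $\beta=0$ case exactly by the factorization of the relative invariants into contributions of the toric charts $\CC^2_i$ at the fixed points $p_i$ of $\An$, followed by citing the $(t_1+t_2)$-divisibility results for $\CC^2\times\mathbf{P}^1$ from \cite{okpandt}, which is precisely your steps (1)--(5). The only small imprecision is the aside that ``thickenings only increase $\chi$'' (pullback thickenings keep $\chi=m$; it is the non-pullback structure and embedded/floating points that force $\chi>m$), but this does not affect the argument since the excess-$\chi$ divisibility is in any case delegated to \cite{okpandt}.
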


%\subsection{Rigidification}

%In this section, we state a rigidification lemma that allows us to write %rubber invariants in terms of non-rubber relative invariants with descendent %insertions.  These techniques are collectively called rubber calculus %\cite{tvgw},\cite{okpandt} and in the case of DT theory, the details are %provided in \cite{okpandt}.

%In what follows, we let 
%$$\delta_{0} = \iota_{*}(\omega_{1}+ \dots \omega_{n}) \in %H^{\ast}_{T}(\An\times\mathbf{P}^1,\QQ),$$
%where $\iota$ is the inclusion of a fiber $\An \hookrightarrow X$.  The %significance of this insertion is that
%$(\omega_{1}+\dots+\omega_{n})\cdot \beta \ne 0$ if $\beta \ne 0$.

%\begin{lemma}
%$$\langle \overrightarrow{\mu}| \sigma_{0}(\delta_{0})| %\overrightarrow{\nu}\rangle^{\DT}
%= \left(\sum s_{k}\frac{\partial}{\partial s_{k}} \right) \langle %\overrightarrow{\mu}, \overrightarrow{\nu}\rangle^{\DT,\sim}$$
%\end{lemma}
%\begin{proof}
%ADD PROOF LATER
%\end{proof}

\subsection{Rigidification}

In this section, we state a rigidification lemma that allows us to write rubber invariants in terms of non-rubber relative invariants with descendent insertions.  These techniques in Gromov-Witten theory are collectively called rubber calculus \cite{tvgw} and, in the case of DT theory, the details are provided in
section $4.9$ of \cite{okpandt}.

In what follows, we let
$$\delta_{0} = \iota_{*}(\omega_{1}+ \dots \omega_{n}) \in H^{\ast}_{T}(\An\times\mathbf{P}^1,\QQ),$$
where $\iota$ is the inclusion of a fiber $\An \hookrightarrow X$.  The significance of this insertion is that for an effective $\beta\ne 0$ we have
$(\omega_{1}+\dots+\omega_{n})\cdot \beta \ne 0$.

\begin{lemma}
Given a divisor $\omega\in H^{2}_{T}(\An,\QQ)$, we have
$$
\langle \overrightarrow{\mu} | \sigma_{0}(\iota_{*}\omega)| \overrightarrow{\nu}\rangle^{\DT}_{\beta}=
(\omega\cdot\beta)\langle \overrightarrow{\mu}, \overrightarrow{\nu}\rangle^{\DT,\sim}_{\beta}.
$$
In particular, we can rigidify by $\delta_0$ to give the identity:
$$\langle \overrightarrow{\mu}| \sigma_{0}(\delta_{0})| \overrightarrow{\nu}\rangle^{\DT}
= \left(\sum s_{k}\frac{\partial}{\partial s_{k}} \right) \langle \overrightarrow{\mu}, \overrightarrow{\nu}\rangle^{\DT,\sim}.$$
\end{lemma}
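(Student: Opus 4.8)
The plan is to reduce the rubber invariant to a non-rigid relative invariant by attaching a descendent insertion supported on a fiber, using the standard rubber calculus of \cite{tvgw, okpandt}. First I would recall the geometric mechanism: the rubber moduli space $I_\chi(X,(\beta,m))^\sim$ arises as (a component of) the boundary of the relative moduli space $I_\chi(X/S,(\beta,m))$ when $S$ is a single fiber, and more precisely the target $\mathbf{P}^1$ can degenerate so that a rubber component bubbles off. The operator $\sigma_0(\iota_*\omega)$ inserts the class $ch_2(\mathcal{I})\cdot \pi_2^*(\iota_*\omega)$; since $\iota_*\omega$ is the pushforward of a divisor class from a fiber $\An \times z$, its restriction to that fiber is $\omega$, and $ch_2$ of the universal ideal sheaf restricted to the fiber records the intersection of the relative subscheme with the divisor, which is $\omega \cdot \beta$ up to the rubber correction. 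The upshot, exactly as in section $4.9$ of \cite{okpandt}, is a rigidification identity: pairing against $\sigma_0(\iota_*\omega)$ on the relative (non-rubber) side equals multiplication by the divisor intersection number $(\omega\cdot\beta)$ on the rubber side. I would set this up by the topological recursion/degeneration argument: degenerate $\mathbf{P}^1$, use the degeneration formula to split into a rubber piece carrying $\overrightarrow\mu$, $\overrightarrow\nu$ and a rigid piece carrying the descendent, and observe the rigid piece with $\sigma_0$ of a fiber class contributes only the trivial (constant) term whose coefficient is $(\omega\cdot\beta)$; all other terms vanish by dimension/degree reasons since $\sigma_0(\iota_*\omega)$ has cohomological degree too low to absorb extra boundary divisors. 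This gives the first displayed identity.

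For the second identity I would simply specialize $\omega = \omega_1 + \dots + \omega_n$, set $\delta_0 = \iota_*(\omega_1+\dots+\omega_n)$, and sum over $\beta$. The point is that for an effective curve class $\beta = \sum a_i E_i$ with $a_i \geq 0$ not all zero, we have $(\omega_1 + \dots + \omega_n)\cdot\beta = \sum_i (\omega_1 + \dots + \omega_n)\cdot E_i \cdot a_i = \sum_i a_i$, using $\langle \omega_j, E_i\rangle = \delta_{ij}$; but $\sum_i a_i$ is exactly the total power of the $s$-variables appearing in the monomial $s^\beta = \prod s_i^{(\beta,\omega_i)} = \prod s_i^{a_i}$. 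Hence the first identity reads, coefficient by coefficient in $s^\beta$, as multiplication by the total $s$-degree, which is precisely the action of the Euler operator $\sum_k s_k \partial/\partial s_k$. Summing the generating series over $\beta$ then yields $\langle\overrightarrow\mu|\sigma_0(\delta_0)|\overrightarrow\nu\rangle^{\DT} = (\sum_k s_k \partial_{s_k})\langle\overrightarrow\mu,\overrightarrow\nu\rangle^{\DT,\sim}$; here the $\beta = 0$ term contributes zero on both sides (no divisor to intersect, and the Euler operator kills constants), so there is no discrepancy from the degree-zero part.

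The main obstacle, and the step requiring genuine care rather than bookkeeping, is the first one: justifying that the rigidification of $\sigma_0(\iota_*\omega)$ produces exactly the factor $(\omega\cdot\beta)$ with no correction terms. This is where one must be careful about the equivariant perfect obstruction theory on the rubber space (rank $2m-1$ versus $2m$), the behavior of the virtual normal bundle under the boundary degeneration, and the fact that the universal ideal sheaf does not literally restrict as a flat family along the bubbling locus — one needs the compatibility of $ch_2$ with the degeneration, which is precisely the content invoked from section $4.9$ of \cite{okpandt}. I expect to handle this by citing that argument verbatim, since the $\An \times \mathbf{P}^1$ geometry differs from the $\CC^2 \times \mathbf{P}^1$ case only in the surface factor, which plays no role in the $\mathbf{P}^1$-direction rubber calculus; the only new input is the elementary computation $(\omega_1+\dots+\omega_n)\cdot\beta \neq 0$ for effective $\beta \neq 0$, which guarantees that $\delta_0$ is a genuine rigidifier (the multiplier never vanishes on the relevant curve classes) and hence that the second identity can be inverted to recover rubber invariants from relative ones.
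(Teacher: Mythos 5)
Your reduction of the second identity to the first is fine and matches the paper: $(\omega_1+\cdots+\omega_n)\cdot\beta=\sum_i(\beta,\omega_i)$ is the total $s$-degree of $s^\beta$, so summing over $\beta$ turns the multiplier into the Euler operator $\sum_k s_k\partial/\partial s_k$. The problem is the mechanism you propose for the first identity. The degeneration formula does not produce a rubber factor: degenerating the base $\mathbf{P}^1$ writes $\langle\overrightarrow{\mu}|\sigma_0(\iota_*\omega)|\overrightarrow{\nu}\rangle^{\DT}_{\beta}$ as a sum of products of \emph{rigid} relative invariants of the two components glued along the common relative fiber, and no term of the form $\langle\overrightarrow{\mu},\overrightarrow{\nu}\rangle^{\DT,\sim}_{\beta}$ ever appears in it. Worse, your claim that the component carrying $\sigma_0(\iota_*\omega)$ ``contributes only the trivial term with coefficient $(\omega\cdot\beta)$, all other terms vanishing for dimension reasons'' is both false and circular: relative invariants with a divisor-type insertion and nonzero curve class are precisely the nontrivial quantities this paper is built to compute (they are the matrix elements of $M_D$ and $M_{(1,\omega_i)}$), and they do not vanish on degree grounds. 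So the step you flag as ``requiring genuine care'' is exactly the step your sketch does not supply.

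What the paper actually does (and what the rubber calculus of section 4.9 of \cite{okpandt} and \cite{tvgw} amounts to) is a universal-target push--pull, not a degeneration. Let $\pi:\mathcal{R}\to I_{\chi}(X,(\beta,m))^{\sim}$ be the universal target over the rubber space with universal ideal sheaf $\mathcal{I}$; interpret $\mathcal{R}$ as rubber ideal sheaves together with a target point away from the relative divisors and singular fibers, giving $f:\mathcal{R}\to\An$, and obtain a second map $\phi:\mathcal{R}\to I_{\chi}(X/S,(\beta,m))$ by demanding that the point lie over $1\in\mathbf{P}^1$. The two inputs you would need to make your argument rigorous are (i) the virtual-class compatibilities $[\mathcal{R}]^{vir}=\pi^{*}[I_{\chi}(X,(\beta,m))^{\sim}]^{vir}=\phi^{*}[I_{\chi}(X/S,(\beta,m))]^{vir}$, and (ii) the fiberwise Chern-class identity $\pi_{*}\bigl(ch_2(\mathcal{I})f^{*}(\omega)[\mathcal{R}]^{vir}\bigr)=(\omega\cdot\beta)[I_{\chi}(X,(\beta,m))^{\sim}]^{vir}$; push--pull along $\pi$ and $\phi$ then gives the first displayed identity. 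The factor $(\omega\cdot\beta)$ arises from this fiberwise $ch_2$ computation, not from any dimension-based vanishing, and this cannot be substituted by the degeneration formula in the way you describe.
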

\begin{proof}
Let $$\pi:\mathcal{R} \rightarrow I_{\chi}(X, (\beta,m))^{\sim}$$ denote the universal target over the rubber moduli space, equipped with the universal ideal sheaf $\mathcal{I}$.
We can view $\mathcal{R}$ as the moduli space of rubber ideal sheaves together with a point $r$ on the target that does not lie on either relative divisor or singular point of the target.  This induces a map $f: \mathcal{R} \rightarrow \An$.

We remove the $\CC^{\ast}$-scaling by requiring the target point $r$
 to lie over $1 \in \mathbf{P}^{1}$, yielding a map
$$\phi: \mathcal{R} \rightarrow I_{\chi}(X/S,(\beta,m)).$$
If $\mathcal{X}$ is the universal target over the non-rubber relative moduli space, $\phi$ factors by the inclusion
$$\mathcal{R} \hookrightarrow \mathcal{X} \rightarrow I_{\chi}(X/S,(\beta,m)).$$
Here, $\mathcal{R}$ is the substack of $\mathcal{X}$ for which the point on the target
lies over $1$ on $\mathbf{P}^{1}$; clearly the universal ideal sheaf on $\mathcal{X}$ restricts to $\mathcal{I}$.
There is also a virtual class on $\mathcal{R}$ which satisfies the compatibilities
$$[\mathcal{R}]^{vir} = \pi^{*}[I_{\chi}(X,(\beta,m))^{\sim}]^{vir} = \phi^{*}[I_{\chi}(X/S,(\beta,m))]^{vir}.$$

A fiber-wise calculation of Chern classes yields
$$
(\omega\cdot\beta)[I_{\chi}(X,(\beta,m))^{\sim}]^{vir} = \pi_{*}(ch_{2}(\mathcal{I})f^{\ast}(\omega)
[\mathcal{R}]^{vir}).$$  A push-pull argument with respect to $\pi$ and $\phi$ 
gives
$$\begin{aligned}
(\omega\cdot\beta)\langle \overrightarrow{\mu}, \overrightarrow{\nu}\rangle^{\DT,\sim}_{\beta} &=
\langle \overrightarrow{\mu}| ch_{2}(\mathcal{I})f^{\ast}(\omega)|\overrightarrow{\nu}\rangle^{\mathcal{R},\sim}_{\beta}\\
&= \langle \overrightarrow{\mu} | \sigma_{0}(\iota_{*}\omega)| \overrightarrow{\nu}\rangle^{\DT}_{\beta}.
\end{aligned}$$

\end{proof} 

\subsection{Equivariant measure}\label{equivariant vertex}

In the case of Proposition~\ref{divisibility} where there are no relative insertions, we can proceed by localization with respect
to the full $(\CC^{*})^3$-action on $\An\times\mathbf{P}^1$.  Let $t_3$ denote the equivariant variable
in the $\mathbf{P}^1$-direction.  The fixed loci are isolated points consisting
of configurations of boxes arranged in the moment polytope for $\An\times\mathbf{P}^1$; the contribution of each locus to the virtual fundamental class can be decomposed into equivariant factors arising from each vertex and edge of the diagram (see \cite{mnop1, mnop2} for a full description).  We will call these diagrams $\An$-box configurations.  

Again following \cite{okpandt}, we show that the equivariant term associated to each individual $\An$-box configuration will be divisible by $t_1+t_2$, under the assumption that $\beta \ne 0$.  More precisely, given an $\An$-box configuration $\pi$, consisting of (possibly infinite) $3$-dimensional
partitions $\pi_{i}$ at each fixed point, we will
give a precise expression for the multiplicity of $t_1+t_2$ as a factor of the 
associated equivariant vertex weight $\mathsf{w}(\pi)$ in terms of the following data.

Given a $3$d partition $\rho$ centered on $\CC^{3}$ with axes given by $z_1,z_2,z_3$,
let
$$\rho^{0} \supseteq \rho^{1}\supseteq \rho^{2} \dots$$
be the (possibly infinite) Young diagrams obtained by taking level sets across the $z_3$-axis.
This sequence eventually stabilizes to the finite limiting partition of $\rho$ along $z_3$.
Given a $2$-dimensional skew Young diagram $\lambda$, allowing infinite legs, we
have the function
$$\mathrm{rk}(\lambda) = \frac{1}{2} \sum_{r \in \ZZ} | c_{r}(\lambda) - c_{r+1}(\lambda)|$$
where $c_{r}(\lambda)$ is the number of boxes  $(i,j) \in \lambda$
with content $j - i = r$.  For a finite skew diagram, this function is an integer, given by
the miminal decomposition of $\lambda$ into rim hooks.  In general, the rank is not necessarily integral.
The rank with respect to $z_3$ of $\rho$ is defined by
$$\mathrm{rk}_{t_3}(\rho) = \sum_{k=0}^{\infty} \mathrm{rk}(\rho^{k}/\rho^{k+1}).$$
Since $\rho^{k}$ stabilizes, this sum is well-defined.

The following generalizes the statement of Lemma $6$ in \cite{okpandt}.
\begin{lemma}
The multiplicity of $(t_1+t_2)$ in $\mathsf{w}(\pi)$ is equal to
$$\mathrm{mult}_{t_1+t_2}(\mathsf{w}(\pi)) = \sum_{\mathrm{fixed points}} \mathrm{rk}_{t_3}(\pi_{i}).$$
\end{lemma}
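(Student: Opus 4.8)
The strategy is to localize the computation vertex-by-vertex and edge-by-edge, exactly as in the proof of Lemma 6 in \cite{okpandt}, and to track the multiplicity of $t_1+t_2$ through each local factor of the equivariant vertex weight $\mathsf{w}(\pi)$. The key algebraic input is that, after specializing $t_2 = -t_1$ (so that $t_1+t_2$ becomes the uniformizing parameter whose valuation we wish to compute), the surface $\An$ acquires a holomorphic symplectic structure and the tangent weights $w^L_i, w^R_i$ at each fixed point $p_i$ become negatives of one another. Concretely, at each fixed point $p_i$ the local model is $\CC^2 \times \mathbf{P}^1$ with $\CC^2$-weights $(w^L_i, w^R_i)$ summing to $0$ mod $t_1+t_2$, and the third axis carrying weight $\pm t_3$. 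So I would first set up the decomposition $\mathsf{w}(\pi) = \prod_{\text{vertices}} \mathsf{V}(\pi_i) \cdot \prod_{\text{edges}} \mathsf{E}(\cdots)$ from \cite{mnop1, mnop2}, reduce to showing that (a) each edge factor is a unit modulo $t_1+t_2$, and (b) each vertex factor $\mathsf{V}(\pi_i)$ has $(t_1+t_2)$-multiplicity exactly $\mathrm{rk}_{t_3}(\pi_i)$.

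For the edge factors: an edge of the $\An$-box configuration either lies in a $\mathbf{P}^1$-direction (these carry weight involving $t_3$ with finite limiting partition the empty partition, or a box configuration along $E_i \times \mathbf{P}^1$) or lies along one of the exceptional curves $E_i$. In the first case the edge weight is a product of terms $w^L_i + a\, t_1 + b\, t_2 \pm t_3$, which specialize to nonzero (since $t_3$ is a free variable), hence contribute multiplicity $0$. In the second case one uses the standard edge-term formula for $\An$ from \cite{gwan, hilban}; the relevant point is that the normal bundle weights along $E_i$ do not individually vanish mod $t_1+t_2$ because of the specific weights $w^L_i, w^R_i$ listed in the Notation subsection, so again the multiplicity is $0$. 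The bookkeeping here is tedious but mechanical — it is essentially the $\An$ analog of the $\CC^3$ edge computation in \cite{okpandt}, combined with the surface edge analysis already done in our earlier work.

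For the vertex factors, which is the heart of the matter: at a fixed point $p_i$ with $3$d partition $\pi_i$, the vertex weight $\mathsf{V}(\pi_i)$ is the equivariant $K$-theory class $\mathsf{V}_{\pi_i}(w^L_i, w^R_i, \pm t_3)$ whose alternating-sum-of-weights form is well known. After specializing $t_2 = -t_1$, the pair $(w^L_i, w^R_i)$ becomes $(u, -u)$ for some linear form $u$ in $t_1$, i.e. the "Calabi–Yau" specialization in the $z_1 z_2$-plane. The claim is then precisely the statement that the order of vanishing of $\mathsf{V}_{\rho}(u,-u,\pm t_3)$ at $t_1+t_2 = 0$ — equivalently, the number of weights of the tangent–obstruction complex that vanish identically under this specialization — equals $\mathrm{rk}_{t_3}(\rho)$, the sum over $z_3$-levels of the rim-hook rank of the successive skew diagrams $\rho^k/\rho^{k+1}$. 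I would prove this by slicing $\rho$ along the $z_3$-axis into the decreasing chain $\rho^0 \supseteq \rho^1 \supseteq \cdots$ and observing that, in the $z_1 z_2$-plane, a weight $w^L_i a + w^R_i b$ vanishes mod $t_1+t_2$ exactly when $a = b$, i.e. for boxes on the main diagonal; the $z_3$-structure then reorganizes the vertex character into contributions, one per level, each of which is the character of a two-dimensional (skew) partition, and the number of diagonal-balanced weights in such a character is exactly twice $\mathrm{rk}$, with the factor of $2$ absorbed by the fact that tangent and obstruction weights are paired under the symplectic involution (this is where the $\tfrac{1}{2}$ in the definition of $\mathrm{rk}(\lambda)$ comes from, and why $\mathrm{rk}(\lambda) = \tfrac12\sum_r |c_r(\lambda) - c_{r+1}(\lambda)|$ is the right invariant — it counts rim hooks, equivalently the net "diagonal defect"). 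Summing over the vertices and adding the (vanishing) edge contributions gives the stated formula.

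The main obstacle I anticipate is (b): carefully extracting, from the full MNOP vertex character $\mathsf{V}_\rho$ with three distinct equivariant parameters, exactly which monomials degenerate under $t_2 \mapsto -t_1$ and matching their count to $\mathrm{rk}_{t_3}(\rho)$ — in particular handling the infinite legs of $\rho$ correctly, so that the (a priori divergent) tails cancel and leave a finite, manifestly correct multiplicity. The limiting-partition stabilization $\rho^k = \rho^{k+1}$ for $k \gg 0$ is what rescues finiteness, but one has to see that the non-integrality of $\mathrm{rk}$ for individual infinite skew diagrams is consistent with the integrality of the total multiplicity; this is precisely the subtlety that forced the $\tfrac12\sum|c_r - c_{r+1}|$ formulation rather than a naive rim-hook count, and it is the analog of the phenomenon already encountered in \cite{okpandt}. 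I expect the argument there to transfer with only notational changes, since the $z_3$-direction is genuinely a $\CC$ (or $\mathbf{P}^1$) factor in both cases and the surface only enters through the $z_1 z_2$-plane, where the specialization $w^L + w^R \equiv 0$ is identical to the $\CC^2$ situation.
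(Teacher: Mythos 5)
Your overall strategy (decompose $\mathsf{w}(\pi)$ into vertex and edge factors and track the $(t_1+t_2)$-valuation of each) is the same as the paper's, but your two per-factor claims are wrong, and the argument does not close as proposed. The claim (a) that every edge factor is a unit mod $t_1+t_2$ fails for edges along the exceptional curves $E_i$ carrying a nonempty partition $\rho$: your justification only checks that the individual normal weights do not vanish, but the edge character mixes tangent and normal weights, and along $E_i$ the tangent weight and the surface-normal weight sum to $t_1+t_2$ (e.g.\ $w^R_i+w^L_i=t_1+t_2$), so combined weights do vanish mod $t_1+t_2$. A concrete counterexample is the degree-one configuration $E_i\times\{z\}$ with no extra boxes: the obstruction $H^1$ of the normal bundle $\mathcal{O}(-2)\oplus\mathcal{O}$ is one-dimensional of weight exactly $t_1+t_2$ (the middle weight $\tfrac12(w^L_i+w^R_{i+1})=t_1+t_2$), and this factor lives in the edge term, not at the vertices. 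The same example shows claim (b) cannot be right as stated: the two vertices are pure single legs with $\mathrm{rk}_{t_3}(\pi_i)=\tfrac12$, and a half-integer cannot be the $(t_1+t_2)$-valuation of the vertex factor by itself; the total multiplicity $1$ is only recovered because the edge contributes.

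What the paper actually proves is a corrected version of your (a) and (b) in which the discrepancies cancel: the vertex term has multiplicity $\mathrm{rk}_{t_3}(\pi_i)-l(\lambda_1)-l(\lambda_2)$, where $\lambda_1,\lambda_2$ are the limiting leg partitions along the two surface directions, while an edge along $\An$ (normal degrees $a=-2$, $b=0$) contributes a nonzero multiplicity, computed from the explicit edge character by substituting $z_3=z_1^{-1}$ and extracting constant terms, and given by the number of rows of the edge partition; edges along the $\mathbf{P}^1$-direction contribute zero. Summing, the leg corrections at the vertices cancel against the $\An$-edge contributions, leaving $\sum_i \mathrm{rk}_{t_3}(\pi_i)$. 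So the missing idea in your write-up is precisely this bookkeeping of the infinite legs: the leg regularization both depresses the vertex count below $\mathrm{rk}_{t_3}$ and forces the $\An$-edges to carry part of the $(t_1+t_2)$-multiplicity, and without tracking these two compensating effects the claimed per-factor multiplicities are false even though the final formula is correct.
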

\begin{proof}
The equivariant weight consists of a product of vertex contributions from each $3$d partition $\pi_{i}$
and edge contributions from the $2$d partitions associated to each edge of the toric diagram.
For the vertex contribution associated to $\pi_{i}$, let $\lambda_{1}, \lambda_{2}$ be the limiting edge partitions for the edges along the surface $\An$.  The calculation in \cite{okpandt} essentially gives
that the multiplicity of the vertex term is given by
$$\mathrm{rk}_{t_3}(\pi_{i}) - l(\lambda_{1}) - l(\lambda_{2}),$$
where $l(\lambda)$ is the number of rows of the Young diagram with $t_3$ identified with the $y$-axis.

For the edges, the Poincare polynomial encoding linear factors of the edge term
associated to $\rho$ is given by
$$\frac{z_{3}^{-1}F_{\rho}(z_1, z_2)}{1-z_3^{-1}} - \frac{F_{\rho}(z_{1}z_{3}^{-a},z_{2}z_{3}^{-b})}{1-z_3}$$
where 
$$F_{\rho} = Q_{\rho}(z_1,z_2)+ \frac{1}{z_{1}z_{2}}Q_{\rho}(z_1^{-1},z_{2}^{-1}) + 
\frac{Q_{\rho}(z_1,z_2)Q_{\rho}(z_1^{-1},z_{2}^{-1})(1-z_{1})(1-z_{2})}{z_{1}z_{2}}$$
and $Q_{\rho}$ is the Poincare polynomial of the the partition $\rho$.
For edges along $\mathbf{P}^{1}$, the multiplicity of $t_1+t_2$ is zero.  For edges along $\An$, we
have $a = -2, b=0$ and we want to extract the coefficient of $(z_{1}z_{3})^{k}$ for all $k$.
This is obtained by substituting $z_{3} = z_{1}^{-1}$ and taking the constant term.
This is easily seen to be given by $l(\rho)$.  Summing these contributions with the vertex terms gives the lemma.
\end{proof}

We can prove the proposition for absolute DT invariants.
\begin{lemma}\label{absolutedivisibility}
For $\beta \ne 0$,
$$\langle \sigma_{k_1}(\gamma_{l_1}),\dots,
\sigma_{k_r}(\gamma_{l_a})\rangle^{\DT}_{(\beta,m)} = 0 \mod(t_1+t_2)$$
\end{lemma}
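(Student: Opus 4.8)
The plan is to reduce the statement, via $(\CC^*)^3$-localization together with the preceding lemma, to a combinatorial positivity fact about $\An$-box configurations. Following Section~\ref{equivariant vertex}, localize with respect to the full $(\CC^*)^3$-action on $\An\times\mathbf{P}^1$, with $t_3$ the equivariant parameter in the $\mathbf{P}^1$-direction. Since the invariant in question is a $T$-residue over the compact $T$-fixed locus, it lies in $\QQ(t_1,t_2)$, so it equals the (necessarily $t_3$-independent) sum over $\An$-box configurations $\pi$ with $(\beta(\pi),m(\pi))=(\beta,m)$ of terms $\mathsf{w}(\pi)\cdot D_\pi$, where $D_\pi$ is the restriction of the descendent insertions $\prod_i ch_{k_i+2}(\mathcal I)\,\pi_2^*(\gamma_{l_i})$ to the isolated fixed point $\pi$. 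The factor $D_\pi$ is assembled from Chern characters of the universal sheaf at $\pi$ and from localized integration over $X$; since none of the tangent weights $w^L_i$, $w^R_i$, $\pm t_3$ of $X$ at its fixed points is a rational multiple of $t_1+t_2$, the factor $D_\pi$ has no pole along $t_1+t_2$. As the $(t_1+t_2)$-adic valuation on $\QQ(t_1,t_2,t_3)$ is non-archimedean, it now suffices to prove that $\mathsf{w}(\pi)$ is divisible by $t_1+t_2$ for every configuration with $\beta\ne 0$; by the preceding lemma this says exactly that $\sum_{\mathrm{fixed points}}\mathrm{rk}_{t_3}(\pi_i)>0$.

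\textbf{The combinatorial heart.} To establish this, I would first show that if a single $3$d partition $\pi_v$ has $\mathrm{rk}_{t_3}(\pi_v)=0$, then the two edge partitions of $\pi_v$ along the two $\An$-directions at $v$ coincide. Indeed, $\mathrm{rk}_{t_3}(\pi_v)=0$ forces every skew diagram $\pi_v^k/\pi_v^{k+1}$ to have constant content function; writing $C^k_r=c_r(\pi_v^k)$, the quantity $C^k_r-C^{k+1}_r$ is then independent of $r$ and vanishes once $k$ is large enough that the level sets have stabilized to the finite limiting partition $\pi_v^\infty$ along $z_3$, so by telescoping $C^k_r=c_r(\pi_v^\infty)+d_k$ with $d_k$ independent of $r$. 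Letting $r\to\pm\infty$ and using that $c_r(\pi_v^\infty)\to 0$ at both ends (as $\pi_v^\infty$ is finite), both infinite-leg widths of $\pi_v^k$, one for each surface direction, equal $d_k$; running over all $k$, the two $\An$-edge partitions of $\pi_v$ have identical level sets across $z_3$, hence coincide. Now suppose $\beta\ne 0$. Then at height $0$ or at height $\infty$ the set of compact $\An$-edges $E_1,\dots,E_n$ carrying a nonempty edge partition is nonempty, and since the noncompact edges $E_0,E_{n+1}$ always carry the empty partition, there must be a fixed point $v$ one of whose two $\An$-edges carries a nonempty partition while the other carries the empty partition (otherwise the empty status would propagate along the whole chain starting from $E_0$, forcing every $E_i$ empty). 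For this $v$ the two $\An$-edge partitions differ, so $\mathrm{rk}_{t_3}(\pi_v)>0$ by the previous step, and since every summand is non-negative, $\sum_i\mathrm{rk}_{t_3}(\pi_i)>0$.

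\textbf{Main obstacle.} The real content is the combinatorial positivity of the second paragraph, and within it the key point that $\mathrm{rk}_{t_3}$ at a vertex can vanish only when the two surface-edge partitions there agree, which forces the non-vanishing to occur at the boundary of the $\An$-support of the curve. The bookkeeping behind the first paragraph --- legitimacy of the iterated localization computing the residue, and the absence of $t_1+t_2$-poles coming from the descendent insertions --- is routine and goes through exactly as in the $\CC^2\times\mathbf{P}^1$ analysis of \cite{okpandt} that the preceding lemma generalizes.
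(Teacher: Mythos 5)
Your proposal is correct and follows essentially the same route as the paper: localization with respect to the full three-torus, the lemma identifying $\mathrm{mult}_{t_1+t_2}(\mathsf{w}(\pi))$ with $\sum_i \mathrm{rk}_{t_3}(\pi_i)$, the observation that descendent factors carry no $(t_1+t_2)$-poles, and positivity of the rank sum forced at the boundary of the $\An$-support when $\beta\ne 0$. The only (harmless) difference is in the final combinatorial step: the paper notes there are at least two vertices with exactly one infinite leg along $\An$, each contributing rank at least $1/2$, whereas you prove that vanishing rank at a vertex forces its two $\An$-edge partitions to coincide and then use a single boundary vertex where they differ, which still gives positive (hence at least $1$, by integrality of the multiplicity) valuation.
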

\begin{proof}
Given an $\An$-box configuration with $\beta \ne 0$, we show that $\mathrm{rk}_{t_3}(\pi) \geq 1$.
Indeed, since $\beta \ne 0$, there are at least two vertices with exactly one infinite leg along $\An$.
For such $\pi_i$, at least one of the slices $\pi_i^{(k)}$ has a single infinite leg which forces its rank to be at least $1/2$.  As there are at least two such vertices, we have that the multiplicity of $(t_1+t_2)$
in each fixed locus contribution is at least one.  The descendents play no role in this statement, since they contribute polynomial terms to each fixed locus contribution.  
\end{proof}

\subsection{Inductive strategy}

We define an operator $\mathsf{M}$, acting on the space of multipartitions of $m$ with $n+1$ components, which allows us to trade relative insertions for descendent insertions.  In what follows, given a multipartition $\overrightarrow{\mu}= \mu_{1},\dots,\mu_{n+1}$, we use the shorthand $\sigma(\overrightarrow{\mu})$ for the descendent insertions
$$\prod \sigma_{\mu_i -1}(\iota_{*} [p_{i}]).$$
Here
$\iota:\An \rightarrow X$ is the inclusion of a fiber.
Given two multipartitions of $m$, $\overrightarrow{\mu}, \overrightarrow{\rho}$,
let $\mathsf{M}$ be the matrix with elements
$$q^{-m}\langle \sigma(\overrightarrow{\mu})| [J_{\overrightarrow{\rho}}]\rangle^{\DT} \in \CC(t_1,t_2)((q))[[s_1,\dots,s_n]],$$
where $[J_{\overrightarrow{\rho}}]$ is the relative condition imposed by the associated $T$-fixed point.

We first have the following lemma.
\begin{lemma}
The specialization 
$$\mathsf{M}_{0} = \mathsf{M}\mid_{q=s_{1}=\dots=s_{n}=0}$$
is well-defined and invertible. 
\end{lemma}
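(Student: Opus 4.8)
The plan is to show that at $q = s_1 = \dots = s_n = 0$ the matrix $\mathsf{M}$ degenerates to a block-triangular (in fact essentially diagonal) matrix with nonzero entries on the diagonal, by controlling which terms of the descendent–relative pairing survive the specialization. First I would observe that setting $s_i = 0$ kills all contributions with $\beta \neq 0$, so $\mathsf{M}_0$ only involves the $\beta = 0$ sector of the relative DT theory of $\An \times \mathbf{P}^1$. In the $\beta = 0$ sector the ideal sheaf is supported set-theoretically on fibers, and by the factorization/evaluation of $\beta = 0$ relative invariants from \cite{okpandt} (together with the degeneration formula) the leading behaviour in $q$ of $q^{-m}\langle \sigma(\overrightarrow{\mu}) \mid [J_{\overrightarrow{\rho}}]\rangle^{\DT}$ is governed by a purely local (vertex) computation on $\CC^2 \times \mathbf{P}^1$: the relative condition $[J_{\overrightarrow{\rho}}]$ forces the subscheme to limit, near the relative fiber, to the monomial ideal $I_{\rho^{(i)}}$ at each $T$-fixed point $p_i$, and the descendent $\sigma(\overrightarrow{\mu}) = \prod \sigma_{\mu_i - 1}(\iota_* [p_i])$ is also localized at the $p_i$.

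The key step is then to verify that the $q^{-m}$-normalization extracts precisely the minimal-$\chi$ term, and that in this minimal term the descendent insertions $\sigma_{\mu_i - 1}(\iota_*[p_i])$ pair nontrivially with the fixed point $J_{\overrightarrow\rho}$ only when $\overrightarrow\mu$ and $\overrightarrow\rho$ are ``comparable'' in a suitable dominance-type order on multipartitions — the same phenomenon that makes the analogous matrix invertible in \cite{hilban} and in the $\CC^2$ case of \cite{okpanhilb, okpandt}. Concretely, I would (i) compute the constant term (in $q$, after the $q^{-m}$ shift) of each matrix entry as a product over the $n+1$ fixed points of local $\CC^2 \times \mathbf{P}^1$ descendent–relative vertex integrals; (ii) invoke the known triangularity of the $\CC^2$ descendent/Nakajima change-of-basis (the descendent $\sigma_{k-1}(\iota_*[pt])$ versus the monomial-ideal fixed-point basis) to conclude that $\mathsf{M}_0$ is triangular with respect to a partial order refined to a total order; (iii) check that the diagonal entries are the products of the local ``top'' pairings, which are explicitly nonzero rational functions of $t_1, t_2$ (they are, up to sign and powers of the tangent weights $w_i^L, w_i^R$, the values computed in \cite{okpandt}). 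Well-definedness of the specialization is the easy part: each entry lies in $\CC(t_1,t_2)((q))[[s_1,\dots,s_n]]$ by construction, and setting the $s_i$ to $0$ and extracting the $q^0$-coefficient (after the $q^{-m}$ shift) is legitimate because the $\chi$-sum is bounded below, a standard fact for DT partition functions with fixed curve class which is implicit in the setup of section~\ref{rubber geometry}.

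The main obstacle I anticipate is step (ii)–(iii): making the triangularity statement precise requires choosing the right partial order on multipartitions $\overrightarrow\rho$ of $m$ (presumably a product over fixed points of dominance order on the individual $\rho^{(i)}$, combined across fixed points by total size) and then checking that the descendent $\prod \sigma_{\mu_i-1}(\iota_*[p_i])$ paired against $[J_{\overrightarrow\rho}]$ vanishes unless $\overrightarrow\mu \preceq \overrightarrow\rho$, with an explicitly computable nonzero value when $\overrightarrow\mu = \overrightarrow\rho$. This is exactly the kind of computation that \cite{okpandt} carries out for $\CC^2$ and that \cite{hilban} adapts; since the paper's stated philosophy (see the Overview) is to cite such arguments when they apply without modification, I expect the proof to reduce the $\An$ statement to the local $\CC^2 \times \mathbf{P}^1$ computation at each of the $n+1$ fixed points, apply the cited triangularity there, and assemble the pieces. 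The only genuinely new bookkeeping is matching the $\An$ tangent weights $w_i^L = (n+2-i)t_1 + (1-i)t_2$, $w_i^R = (-n+i-1)t_1 + it_2$ into the local formulas and confirming the diagonal entries remain units in $\CC(t_1,t_2)$.
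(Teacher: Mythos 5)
Your proposal follows essentially the same route as the paper's proof: setting $s_1=\dots=s_n=0$ kills all $\beta\neq 0$ contributions, the $\beta=0$ matrix factors over the $n+1$ fixed points into tensor products of the analogous $\CC^2$ matrices, and invertibility (with well-definedness at $q=0$) is then the $\CC^2$ statement from section 4.6 of \cite{okpandt}. The triangularity and nonvanishing-diagonal analysis you sketch in steps (ii)--(iii) is precisely the content of that cited section, which the paper simply invokes rather than reproves.
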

\begin{proof}
Under the specialization $s_1=\dots=s_n=0$, the matrix $\mathsf{M}$ 
decomposes into the direct sum of tensor products of the analogous matrices $M_d$ for $\CC^2$.  For each factor, the statement follows
from section $4.6$ of \cite{okpandt}.
\end{proof}

We have a partial ordering on the set of discrete $(\beta,\chi)$ given by 
$$(\beta^{'},\chi^{'}) \leq (\beta, \chi)$$
if either $\beta - \beta^{'}$ is nonzero and effective or $\beta = \beta^{'}$ and $\chi^{'} \leq \chi$.
The strategy for proving Proposition \ref{divisibility} is to induct on the discrete invariants $(\beta,\chi)$
and prove the statement simultaneously with the following special case.
\begin{lemma}\label{matrixdivisibility} For $\beta \ne 0$,
$$\langle \sigma(\overrightarrow{\mu})|[J_{\overrightarrow{\rho}}]\rangle^{\DT}_{\beta,\chi} = 0 \mod(t_1+t_2),$$
where $\overrightarrow{\mu}$, $\overrightarrow{\rho}$ are as above.
\end{lemma}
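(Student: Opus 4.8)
The plan is to prove Lemma \ref{matrixdivisibility} by induction on the discrete invariants $(\beta,\chi)$ in the partial ordering introduced above, carrying it along simultaneously with Proposition \ref{divisibility} as indicated in the text. The base case comes from Lemma \ref{absolutedivisibility}: the absolute descendent invariants with $\beta\ne 0$ vanish mod $(t_1+t_2)$, via the box-configuration weight analysis (every $\An$-box configuration with $\beta\ne 0$ has $\mathrm{rk}_{t_3}(\pi)\geq 1$). The inductive engine is the degeneration formula. Specifically, I would take the relative geometry $X/S$ with the single relative fiber carrying the condition $[J_{\overrightarrow{\rho}}]$, and degenerate the base $\mathbf P^1$ so as to separate that relative point from a new relative divisor carrying the descendent insertions $\sigma(\overrightarrow{\mu})$. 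The degeneration formula writes $\langle \sigma(\overrightarrow{\mu})\,|\,[J_{\overrightarrow{\rho}}]\rangle^{\DT}_{\beta,\chi}$ as a sum over splittings $\beta = \beta'+\beta''$, $\chi=\chi'+\chi''$, and an intermediate cohomology-weighted partition (inserted via the completeness relation for the Poincaré pairing on $\Hb_m(\An)$) of products of invariants on the two pieces, one of which is a rubber invariant and the other a relative invariant on the original $X/S$.

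The key point is a term-by-term divisibility count. In each term of the degeneration sum, at least one of the two factors should fall under the inductive hypothesis or the already-established cases: if $\beta'\ne 0$ then that factor is divisible by $(t_1+t_2)$ (by the inductive hypothesis at the strictly smaller discrete invariant, using that $\beta - \beta'$ is effective, or by Lemma \ref{absolutedivisibility} if it is the absolute/descendent piece), and similarly if $\beta''\ne 0$; and since $\beta\ne 0$, at least one of $\beta',\beta''$ is nonzero. The degenerate contributions where $\beta'=0$ or $\beta''=0$ are controlled by the $\beta=0$ factorization results cited from \cite{okpandt} together with the Lemma stating that the $\beta=0$ invariants vanish mod $(t_1+t_2)$ for $\chi>m$; one has to check the Euler number bookkeeping so that the $\beta=0$ piece always lands in the range where that vanishing applies, or else the nonzero-$\beta$ piece supplies the divisibility. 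I would also use the rigidification lemma of the previous subsection to pass freely between rubber and non-rubber invariants, so that the rubber case of Proposition \ref{divisibility} follows from the relative case without separate argument. One subtlety: the intermediate insertion runs over a basis of $H^*_T(\Hb_m(\An))$ whose elements are not themselves of the special form $[J_{\overrightarrow{\rho}}]$ or $\sigma(\overrightarrow{\mu})$; here I would use that $\mathsf M_0$ is invertible (the preceding lemma) to re-express an arbitrary relative insertion, at leading order in $q,s$, as a combination of the $[J_{\overrightarrow{\rho}}]$, and then bootstrap up the $(\beta,\chi)$-filtration.

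The main obstacle I expect is precisely this bookkeeping of the degeneration formula: making sure that in every splitting at least one factor is genuinely covered by a strictly-lower instance of the induction (the partial order must strictly decrease), that the $\beta=0$ boundary terms are simultaneously controlled by the Euler-characteristic constraint $\chi>m$, and that inserting the completeness relation does not introduce insertions outside the inductive framework. A secondary obstacle is that the descendent insertions $ch_{k+2}(\gamma)$ must be tracked through the degeneration — one needs that they only ever contribute polynomial (in $t_1+t_2$) factors and never obstruct divisibility, which is already noted in the proof of Lemma \ref{absolutedivisibility} but must be re-verified in the relative setting. Once the term-by-term divisibility is in place, summing over all terms of the degeneration gives the claimed vanishing mod $(t_1+t_2)$, completing the induction and hence both Lemma \ref{matrixdivisibility} and Proposition \ref{divisibility}.
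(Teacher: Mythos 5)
There is a genuine gap: the degeneration formula by itself cannot prove the Lemma at the discrete invariants $(\beta,\chi)$ currently under consideration. When you degenerate the base $\mathbf{P}^1$ to separate the descendent insertions $\sigma(\overrightarrow{\mu})$ from the relative condition $[J_{\overrightarrow{\rho}}]$, the splitting sum always contains the terms in which the \emph{entire} curve class and Euler characteristic sit on one side and the other side is the trivial piece with $\beta''=0$, $\chi''=m$ (whose invariants reduce to the classical pairing, an invertible matrix, not something small). That term is exactly the unknown invariant itself, up to the invertible gluing matrix $\Delta\circ\mathsf{M}_0$, so the identity you obtain expresses the unknown in terms of itself plus lower-order terms and yields no divisibility. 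You flag this yourself ("the partial order must strictly decrease"), but it cannot be repaired inside the degeneration framework: the direction of the induction you describe is essentially the paper's argument that Lemma \ref{matrixdivisibility} at strictly smaller $(\beta',\chi')$ implies Proposition \ref{divisibility} at those smaller invariants, not a proof of the Lemma at $(\beta,\chi)$.

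The paper's proof of this direction uses two ingredients absent from your proposal. First, it localizes $\langle \sigma(\overrightarrow{\mu})|[J_{\overrightarrow{\rho}}]\rangle^{\DT}_{\beta,\chi}$ with respect to the full torus $T\times\CC^{\ast}$; each fixed locus splits into an $\An$-box configuration in the bulk and a rubber locus encoded by a skewer/twistor graph $\Gamma$. By the equivariant vertex multiplicity count and Lemma \ref{skewtwist} (twistor contributions are always divisible by $t_1+t_2$ via the comparison with $\Hb_m(\An)$ in Lemma \ref{hilbdtcompare}, and skewer contributions at strictly smaller discrete invariants are divisible by the inductive hypothesis, i.e.\ Proposition \ref{divisibility} below $(\beta,\chi)$), the only potentially non-divisible contribution is a single-skewer term carrying the full $(\beta,\chi)$; crucially, a single skewer forces equal boundary conditions at both ends, so these contributions assemble into a \emph{diagonal} matrix $\mathsf{D}$, and one gets $\mathsf{M}_{\beta,\chi}\equiv \mathsf{M}_0\circ\Delta\circ\mathsf{D} \bmod (t_1+t_2)$. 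Second, to kill $\mathsf{D}$ the paper brings in the matrix $\mathsf{N}$ of absolute two-descendent invariants $\langle\sigma(\overrightarrow{\mu}),\sigma(\overrightarrow{\nu})\rangle^{\DT}_{(\beta,m),\chi}$, which vanish mod $(t_1+t_2)$ by Lemma \ref{absolutedivisibility}; the degeneration formula gives $\mathsf{N}\equiv \mathsf{M}_{\beta,\chi}\circ\Delta\circ\mathsf{M}_0^{tr}+\mathsf{M}_0\circ\Delta\circ\mathsf{M}_{\beta,\chi}^{tr}$ modulo lower terms, and the diagonality of $\mathsf{D}$ makes the two summands equal, so invertibility of $\mathsf{M}_0$ forces $\mathsf{D}\equiv 0$ and hence the Lemma. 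Without the localization step identifying the leftover as diagonal and without this symmetrization against the absolute invariants, the term-by-term divisibility count you propose does not close.
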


Assuming Lemma \ref{matrixdivisibility} for discrete invariants $(\beta^{'},\chi^{'}) < (\beta,\chi)$,
we first show that this implies Proposition \ref{divisibility} for $(\beta^{'},\chi^{'}) < (\beta,\chi)$.
For simplicity, we show this in the case of a single relative fiber.
By the degeneration formula, we have
\begin{multline*}
\langle \prod \sigma_{a_k}(\iota_{\ast}\gamma_{k}), \sigma(\overrightarrow{\mu})\rangle^{\DT}_{\beta^{'},\chi^{'}}
=  \sum _{\overrightarrow{\rho}} \langle \prod \sigma_{a_k}(\iota_{\ast}\gamma_{k})| [J_{\overrightarrow{\rho}}]\rangle_{\beta^{'},\chi^{'}}^{\DT}
\Delta(\overrightarrow{\rho}) \langle [J_{\overrightarrow{\rho}}]| \sigma(\overrightarrow{\mu})\rangle^{\DT}_{\beta=0,\chi=m}
+ \\\sum_{\stackrel{\overrightarrow{\rho}}{(\beta'',\chi'')< (\beta',\chi')}}
 \langle \prod \sigma_{a_k}(\iota_{\ast}\gamma_{k})|
 [J_{\overrightarrow{\rho}}]\rangle_{\beta^{''},\chi^{''}}^{\DT}
\Delta(\overrightarrow{\rho})q^{-m} \langle [J_{\overrightarrow{\rho}}]| \sigma(\overrightarrow{\mu})\rangle^{\DT}_{\beta^{'''},\chi^{'''}}
\end{multline*}
Here, $\Delta(\overrightarrow{\rho}) = \langle [J_{\overrightarrow{\rho}}],[J_{\overrightarrow{\rho}}]\rangle^{-1}$ is the matrix of gluing terms in the degeneration formula.

The left-hand side and the second summand of the right-hand side have positive valuation with respect to $t_1+t_2$ by Lemma \ref{absolutedivisibility} and our assumption of Lemma \ref{matrixdivisibility}.  The invertibility of $\mathsf{M}_{0}$ implies the result for 
$$ \langle \prod \sigma_{a_k}(\iota_{\ast}\gamma_{k})|[J_{\overrightarrow{\rho}}]\rangle_{\beta^{'},\chi^{'}}^{\DT}.
 $$
Since $[J_{\overrightarrow{\rho}}]$ can be expressed in terms of the Nakajima basis with coefficients without poles along $t_1+t_2=0$, this proves the result for relative invariants.

For rubber invariants of the form
$$\langle \overrightarrow{\mu}| \psi_{0}^{a} \psi_{\infty}^{b}| \overrightarrow{\nu}\rangle_{\beta^{'},\chi^{'}},$$
where $\psi_{0},\psi_{\infty}$ are cotangent lines on the moduli space of target degenerations,
the rigidification lemma above and $\psi$-removal lemmas from \cite{okpandt} express
these invariants in terms of relative invariants of the sort already handled.  This concludes the proof of the full statement of Proposition \ref{divisibility} from the partial statement of Lemma \ref{matrixdivisibility}.

To prove Lemma \ref{matrixdivisibility} and Proposition \ref{divisibility}, it remains to prove a statement in the other direction.  That is, given $(\beta,\chi)$, and assuming Proposition \ref{divisibility}
for all $(\beta^{'},\chi^{'}) < (\beta,\chi)$, we want to show Lemma \ref{matrixdivisibility} for 
$(\beta,\chi)$.  This allows us to prove both statements for all $(\beta,\chi)$.
We will prove this direction via a localization argument with respect to the full torus $T \times \CC^{\ast}$. 
We first discuss the fixed loci of the rubber moduli space with respect to the $T$-action.

\subsection{Skewers and twistors}

In this section, we describe components of the $T$-fixed loci
of $I_{\chi}(X,(\beta,m))^{\sim}$ following the discussion of \cite{okpandt}.
As always with rubber geometries, the degeneration and $\CC^{\ast}$-scaling of the target leads to complicated non-isolated fixed loci.
Let $[I] \in I_{\chi}(X,(\beta,m))^{T,\sim}$ be a $T$-fixed ideal sheaf defined on a rubber target fibered by
$\An$ over a chain $C$ of rational curves.  Given a rational component $P$ of the chain, the restriction
$I_P$ of $I$ to $\An\times P$ can be classified according the following two possibilities, depending
on whether $I_P$ is flat over $P$.  The restriction of $I_P$ to the two distinguished fibers of
$\An\times P$ over $0$ and $\infty$ correspond to $T$-fixed points $[J_{\overrightarrow{\lambda}}], [J_{\overrightarrow{\rho}}]$ of $\Hb_m(\An)$.
In each case, the scaling action defines a $T$-representation on the tangent space to
$P$ at the distinguished points $0,\infty$, we have associated (fractional) $T$-weights
$$w_{P,0}, w_{P,\infty}.$$
\begin{enumerate}
\item If $I_P$ is not flat over $P$, we say $I_{P}$ is a \textit{skewer}.  A skewer arises from a $T$-fixed ideal sheaf of the rigid moduli space
$$I_{*}(\An\times\mathbf{P}^{1}, (\beta,m))^{T}.$$
In this case, we must have $[J_{\overrightarrow{\lambda}}] = [J_{\overrightarrow{\rho}}]$.
Since the ideal sheaf is $T$-fixed on the rigid space, we have
$$w_{P,0} = w_{P,\infty} = 0.$$

\item If $I_P$ is flat over $P$, we say $I_P$ is a \textit{twistor}.  A twistor arises 
from a $T$-fixed map
$$[f_{tw}]\in M_{0,\{0,\infty\}}(\Hb_{m}(\An), *)^{T}.$$
If we identify the domain of the curve with $P$, the ideal sheaf $I_P$ is obtained by pulling back
the universal ideal sheaf via $f_{tw}$.  The image of the fundamental class $[P]$ 
under $f_{tw}$ is determined by the Euler characteristic $\chi(\mathcal{O}_{Z_P})$ and
homology class $\beta$.  More specifically, for the basis of $H^{2}(\Hb(\An),\QQ)$
$$D, (1,\omega_{k})$$
defined in \cite{hilban},
we have
\begin{align}\label{twistordegrees}
\mathrm{deg}(f_{tw}^{\ast}(D)) &= \chi(\mathcal{O}_{Z_{P}}) - m, \\
\mathrm{deg}(f_{tw}^{\ast}((1,\omega_k)))& = \omega_{k}\cdot \beta.\nonumber
\end{align}
\end{enumerate}

The tangent weights $w_{P,0}$ and $w_{P,\infty}$ are precisely
the fractional tangent weights associated at $0$ and $\infty$ to 
the $T$-fixed map $f_{tw}$.  It can be calculated explicitly in
terms of $\overrightarrow{\lambda},\overrightarrow{\rho},$ and the above divisor degrees.

\subsection{Results from \cite{okpandt}}

The following analysis of skewer and twistor contributions is proven in \cite{okpandt}.
We first have the following lemma (Lemma $25$ in \cite{okpandt}).  In what follows,
let $$U_{m,\beta,\chi} \subset \overline{M}_{0,\{0,\infty\}}(\Hb_{m}(\An), \gamma)$$
denote the open set of stable maps for which the domain is a chain of rational curves.  
Here $\gamma \in H_{2}(\Hb(\An),\ZZ)$ is the curve class determined by the degrees (\ref{twistordegrees}).
By pullback, $U_{m,\beta,\chi}$ admits an open immersion into $I_{\chi}(X,(\beta,m))^{\sim}$.

\begin{lemma}\label{hilbdtcompare}
The $T$-equivariant Gromov-Witten and Donaldson-Thomas obstruction theories on $U_{m,\beta,\chi}$
inherited from the above embeddings are canonically isomorphic.
\end{lemma}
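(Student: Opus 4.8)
The plan is to exhibit both obstruction theories as coming from a single geometric object and to identify the relevant deformation/obstruction groups directly. Recall that $U_{m,\beta,\chi}$ parametrizes stable maps to $\Hb_m(\An)$ whose domain is a chain of rational curves, and via pullback of the universal ideal sheaf this maps into $I_\chi(X,(\beta,m))^\sim$. Over a point of $U_{m,\beta,\chi}$ corresponding to a stable map $f\colon C \to \Hb_m(\An)$, the Gromov-Witten obstruction theory is controlled by the complex $R\pi_* f^* T_{\Hb_m(\An)}$ (together with the deformations of the domain curve), while the Donaldson-Thomas obstruction theory is controlled by $R\mathrm{Hom}(\mathcal{I}, \mathcal{I})_0$ on the rubber target, where $\mathcal{I}$ is the ideal sheaf obtained by pulling back the universal sheaf on $\An\times\mathbf{P}^1$. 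First I would recall that a stable map with chain domain corresponds, on the DT side, to a flat family of length-$m$ subschemes over each rational component — i.e. the ideal sheaf is a "twistor" in the terminology of the previous subsection — so the two moduli problems parametrize literally the same objects over $U_{m,\beta,\chi}$, and the content of the lemma is the identification of their tangent-obstruction complexes.

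The key step is the following local computation. Since $\Hb_m(\An)$ is itself (étale-locally, or after passing to the Quot scheme) a moduli space of ideal sheaves on the surface $\An$, there is a standard identification of its tangent space at $[Z]$ with $\mathrm{Ext}^1_{\An}(\mathcal{I}_Z,\mathcal{O}_Z) = \mathrm{Hom}(\mathcal{I}_Z, \mathcal{O}_Z)$, and since $\An$ is a smooth surface carrying a holomorphic symplectic form, $\Hb_m(\An)$ is smooth and the higher obstruction group vanishes. Pulling back along $f$ and pushing forward along $\pi$, the GW-theoretic complex becomes $R\pi_*\, f^* T_{\Hb}$. On the DT side, for an ideal sheaf $\mathcal{I}$ on the threefold $\An\times P$ (for a component $P$ of the chain) which is flat over $P$, one computes $R\mathrm{Hom}(\mathcal{I},\mathcal{I})_0$ by relating it, via the flatness over $P$ and the projection $\An\times P \to P$, to $R\pi_*$ of the family of surface Ext-groups — exactly the object appearing on the GW side. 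This is the same computation carried out in \cite{okpandt} for $\CC^2$; the argument there is entirely local on the surface and uses only the smoothness of $\Hb(\CC^2)$ and the holomorphic symplectic structure, both of which hold for $\An$, so it applies without modification. One must also match the contributions from the nodes of the chain and from the relative divisors at $0,\infty$ on each component: on the GW side these come from smoothing/non-smoothing the chain, and on the DT side from the rubber target degeneration data and the boundary maps $\epsilon_0,\epsilon_\infty$ — again these are identified as in \cite{okpandt}.

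The main obstacle I expect is the careful bookkeeping at the chain nodes and at the rubber boundary: one must check that the gluing/smoothing deformations and the corresponding obstruction contributions match up on the nose, including equivariant weights, and that the $\CC^*$-rigidification (quotienting the rubber target by scaling, versus the automorphisms of the chain domain in $\overline{M}_{0,\{0,\infty\}}$) is treated consistently on both sides. I would handle this by reducing to a single component $P$ — where the twistor description gives a direct dictionary between the DT sheaf and the stable map $f_{tw}$ — and then invoking the standard gluing formalism for both theories along the chain, as in \cite{okpandt}; since the excerpt explicitly allows citing \cite{okpandt} when its arguments apply verbatim, the bulk of this verification reduces to checking that nothing in that argument used the specific geometry of $\CC^2$ beyond its smoothness and holomorphic symplectic form. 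The $t_1+t_2$-divisibility statements established in Proposition \ref{divisibility} are the substitute for the reduced virtual class that makes the comparison robust in the equivariant setting.
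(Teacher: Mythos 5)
Your proposal is correct and takes essentially the paper's route: the paper offers no independent argument but quotes Lemma 25 of \cite{okpandt} as applying verbatim, and that cited content is precisely your identification of flat (twistor) ideal sheaves over a chain with stable maps to $\Hb_{m}(\An)$, together with the matching of $R\mathrm{Hom}(\mathcal{I},\mathcal{I})_{0}$ with $R\pi_{*}f^{*}T_{\Hb_{m}(\An)}$ plus the node/rubber boundary data, the argument using only the surface deformation theory of ideal sheaves and hence carrying over from $\CC^{2}$ unchanged. Two small corrections: drop the closing appeal to Proposition \ref{divisibility}, since the divisibility statements are logically downstream of this lemma (it feeds into Lemma \ref{skewtwist}) and play no role in the obstruction-theory comparison; and the tangent space of $\Hb_{m}(\An)$ at $[Z]$ is $\mathrm{Hom}(\mathcal{I}_{Z},\mathcal{O}_{Z})$, while $\mathrm{Ext}^{1}_{\An}(\mathcal{I}_{Z},\mathcal{O}_{Z})$ is the (vanishing) obstruction space, not equal to it.
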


The next results cited from \cite{okpandt} directly involve the contribution 
of a given fixed locus to the rubber invariant
\begin{equation}\label{twopointrubber}
\langle [J_{\overrightarrow{\lambda}}] , [J_{\overrightarrow{\rho}}]\rangle^{\DT,\sim}_{\beta,\chi}.
\end{equation}

Given $[I] \in I_{\chi}(X,(\beta,m))^{T,\sim}$ a $T$-fixed ideal sheaf on a rubber target over a chain $C$ of rational curves, we associate the following labelled graph $\Gamma_I$.  
The graph is an oriented chain consisting of vertices $V$ associated to 
\begin{enumerate}
\item connected subcurves of skewers over $C$ (which form a distinguished subset $S\subset V$)
\item nodes $s$ of $C$ for which the two incident components are twistors $P,P'$ for which
$$w_{P,s}+ w_{P',s} \ne 0 \mod t_1+t_2$$
\item and the subset of the two marked points $0$ and $\infty$ on $C$ which are incident to twistors.
\end{enumerate}

The edges $E$ correspond to connected subcurves of twistors for which the sum of the tangent weights 
vanish $\mod t_1+t_2$ at all interior nodes.

We label $\Gamma_{I}$ as follows.  First, each vertex can be associated with a $T$-fixed point
$J_{\overrightarrow{\kappa}} \in \Hb_{m}(\An).$  Second, edges and the distinguished vertices $S$ associated to skewers are also decorated with degrees $(\beta_i,\chi_i)$ associated to the 
$\An$-degree and Euler characteristic of the corresponding subscheme.

The resulting labelled chain graph is invariant as $I$ varies in a connected component of the fixed locus
$I_{\chi}(X,(\beta,m))^{T,\sim}$.  Let $G_{\chi,\beta}(\overrightarrow{\lambda},\overrightarrow{\rho})$
denote the set of labelled, oriented graphs obtained in the above manner.  
Given $\Gamma \in G_{\chi,\beta}(\overrightarrow{\lambda},\overrightarrow{\rho})$
we denote by 
$$\langle [J_{\overrightarrow{\lambda}}] | [J_{\overrightarrow{\rho}}]\rangle^{\Gamma,\sim}_{\beta,\chi}$$
the localization contribution to \eqref{twopointrubber} of those connected components with associated graph $\Gamma$.
As explained in \cite{okpandt}, this localization contribution factors into contributions
$R_{v}$
from descendent skewer integrals for each distinguished vertex in $v\in S$ and descendent twistor integrals $R_{e}$ corresponding
to the edges $e\in E$ of $\Gamma$, where the descendents arise from target cotangent lines.  The following divisibility statement is
proven in section $8.3.5$ of \cite{okpandt}; we refer the reader there for more details.

\begin{lemma}\label{skewtwist}
The twistor contributions $R_{e}\in \CC(t_1,t_2)$
have positive valuation with respect to $t_1+t_2$ for all
labels $(\beta_e, \chi_e)$.  Under the assumption of
Proposition \ref{divisibility} for  $(\beta^{'},\chi^{'}) \leq (\beta,\chi)$,
the skewer contributions $R_v$ have positive valuation for
all skewer integrals with label $(\beta_v,\chi_v) \leq (\beta, \chi)$.
\end{lemma}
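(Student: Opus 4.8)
The plan is to handle the twistor contributions $R_e$ and the skewer contributions $R_v$ by two separate arguments, in each case transplanting the reasoning of Section~8.3.5 of \cite{okpandt} and substituting for its $\CC^2$-specific inputs the $\An$-analogues available here: the comparison of DT and Gromov--Witten obstruction theories on the chain-map locus $U_{m,\beta,\chi}$ (Lemma \ref{hilbdtcompare}), the $(t_1+t_2)$-divisibility of genus $0$ descendent Gromov--Witten invariants of $\Hb_m(\An)$, and the inductive hypothesis provided by Proposition \ref{divisibility} for all $(\beta',\chi')\le(\beta,\chi)$.

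For the twistors, I would first use the open immersion $U_{m,\beta,\chi}\hookrightarrow I_{\chi}(X,(\beta,m))^{\sim}$ together with Lemma \ref{hilbdtcompare} to reinterpret a connected subcurve of twistors as a chain-type stable map to $\Hb_m(\An)$, under which the DT obstruction theory becomes the Gromov--Witten one and the target cotangent-line descendents defining $R_e$ become the ordinary domain $\psi$-classes at the two marked points $0,\infty$. Thus $R_e$ is identified with a fixed-locus contribution to a $2$-pointed genus $0$ descendent Gromov--Witten integral of $\Hb_m(\An)$ in the nonzero curve class prescribed by \eqref{twistordegrees}. I would then appeal to the reduced virtual class: the holomorphic symplectic form on $\An$, of $T$-weight $-(t_1+t_2)$, induces one on $\Hb_m(\An)$, so the relevant Gromov--Witten virtual class factors as $-(t_1+t_2)$ times a reduced class; because the $\psi$-insertions enter polynomially and the residue integral against the reduced class is regular along $t_1+t_2=0$, the factor $(t_1+t_2)$ persists in $R_e$, giving positive valuation for every label $(\beta_e,\chi_e)$.

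For the skewers, a distinguished vertex $v\in S$ is a connected subcurve of skewers, that is, a $T$-fixed ideal sheaf on the \emph{rigid} relative threefold $\An\times\mathbf{P}^1$ carrying discrete invariants $(\beta_v,\chi_v)\le(\beta,\chi)$, with the descendents in $R_v$ arising only from target cotangent lines at the gluing nodes. I would apply the rigidification lemma and the $\psi$-removal lemmas of \cite{okpandt} to write $R_v$ as a combination, with coefficients regular along $t_1+t_2=0$, of relative DT descendent invariants $\langle\sigma_{k_1}(\gamma_{l_1})\dots|\overrightarrow{\nu_1},\dots,\overrightarrow{\nu_b}\rangle^{\DT}_{\beta',\chi'}$ with $(\beta',\chi')\le(\beta_v,\chi_v)\le(\beta,\chi)$; positive $(t_1+t_2)$-valuation of each term then follows from the assumed Proposition \ref{divisibility} when $\beta'\ne0$, and from the $\beta=0$ vanishing lemma above when $\beta'=0$. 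Summing over the decomposition gives the claim for $R_v$.

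I expect the twistor step to be the main obstacle, and specifically the passage to the reduced class: one must verify that the canonical identification of obstruction theories in Lemma \ref{hilbdtcompare} is compatible with the cotangent-line insertions and with the cosection-induced factorization of the virtual class, and that the resulting reduced Gromov--Witten integrals of $\Hb_m(\An)$ have no pole along $t_1+t_2=0$. Once the inductive hypothesis is available, the skewer step is essentially mechanical, the only delicate point being the $\psi$-removal bookkeeping, which is formal and identical to the treatment in \cite{okpandt}.
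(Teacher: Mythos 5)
Your overall architecture matches the paper's: for the twistors, compare $R_e$ with Hilbert scheme integrals via Lemma \ref{hilbdtcompare} and import a divisibility statement on the $\Hb_m(\An)$ side; for the skewers, use rubber calculus (rigidification and $\psi$-removal) together with the assumed Proposition \ref{divisibility} and an induction on discrete invariants. The skewer half of your argument is essentially the paper's (the paper phrases it as expressing $R_v$ through a rubber invariant with label $(\beta_v,\chi_v)$ plus skewer/twistor terms with strictly smaller invariants, which is the same bookkeeping), so no complaint there.

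The genuine gap is in the twistor step, and it sits exactly at the point you flag but do not resolve. The cosection/reduced-class argument coming from the holomorphic symplectic form gives $(t_1+t_2)$-divisibility of the \emph{total} two-point descendent invariant of $\Hb_m(\An)$; it does not by itself give positive valuation of an \emph{individual} fixed-locus contribution $R_e$. An edge of $\Gamma$ is by definition an unbroken chain: at every interior node the tangent weights satisfy $w_{P,s}+w_{P',s}\equiv 0 \pmod{t_1+t_2}$, so the node-smoothing factors $1/(w_{P,s}+w_{P',s})$ introduce honest poles along $t_1+t_2=0$ inside $R_e$; moreover, unlike the $\CC^2$ case, tangent weights of $\Hb_m(\An)$ at $T$-fixed points can themselves be proportional to $t_1+t_2$ (already $w^L_i+w^R_i=t_1+t_2$), so one cannot expect the reduced localization contributions to be regular along $t_1+t_2=0$ locus by locus, and the weight-$(-(t_1+t_2))$ trivial piece of the obstruction only guarantees cancellation after summing over all fixed loci. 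What the paper actually uses here is the $\An$-analogue of the input from \cite{okpanhilb} in \cite{okpandt}: the broken/unbroken analysis and the per-chain valuation statements proved in \cite{hilban} (the same results that later feed into Propositions \ref{vanishing} and \ref{exactevaluation}), applied through the identification of obstruction theories of Lemma \ref{hilbdtcompare}. So to close your argument you would either have to reprove those unbroken-chain divisibility statements (which is substantive, not a formal compatibility check with the cosection), or simply cite them as the paper does.
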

\begin{proof}
While the proof in \cite{okpandt} is given for $\CC^{2}$, the same argument applies without modification
to our situation.
For twistor contributions, the divisibility is obtained by using Lemma \ref{hilbdtcompare} to compare 
$R_{e}$ with Hilbert scheme integrals.  Instead of \cite{okpanhilb}, we of course apply divisibility statements from \cite{hilban} to get the analogous statement.  For skewer integrals $R_v$ with
 $(\beta_v,\chi_v) \leq (\beta, \chi)$,
 we express $R_v$ in terms of a rubber invariant with discrete invariants $(\beta_v,\chi_v)$
 and skewer/twistor integrals with lower discrete invariants.  Since the rubber integral 
 has positive valuation by our assumption of Proposition \ref{divisibility}, an inductive argument gives the claim.
 \end{proof}

\subsection{Proof of Lemma \ref{matrixdivisibility}}

We now finish the inductive proof of Lemma \ref{matrixdivisibility} for $(\beta,\chi)$ 
under the assumption of Proposition \ref{divisibility} for $(\beta^{'},\chi^{'})< (\beta,\chi)$.
Given 
\begin{equation}\label{matrixfixed}
\langle \sigma(\overrightarrow{\mu})| [J_{\overrightarrow{\rho}}]\rangle^{\DT}_{\beta,\chi},
\end{equation}
if we apply virtual localization with respect to $T \times \CC^{\ast}$, the fixed loci can be decomposed into an $\An$-box configuration in the bulk with discrete invariants $(\beta_1,\chi_1)$ and a 
fixed locus in the rubber moduli space, associated to a graph $\Gamma$, with discrete invariants
$(\beta - \beta_1, \chi -\chi_1 + m)$.
Using lemma \ref{skewtwist} and the equivariant vertex calculations of section \ref{equivariant vertex}, every
fixed locus contributes a term divisible by $t_1+t_2$ with the possible exception
of the case where $(\beta_1,\chi_1) = (0, m)$ and $\Gamma$ consists of a single skewer vertex
labelled with fixed-point $[J_{\overrightarrow{\rho}}]$ and discrete invariants $(\beta,\chi)$, although possibly with some cotangent lines on the skewer integral.

We can encode these skewer contributions in a matrix $\mathsf{D}$ indexed by multipartitions of $m$.  Since we only have a single skewer vertex, the relative conditions at each end of the rubber fixed locus always coincide, so $\mathsf{D}$ is a diagonal matrix (in the fixed-point basis).  As before, let $\mathsf{M}_{0}$ be the matrix with entries given by
$$\langle \sigma(\overrightarrow{\mu}) | [J_{\overrightarrow{\rho}}]\rangle ^{\DT}_{0, \chi=m},$$
which are given by the classical pairing on $\Hb_m(\An)$.
We have shown that the matrix given by (\ref{matrixfixed}) is given by
\begin{equation}\label{matrixfixed2}
\mathsf{M}_{\beta,\chi} = \mathsf{M}_{0} \circ \Delta \circ \mathsf{D} \mod (t_1+t_2),
\end{equation}
where $\Delta$ is the diagonal matrix encoding gluing relative conditions in the degeneration formula.

It suffices to show that the entries of $\mathsf{D}$ are divisible by $t_1+t_2$.  To do this, we study the matrix
$\mathsf{N}$ indexed by multipartitions with entries given by
$$\langle \sigma(\overrightarrow{\mu}), \sigma(\overrightarrow{\nu}) \rangle^{\DT}_{(\beta,m),\chi} = 0 \mod(t_1+t_2),$$
where the vanishing follows from Lemma~\ref{absolutedivisibility}.
By the degeneration formula, ignoring any terms with discrete invariants smaller than $(\beta,\chi)$,
we have
$$\mathsf{N} = \mathsf{M}_{\beta,\chi} \circ \Delta \circ \mathsf{M}_{0}^{tr} + \mathsf{M}_{0} \circ \Delta \circ \mathsf{M}_{\beta,\chi}^{tr}
\mod (t_1+t_2).$$
If we combine this with equation (\ref{matrixfixed2}), we have
$$0 = \mathsf{M}_{0} \circ \Delta \circ \mathsf{D} \circ \Delta \circ \mathsf{M}_{0}^{tr} + \mathsf{M}_{0} \circ \Delta \circ \mathsf{D} \circ \Delta \circ \mathsf{M}_{0}^{tr} \mod(t_1+t_2),$$
where we use the fact that $\mathsf{D}$ is a diagonal matrix.

Since $\mathsf{M}_{0}$ is invertible, this implies the vanishing of $\mathsf{D}$ and concludes the proof.

\section{Proof of Theorem \ref{dt theorem}}

The strategy of proof is the same as the operator calculations from \cite{hilban}.  In that paper,
it is shown that the operator $\Omega_{+}$ on Fock space is uniquely characterized by five intermediate properties, discussed below.  In fact, we give there
an inductive algorithm to reconstruct the operator from the five propositions below.
These properties will be established directly for $\Theta^{\DT}$ in the following series of propositions.

\subsection{List of propositions}\label{list of props}
The first proposition is a factorization statement that allows us to remove parts
 labelled with cohomology class $1$.

\begin{prop}\label{factorization}
We have the factorization
$$\langle \mu(1) \prod \lambda_{i}(\omega_{i})|\Theta^{\DT}| \nu(1) \prod \rho_{i}(\omega_{i})\rangle
= \langle \mu(1)|\nu(1)\rangle \cdot 
\langle \prod\lambda_{i}(\omega_{i})|\Theta^{\DT}| \prod \rho_{i}(\omega_{i})\rangle.$$
\end{prop}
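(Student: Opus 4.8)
The plan is to exploit two independent inputs: the divisibility statement of Proposition \ref{divisibility} (all DT invariants with $\beta \ne 0$ have positive $(t_1+t_2)$-valuation), and the explicit knowledge of the $\beta = 0$ contribution from \cite{okpandt}, together with the way parts labelled with the identity class $1$ decouple geometrically. Recall that $\Theta^{\DT}$ is defined purely from the $\beta \ne 0$ part of the rubber partition function, i.e. from $\langle \overrightarrow{\mu}, \overrightarrow{\nu}\rangle^{\DT,\sim}_{+}$. So I would first reduce the statement to a claim about $\langle \overrightarrow{\mu},\overrightarrow{\nu}\rangle^{\DT,\sim}_{\beta}$ for fixed $\beta \ne 0$, and then rigidify: using the rigidification lemma, $\sum_k s_k \partial_{s_k}$ applied to $\langle \overrightarrow{\mu},\overrightarrow{\nu}\rangle^{\DT,\sim}$ equals $\langle \overrightarrow{\mu}| \sigma_0(\delta_0)|\overrightarrow{\nu}\rangle^{\DT}$, so it is enough to establish the corresponding factorization for the non-rubber relative invariant $\langle \mu(1)\prod\lambda_i(\omega_i)| \sigma_0(\delta_0)| \nu(1)\prod\rho_i(\omega_i)\rangle^{\DT}_\beta$ (the operator $\sum s_k\partial_{s_k}$ is invertible on the $\beta\ne 0$ part since it acts by the nonzero scalar $(\sum_k \omega_k)\cdot\beta$). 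This matches the strategy the paper advertises — "most arguments are schematic" and cite \cite{okpandt} — so I expect the heart to be a degeneration/bumping argument for the $1$-labelled parts.

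The key geometric step is the standard fact that a part of a cohomology-weighted partition labelled by the fundamental class $1$ corresponds, on the DT side, to a "free" point that can be bumped off: degenerating $\mathbf{P}^1$ and using the degeneration formula, one isolates the contribution of the $1$-labelled parts into a factor living on a trivial rubber/relative piece that carries no curve class (any nonzero $\beta$ would be forced onto this trivial component, and by Proposition \ref{divisibility} such a contribution is divisible by $t_1+t_2$ — but we also need it to literally vanish, not just be divisible; here one uses that the $1$-labelled Nakajima operators commute with everything up to the central term and that on the trivial component there is genuinely nothing for them to interact with except the matching $1$-labelled part at the other end). Concretely, I would write the relative invariant as a sum over intermediate relative conditions $\overrightarrow{\kappa}$ via the degeneration formula, argue the curve class and the $\sigma_0(\delta_0)$ insertion must both go to one side, and on the other side only $\langle \mu(1)|\overrightarrow{\kappa}\rangle$-type classical pairings survive, which force $\overrightarrow{\kappa}$ to restrict to $\mu(1)$ on the $1$-part and pair the rest. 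Summing back gives the product $\langle\mu(1)|\nu(1)\rangle\cdot\langle\prod\lambda_i(\omega_i)|\Theta^{\DT}|\prod\rho_i(\omega_i)\rangle$.

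Alternatively — and this is probably cleaner, and is the form in which \cite{okpandt, hilban} phrase such results — one observes directly that the universal ideal sheaf on the rubber moduli space, restricted to configurations contributing to a $\beta\ne0$ invariant, splits $T$-equivariantly as (a piece supported near the $\beta$-carrying curve) $\oplus$ (a disjoint union of points in the trivial $\An\times\mathbf{P}^1$-direction governing the $1$-labelled insertions), because a $1$-labelled Nakajima class is represented by a genuinely free moving point on $\An$ and the holomorphic symplectic form gives the needed $t_1+t_2$-divisibility that kills all "cross terms". Then the obstruction theory and the boundary evaluation maps $\epsilon_0,\epsilon_\infty$ factor accordingly, and the residue integral factors as a product, the second factor being by definition $\langle\prod\lambda_i(\omega_i)|\Theta^{\DT}|\prod\rho_i(\omega_i)\rangle$ and the first being the purely classical (degree $0$) overlap $\langle\mu(1)|\nu(1)\rangle$ computed on $\Hb$. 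The main obstacle, as in the cited papers, is making the "disjoint splitting of the universal sheaf" precise on the non-isolated rubber fixed loci — controlling how the $1$-labelled points interact with the degeneration and $\CC^*$-scaling structure — but since this argument is formally identical to the $\CC^2$ case I would simply invoke the corresponding statement from \cite{okpandt} with the $t_1+t_2$-divisibility input now supplied by Proposition \ref{divisibility} and the analogous results in \cite{hilban}.
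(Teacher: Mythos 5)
Your opening reduction is the same as the paper's: since $\Theta^{\DT}$ only involves $\beta\ne 0$ and $\sum_k s_k\partial_{s_k}$ acts by the nonzero scalar $(\sum\omega_k)\cdot\beta$ there, the rigidification lemma reduces everything to the non-rubber relative invariants $\langle \mu(1)\prod\lambda_i(\omega_i)|\sigma_0(\delta_0)|\nu(1)\prod\rho_i(\omega_i)\rangle^{\DT}_{\beta}$. After that, however, neither of your two sketches supplies the actual mechanism, and both have genuine gaps. The degeneration/bumping argument cannot do what you want: a relative condition is a single cohomology class on $\Hb_m(\An)$ attached to one fiber, so degenerating the base $\mathbf{P}^1$ never separates the $1$-labelled parts of $\mu(1)\prod\lambda_i(\omega_i)$ from its $\omega$-labelled parts -- the whole class sits on one component and you sum over intermediate conditions $\overrightarrow{\kappa}$. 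Even if you force $\beta$ and $\sigma_0(\delta_0)$ to one side, the $\beta=0$ tube is the identity operator (Proposition \ref{tube}), so the degeneration formula just reproduces the original invariant; it yields a tautology, not the factorization. Your second sketch (a $T$-equivariant splitting of the universal ideal sheaf into a $\beta$-carrying piece and free points for the $1$-labels) is a heuristic whose hard part -- making the splitting precise on the non-isolated rubber fixed loci -- you explicitly defer to \cite{okpandt}; but no such statement is available there in the needed form (on $\CC^2$ every label is proportional to the identity class, so there is no analogue of ``removing $1$-labelled parts'' to import), so the citation does not close the gap.

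The paper's proof runs along a different and more elementary line that your proposal does not touch. One embeds $\An$ $T$-equivariantly in a projective toric surface $S$ chosen so that the classes $E_i$ stay rigid and independent of the boundary, and compares invariants of $\An\times\mathbf{P}^1$ with those of $S\times\mathbf{P}^1$ by localization: the compactified invariant decomposes into the $\An$-contribution plus terms where parts of $\mu,\nu$ sit over boundary charts $\CC^2_z$, where they contribute only classical pairings. By compactness the $S$-invariants are polynomials in $t_1,t_2$ of the expected equivariant degree, and combined with the $(t_1+t_2)$-divisibility of Proposition \ref{divisibility} they vanish once $l=\min(l(\mu),l(\nu))>0$ (the case $l=0$ is the degree bound alone). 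Inducting on $l$ and subtracting the identical localization expansion of the classical Poincar\'e pairing $\langle\mu(1)|\nu(1)\rangle^{S}=0$ on $\Hb(S)$ produces exactly the claimed factorization. So the key missing ideas in your proposal are the toric compactification, the vanishing-for-dimension-reasons argument on the compact side, and the induction on $\min(l(\mu),l(\nu))$ played off against the classical pairing; without them your argument does not go through.
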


The next two statements are valid for invariants with only divisor labels.

\begin{prop}\label{dimension}
The coefficients of 
$$\langle \prod\lambda_{i}(\omega_{i})|\Theta^{\DT}| \prod \rho_{i}(\omega_{i})\rangle\in\QQ(t_1,t_2)((q))[[s_{1},\dots,s_{n}]]$$
are of the form $\gamma(t_1+t_2)$, $\gamma \in \QQ$.
\end{prop}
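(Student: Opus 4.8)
The plan is to combine the $(t_1+t_2)$-divisibility from Proposition \ref{divisibility} with a dimension count showing that the rubber invariants appearing in $\langle \prod\lambda_i(\omega_i)|\Theta^{\DT}|\prod\rho_i(\omega_i)\rangle$ carry at most a single factor of $(t_1+t_2)$ in the numerator and no poles along $t_1+t_2$. First I would recall that by definition $\langle\overrightarrow\mu|\Theta^{\DT}|\overrightarrow\nu\rangle = q^{-m}\langle\overrightarrow\mu,\overrightarrow\nu\rangle^{\DT,\sim}_{+}$, a sum over $\beta\ne 0$ of residue integrals on the compact $T$-fixed locus of the rubber moduli space $I_\chi(X,(\beta,m))^{\sim}$. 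By Proposition \ref{divisibility} (the rubber case), each coefficient is divisible by $(t_1+t_2)$, i.e.\ has strictly positive valuation. So it remains to bound the valuation above by $1$, equivalently to show that after dividing by $(t_1+t_2)$ the resulting expression has no zero and no pole along $t_1+t_2=0$; since the whole object lies in $\QQ(t_1,t_2)$, it is then forced to be a constant multiple of $(t_1+t_2)$.

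For the upper bound I would argue by a weight/degree count on the $T$-equivariant residue. The rubber obstruction theory has virtual rank $2m-1$, and after imposing the two relative conditions $\prod\lambda_i(\omega_i)$ and $\prod\rho_i(\omega_i)$ — each of which is a product of Nakajima divisor insertions, of cohomological degree summing to something comparable to $m$ — the integrand $\epsilon_0^*(\overrightarrow\mu)\epsilon_\infty^*(\overrightarrow\nu)/e(\mathrm{Norm}^{\vir})$ has a total $T$-weight that is \emph{homogeneous of degree $1$} in $(t_1,t_2)$ once one accounts for the shift by $\chi$ via $q^{-m}$ and the fact that the Poincaré pairing on $\Hb_m(\An)$ contributes degree $-2m$ while the class $\prod\lambda_i(\omega_i)$ has degree... — here I would carry out the bookkeeping: the labels $\omega_i\in H^2_T(\An)$ have cohomological degree $2$, there are $\sum l(\lambda_i)$ of them with a further shift from the parts, so $\deg\overrightarrow\mu + \deg\overrightarrow\nu - \dim_{\mathbb C}[\text{fixed locus}] - (\text{rk of }\mathrm{Norm}^{\vir})$ works out to exactly $1$. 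Combined with the fact (noted at the end of section \ref{Nakajima ops}, and for the virtual class in section \ref{geometry}) that everything is defined over $R=\QQ[t_1,t_2]_{(t_1+t_2)}$, so there are no poles along $t_1+t_2$, a homogeneous degree-$1$ element of $\QQ(t_1,t_2)$ with no pole at $t_1+t_2=0$ and divisible by $t_1+t_2$ must equal $\gamma\cdot(t_1+t_2)$ for some $\gamma\in\QQ$. (The constancy of $\gamma$, i.e.\ its independence of $t_1-t_2$, follows from homogeneity of degree $1$ together with symmetry in $t_1,t_2$ — the only degree-$1$ symmetric polynomials are multiples of $t_1+t_2$.)

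The step I expect to be the main obstacle is the dimension/weight count itself: one must check carefully that imposing \emph{only divisor labels} $\omega_i$ (as opposed to general cohomology-weighted partitions) is exactly what pins the homogeneous degree to $1$ rather than something larger, and that the $q^{-m}$ normalization and the restriction to $\beta\ne 0$ do not introduce extra factors. The subtlety is that the rubber integral is a sum over all $\chi$, so the degree-$1$ claim must hold uniformly in $\chi$; this uses that changing $\chi$ only changes the power of $q$ and the twistor/skewer degrees via \eqref{twistordegrees}, not the cohomological degree of the integrand. Once this homogeneity is established, the conclusion is immediate, so the content is entirely in the bookkeeping. I would present the count by localizing to the $T$-fixed locus and using the skewer/twistor decomposition of section \ref{geometry} to reduce to the $\CC^2$ case, where the analogous degree count is carried out in \cite{okpandt, hilban}, invoking those computations rather than redoing them.
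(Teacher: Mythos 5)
Your outline gets the two correct ingredients (positive $(t_1+t_2)$-valuation from Proposition \ref{divisibility}, and the homogeneity count: insertion degree $2m$ against virtual rank $2m-1$ gives homogeneous degree $1$), but the concluding inference has a genuine gap. Homogeneity of degree $1$, divisibility by $t_1+t_2$, and absence of a pole along $t_1+t_2=0$ do \emph{not} force the answer to be $\gamma\,(t_1+t_2)$ with $\gamma\in\QQ$: the function $(t_1+t_2)\,t_1/t_2$ satisfies all three conditions. The equivariant residue over the noncompact rubber geometry is a priori only a rational function and can have poles along other weight lines ($t_1$, $t_2$, $at_1+bt_2$, \dots), and nothing in your argument excludes these; the remark that the Fock-space structures are defined over $\QQ[t_1,t_2]_{(t_1+t_2)}$ concerns the Nakajima basis and the pairing, not the DT residues themselves. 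The symmetry fallback does not repair this: swapping $t_1\leftrightarrow t_2$ is not a symmetry fixing the insertions (it reverses the chain of exceptional curves, permuting the $\omega_i$ and the $s_i$), and even granting symmetry, a symmetric homogeneous degree-$0$ rational function such as $t_1t_2/(t_1+t_2)^2$ need not be constant. Finally, the proposed reduction to $\CC^2$ via the skewer/twistor decomposition cannot supply what is needed here, since the statement concerns $\beta\neq 0$ classes on $\An$, which have no counterpart in the $\CC^2$ theory.

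The missing idea is the paper's compactification step, which is exactly what supplies polynomiality. After rigidifying with a $\sigma_0(\delta_0)$ insertion (so that one works with non-rubber relative invariants), the paper embeds $\An$ $T$-equivariantly in a projective toric surface $S$ and observes that, for $\beta\neq 0$ supported on the $E_i$ and insertions labeled only by the $\omega_i$, every localization contribution to the $S\times\mathbf{P}^1$ invariant is forced to lie along $\An$, so
$$\langle \textstyle\prod\lambda_i(\omega_i)\,|\,\sigma_0(\delta_0)\,|\,\prod\rho_i(\omega_i)\rangle^{\DT,S}_{\beta}
=\langle \textstyle\prod\lambda_i(\omega_i)\,|\,\sigma_0(\delta_0)\,|\,\prod\rho_i(\omega_i)\rangle^{\DT,\An}_{\beta}.$$
Because the moduli spaces for the compact target $S\times\mathbf{P}^1$ are proper, the left-hand side lies in $\QQ[t_1,t_2]$, and the dimension formula makes it a linear polynomial; combining with the $(t_1+t_2)$-divisibility of the right-hand side then yields $\gamma\,(t_1+t_2)$, $\gamma\in\QQ$. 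In short, the upper bound on the valuation comes from a properness statement, not from a weight count on the noncompact residue; without that input (or some substitute controlling poles away from $t_1+t_2$) your argument does not close.
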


For the next two propositions, we fix $1 \le i < j < n+1$ and isolate the contribution
$$\Theta^{\DT}_{[i,j]}$$
associated to the monomials
$$q^{a}s_{i}^{b_{i}}\cdot\dots s_{j-1}^{b_{j-1}}$$
with $b_{i},b_{j-1} \ne 0.$  Geometrically, this corresponds to curve classes 
$\beta = b_{i}E_{i}+\dots+b_{j-1}E_{j-1}.$
We first give a vanishing statement $\mod(t_1+t_2)^2$ 
for certain matrix elements of $\Theta^{\DT}_{[i,j]}$.
More precisely, we claim that the rational functions involved have valuation at least $2$ 
with respect to $t_1+t_2$.

\begin{prop}\label{vanishing}
Given two distinct $(n+1)$-tuples of partitions $\overrightarrow{\lambda}
\neq \overrightarrow{\eta}$ such that either $|\lambda_{i}| = |\eta_{i}|$ or
$|\lambda_{j}| = |\eta_{j}|$ then
$$
\langle  [J_{\overrightarrow{\lambda}}]|\Theta^{\DT}_{[i,j]}| [J_{\overrightarrow{\eta}}]\rangle = 0
\mod (t_1+t_2)^{2}.
$$
\end{prop}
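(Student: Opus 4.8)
The plan is to evaluate $\langle [J_{\overrightarrow{\lambda}}]|\Theta^{\DT}_{[i,j]}|[J_{\overrightarrow{\eta}}]\rangle$ by $T$-equivariant virtual localization on the rubber moduli space and to show that every fixed-locus contribution is separately divisible by $(t_1+t_2)^2$. By definition this matrix element is $q^{-m}$ times the part of $\langle [J_{\overrightarrow{\lambda}}],[J_{\overrightarrow{\eta}}]\rangle^{\DT,\sim}_{+}$ supported on curve classes $\beta=b_iE_i+\dots+b_{j-1}E_{j-1}$ with $b_i,b_{j-1}\ge 1$, so it suffices to fix such a $(\beta,\chi)$ and bound the $(t_1+t_2)$-valuation of $\langle [J_{\overrightarrow{\lambda}}],[J_{\overrightarrow{\eta}}]\rangle^{\DT,\sim}_{\beta,\chi}$ below by $2$.

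First I would use the skewer/twistor description: each connected component of $I_\chi(X,(\beta,m))^{T,\sim}$ carries a labelled chain graph $\Gamma\in G_{\chi,\beta}(\overrightarrow{\lambda},\overrightarrow{\eta})$, and its contribution factors as $\prod_{v\in S}R_v\cdot\prod_{e\in E}R_e$ over the skewer vertices and twistor edges of $\Gamma$. Since Proposition~\ref{divisibility} is now established, Lemma~\ref{skewtwist} applies and each factor $R_v$, $R_e$ has positive $(t_1+t_2)$-valuation. Hence any $\Gamma$ with $|S|+|E|\ge 2$ contributes a term divisible by $(t_1+t_2)^2$, and the question reduces to graphs consisting of a single skewer vertex or a single twistor edge.

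A single-skewer graph restricts on the two distinguished fibers over $0$ and $\infty$ to the \emph{same} $T$-fixed point of $\Hb_m(\An)$, so it occurs only for $\overrightarrow{\lambda}=\overrightarrow{\eta}$, which is excluded by hypothesis. The essential case is a single twistor edge. There, pulling back the universal ideal sheaf embeds the fixed locus in the chain-domain open set $U_{m,\beta,\chi}\subset\overline{M}_{0,\{0,\infty\}}(\Hb_m(\An),\gamma)$, with $\gamma$ the class of surface part $\beta$ joining $[J_{\overrightarrow{\lambda}}]$ and $[J_{\overrightarrow{\eta}}]$; by Lemma~\ref{hilbdtcompare} the twistor contribution $R_e$ equals the corresponding $T$-localization contribution to the genus-$0$, two-pointed Hilbert-scheme invariant $\langle [J_{\overrightarrow{\lambda}}],[J_{\overrightarrow{\eta}}]\rangle^{\Hb}$ in class $\gamma$. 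Geometrically, $\beta$ records the net transfer of points among the fixed points $p_1,\dots,p_{n+1}$ of $\An$; as $\beta$ has no $E_{i-1}$ or $E_j$ component while $b_i,b_{j-1}\ge 1$, the generic such transfer changes both the $i$-th and $j$-th parts of the underlying multipartition as one moves along $\gamma$, so the hypothesis $|\lambda_i|=|\eta_i|$ or $|\lambda_j|=|\eta_j|$ forces extra cancellation. The precise input I would cite is the Hilbert-scheme analogue of the present statement — that every chain-domain localization contribution to $\langle [J_{\overrightarrow{\lambda}}],[J_{\overrightarrow{\eta}}]\rangle^{\Hb}$ in such a class is $(t_1+t_2)^2$-divisible under this hypothesis — which is one of the five characterizing properties of $\Omega_+$ proved in \cite{hilban}. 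Combining the cases then gives the proposition.

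The hard part is this single-twistor case. The first-order divisibility $(t_1+t_2)\mid R_e$ is automatic (it comes, as in Proposition~\ref{divisibility}, from the holomorphic symplectic form); the second order is more delicate, being sensitive to multiple covers of $T$-invariant curves in $\Hb_m(\An)$, and it is exactly for those contributions that the matching-of-sizes hypothesis on $\overrightarrow{\lambda},\overrightarrow{\eta}$ is needed to push the valuation up to $2$, via the computations of \cite{hilban}.
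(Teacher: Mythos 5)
Your reduction is exactly the paper's: localize the rubber integral, use the skewer/twistor graph decomposition and Lemma \ref{skewtwist} (now that Proposition \ref{divisibility} is available) to discard every graph with at least two skewer/twistor factors, note that a lone skewer forces $\overrightarrow{\lambda}=\overrightarrow{\eta}$, and then compare the remaining single-twistor contributions with the Hilbert scheme via Lemma \ref{hilbdtcompare}. The problem is the input you invoke at that last, crucial step. What you cite --- ``every chain-domain localization contribution to $\langle [J_{\overrightarrow{\lambda}}],[J_{\overrightarrow{\eta}}]\rangle^{\Hb}$ in such a class is $(t_1+t_2)^2$-divisible, which is one of the five characterizing properties of $\Omega_+$'' --- is not what is proved in \cite{hilban} in that form. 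The characterizing property of $\Omega_+$ is a statement about the \emph{summed} two-point invariant; per-fixed-locus divisibility does not follow formally from it, since a priori individually non-divisible unbroken contributions could cancel in the sum. So as written your argument has a hole exactly where the hypothesis $|\lambda_i|=|\eta_i|$ or $|\lambda_j|=|\eta_j|$ must enter.

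The fact that actually closes the argument (and is what the paper uses) is Proposition~4.9 of \cite{hilban}: under that size hypothesis there are \emph{no} unbroken $T$-fixed maps at all connecting $J_{\overrightarrow{\lambda}}$ to $J_{\overrightarrow{\eta}}$ with curve class supported on $[i,j]$. A single-twistor graph corresponds precisely to such an unbroken map (chain domain, node weights summing to $0$ mod $t_1+t_2$), so these loci are empty and their contribution is zero outright --- no second-order weight analysis is needed. Your closing paragraph, anticipating a delicate multiple-cover computation to upgrade the valuation from $1$ to $2$, indicates you did not identify this vacuity mechanism; with the correct citation in place of your claimed input, the rest of your argument goes through and coincides with the paper's proof.
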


We have two computations of invariants.  We first define the following fixed points 
on $\Hb_{m}(\An)$, defined by
the multipartitions $\overrightarrow{\rho},
\overrightarrow{\theta},\overrightarrow{\kappa},\overrightarrow{\sigma}$:
\begin{gather*}
\rho_i=(m),\quad \rho_k=\emptyset, \mbox{ if } k\ne  i,\\
\theta_i=(1^m), \quad \theta_k=\emptyset, \mbox{ if } k\ne i,\\
\kappa_i=(m-1),\quad \kappa_j=(1),\quad\kappa_k=\emptyset,\mbox{
if } k\ne i,j,\\
\sigma_i=(1^{m-1}),\quad \sigma_j=(1),
\quad\sigma_k=\emptyset,\mbox{ if } k\ne i,j.
\end{gather*}

\begin{prop}\label{exactevaluation}
For these special two-point correlators we have following expression modulo 
$(t_1+t_2)^2$
\begin{gather*}
\langle
[J_{\overrightarrow{\rho}}]|\Theta^{\DT}_{[i,j]}|[J_{\overrightarrow{\kappa}}]\rangle=(-1)^{m-1}(t_1+t_2)
((n+1) t_1)^{2m}\frac{(m!)^2}{m}
\log(1-(-q)^{m-1}s_{ij}),\\
\langle
 [J_{\overrightarrow{\theta}}]|\Theta^{\DT}_{[i,j]}|[J_{\overrightarrow{\sigma}}]\rangle=
(-1)^{m-1}(t_1+t_2)((n+1) t_1)^{2m}\frac{(m!)^2}{m}
\log(1-(-q)^{-m+1}s_{ij}),
\end{gather*}
where $s_{ij}=s_i\cdot\dots \cdot s_{j-1}$ and the logarithm is expanded
with non-negative exponents in $s_{k}$.
\end{prop}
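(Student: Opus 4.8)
The strategy is to compute both two-point correlators by localization, reducing them to rubber integrals on $\An \times \mathbf{P}^1$ where the relevant fixed loci are controlled by the skewer/twistor decomposition of Section~3. Because we work modulo $(t_1+t_2)^2$, Lemma~\ref{skewtwist} kills any contribution containing a twistor edge, so only single-skewer graphs survive; this collapses the localization sum to a manageable expression. First I would use the rigidification lemma to pass from $\Theta^{\DT}$ to descendent relative invariants, and then isolate the $[i,j]$-part, so that the only curve classes entering are $\beta = b_i E_i + \dots + b_{j-1}E_{j-1}$ with $b_i, b_{j-1} \neq 0$; by effectivity this forces $\beta = \alpha_{ij}$ up to multiplicity, and the corresponding subscheme must be supported on the chain of $T$-fixed curves $E_i, \dots, E_{j-1}$ connecting $p_i$ to $p_j$.

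\textbf{Key steps.} (1) Fix the $T$-weights: at $p_i$ the ideal is $I_{\rho^i}$ with $\rho^i = (m)$ or $(1^m)$, and similarly at $p_j$; using the weight formulas $w^L_i, w^R_i$ and the normalization $s_{ij} = s_i \cdots s_{j-1}$, one checks that modulo $(t_1+t_2)$ the relevant tangent weights along $\An$ degenerate to multiples of $(n+1)t_1$, which is where the factor $((n+1)t_1)^{2m}$ comes from. (2) Identify the box configurations: the condition $\rho^i = (m)$, $\kappa^j = (1)$ (resp.\ $\theta^i = (1^m)$, $\sigma^j = (1)$) pins down a ``rail'' of $m$ boxes running along the chain, and one enumerates these configurations, each contributing a monomial $(-q)^{\pm(m-1)}s_{ij}$ corresponding to the Euler characteristic shift across the chain. (3) Sum the geometric-series contributions: the $\frac{(m!)^2}{m}$ arises as $(m!)^2$ from the two monomial ideals' automorphism/Hilbert factors together with a $\frac{1}{m}$ from the $k = \pm(m-1)$ term in a logarithm-type sum $\sum_k \frac{1}{k}(\cdot)^k$, yielding $\frac{1}{m}\log(1-(-q)^{\pm(m-1)}s_{ij})$ after resummation. (4) Check the $(t_1+t_2)$-divisibility: Proposition~\ref{divisibility} gives one factor of $(t_1+t_2)$ for $\beta \neq 0$; the point is that these particular correlators are \emph{not} divisible by $(t_1+t_2)^2$, so the leading coefficient is exactly what is stated. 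The sign $(-1)^{m-1}$ tracks the orientation of the chain and the $q = -e^{iu}$-type conventions already fixed in the paper.

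\textbf{Main obstacle.} The delicate part is step~(2)--(3): carefully enumerating exactly which $T$-fixed ideal sheaves on the chain $E_i, \dots, E_{j-1}$ contribute at first order in $(t_1+t_2)$, and tracking the combinatorial bookkeeping of box counts, Euler characteristics, and equivariant weights so that the contributions assemble into a single logarithm with the precise prefactor $(-1)^{m-1}\frac{(m!)^2}{m}((n+1)t_1)^{2m}$. In particular one must verify that cross-terms from configurations with boxes distributed nontrivially among the intermediate fixed points $p_{i+1}, \dots, p_{j-1}$ either vanish modulo $(t_1+t_2)^2$ or cancel; this is where a direct but somewhat intricate localization computation, modeled on the $\CC^2$ analysis in \cite{okpandt} and the Hilbert scheme computation in \cite{hilban}, is unavoidable. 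The reduction arguments (rigidification, isolating $\Theta^{\DT}_{[i,j]}$, killing twistor edges) are formal and follow the cited references without modification; the residual explicit evaluation on the simplest chain geometry is the crux.
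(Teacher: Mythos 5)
Your reduction of the localization sum is inverted, and this is a genuine gap. Lemma \ref{skewtwist} says that \emph{both} twistor contributions and (nonzero-degree) skewer contributions carry a factor of $t_1+t_2$; it does not kill twistor edges in favor of skewers. Since the two boundary conditions $[J_{\overrightarrow{\rho}}]\neq[J_{\overrightarrow{\kappa}}]$ (resp.\ $[J_{\overrightarrow{\theta}}]\neq[J_{\overrightarrow{\sigma}}]$) are distinct fixed points, a graph built from skewers alone cannot contribute at all: a skewer has the same fixed point of $\Hb_m(\An)$ at both ends, so any chain of skewers preserves the boundary condition. Every contributing graph therefore contains at least one twistor edge, and working modulo $(t_1+t_2)^2$ the only survivors are graphs consisting of a \emph{single} twistor edge, i.e.\ unbroken $T$-fixed maps to $\Hb_m(\An)$ — exactly the opposite of your "only single-skewer graphs survive." As a result, the explicit box-configuration enumeration you propose in steps (2)–(3) (rails of boxes along $E_i,\dots,E_{j-1}$, modeled on the minimal configurations $H^d_{a,b}$) is not the computation that arises here; that style of direct equivariant-vertex analysis is needed in this paper only for the vacuum expectation (Proposition \ref{vacuum}), which is flat of these relative insertions and cannot be reduced to the Hilbert scheme.

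The actual argument is a transfer, not a fresh localization: after discarding skewers and multiple edges via Lemma \ref{skewtwist}, one invokes the unbroken-map lemma from \cite{hilban} (Lemma 4.11 there), which shows that an unbroken map joining these two fixed points exists only for $\gamma$ with $(1,\omega_l)\cdot\gamma=d$ for all $i\le l\le j-1$ — hence $\beta=d\alpha_{ij}$ — and is then unique. Note that your appeal to effectivity does not give this: classes like $2E_i+E_{i+1}+\cdots+E_{j-1}$ have support $[i,j]$ without being multiples of $\alpha_{ij}$; the equality of coefficients is a consequence of unbrokenness, not positivity. Then Lemma \ref{hilbdtcompare} (canonical isomorphism of the GW and DT obstruction theories on the locus of chain-domain stable maps) identifies the contribution of this unique graph with the corresponding two-point contribution on $\Hb_m(\An)$, whose value $(-1)^{m-1}(t_1+t_2)((n+1)t_1)^{2m}\frac{1}{d}\frac{(m!)^2}{m}(-1)^{md+m+d}q^{md+m-d}$ is quoted from Proposition 4.3 of \cite{hilban}. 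In particular the factor $\frac{(m!)^2}{m}$ and $((n+1)t_1)^{2m}$ come from that Hilbert-scheme evaluation, the logarithm comes from summing $\frac{1}{d}(\cdot)^d$ over the multiple-cover degree $d$ (not from a "$k=\pm(m-1)$ term"), and the exponent $m-1$ of $-q$ records the discrepancy between the $D$-degree and the Euler characteristic in \eqref{twistordegrees}. Without these ingredients — and with the skewer/twistor roles reversed — your outline would not produce the stated formula.
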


Finally, we specify the action of $\Theta^{\DT}$ on the vacuum vector $|\emptyset\rangle$.
\begin{prop}\label{vacuum}  % vanishing?
The vacuum expectation is given by
$$\langle \emptyset| \Theta^{\DT}|\emptyset\rangle = 
(t_1+t_2)\sum_{1\le i< j \le n+1} F(q, s_{i}\dots s_{j-1})$$
where
$$F(q,s) = \sum_{k \geq 0}(k+1)\log(1 - (-q)^{k+1}s).$$
\end{prop}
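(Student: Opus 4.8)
The plan is to evaluate $\langle\emptyset|\Theta^{\DT}|\emptyset\rangle$ directly from the definition. Here $m=0$, so $\langle\emptyset|\Theta^{\DT}|\emptyset\rangle=\langle\emptyset,\emptyset\rangle^{\DT,\sim}_{+}=\sum_{\beta\ne 0,\chi}q^{\chi}s^{\beta}\langle\emptyset,\emptyset\rangle^{\DT,\sim}_{\beta,\chi}$, the rubber DT theory of $\An\times\mathbf{P}^{1}$ with no relative insertions, summed over nonzero curve classes; unlike the other propositions of section~\ref{list of props}, this one sits at the bottom of the reconstruction and cannot be reduced via Proposition~\ref{factorization}, so it must be computed from scratch. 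As a consistency check on the claimed formula, observe that $|\emptyset\rangle$ is the highest weight vector $v_{\Lambda}$ of $V_{\Lambda}$, hence is killed by $e_{ij}(k)$ for all $k\ge 1$ and by $e_{ij}(0)$ for $i<j$; going through the summands of $\Omega_{+}$ term by term then gives $\langle\emptyset|\Omega_{+}|\emptyset\rangle=0$, so on the vacuum only the $F\cdot\mathrm{Id}$ summand of the formula in Theorem~\ref{dt theorem} can survive, and Proposition~\ref{vacuum} is precisely the assertion that it does so with coefficient $(t_{1}+t_{2})$.

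The substantive step is to reduce the $m=0$ rubber theory to a local curve computation. Since $m=0$, a contributing ideal sheaf is that of a proper $1$-dimensional $Z\subset\An\times\mathbf{P}^{1}$ of class $(\beta,0)$ disjoint from both relative divisors, so every component of $Z$ lies in a single fiber $\An\times\{z\}$ with $z\ne 0,\infty$; the $\CC^{\ast}$-scaling of the rubber target acts transitively on these positions. I would rigidify the fiber position using the rubber calculus of section $4.9$ of \cite{okpandt}, identifying $I_{\chi}(\An\times\mathbf{P}^{1},(\beta,0))^{\sim}$ together with its $T$-equivariant obstruction theory with a moduli space of $1$-dimensional subschemes of the surface $\An$, and then localize the residual $T$-action on $\An$. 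The $T$-fixed curves of nonzero class are exactly the chains $E_{i}\cup\dots\cup E_{j-1}$ in the positive root classes $\alpha_{ij}$, and a dimension count shows $\langle\emptyset,\emptyset\rangle^{\DT,\sim}_{\beta}$ vanishes unless $\beta=d\,\alpha_{ij}$ is a positive multiple of a single positive root; this produces the additive structure $\sum_{1\le i<j\le n+1}$ of the answer. For $\beta=d\,\alpha_{ij}$ the fixed-locus contributions assemble into the $T$-equivariant local DT theory of the corresponding chain of $(-2)$-curves inside $\An\times\mathbf{P}^{1}$ — a local $(0,-2)$-curve theory of exactly the type computed in \cite{okpandt} (and recoverable from the vertex/edge formalism of section~\ref{equivariant vertex}, cf. \cite{mnop1}). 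Reading off the value and resumming gives $(t_{1}+t_{2})\,F(q,s_{i}\cdots s_{j-1})$ for each root, where the elementary identity $F(q,s)=\sum_{k\ge 0}(k+1)\log(1-(-q)^{k+1}s)=-\sum_{d\ge 1}\tfrac{s^{d}}{d}\cdot\tfrac{(-q)^{d}}{(1-(-q)^{d})^{2}}$ matches the output of the local partition function.

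I expect the main obstacle to be the reduction step: making precise the $m=0$ rubber geometry and verifying that rigidifying the $\CC^{\ast}$-scaling is compatible with the virtual normal bundle, so that the residue integral passes cleanly to the two-dimensional theory over $\An$ — including tracking which equivariant factors gain and which lose a factor of $t_{1}+t_{2}$, which is what makes the overall $(t_{1}+t_{2})$ appear. Once this is in place, the vanishing for $\beta$ not a multiple of a root, the evaluation of the single-root local contribution, and the final resummation into $F$ are all routine, following \cite{okpandt} and section~\ref{equivariant vertex} without essential change.
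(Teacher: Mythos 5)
The gap is in your central reduction step. Vanishing of the degree over $\mathbf{P}^1$ only forces the \emph{reduced} one-dimensional support of $Z$ to lie in fibers $\An\times\{z\}$; it does not make $Z$ a subscheme of a surface. Points of $I_{\chi}(X,(\beta,0))^{\sim}$ can, and do, carry scheme structure thickened along the $\mathbf{P}^1$-direction as well as embedded points, so there is no identification of the $m=0$ rubber moduli space, with its obstruction theory, with a moduli space of one-dimensional subschemes of $\An$ --- indeed the paper states explicitly that the vacuum expectation is the one computation that cannot be reduced to surface/Hilbert-scheme geometry. Concretely, the fixed loci that survive mod $(t_1+t_2)^2$ in the paper's proof are the configurations $H^{d}_{a,b}$: the width-one chain over $E_i\cup\dots\cup E_{j-1}$ repeated in $d$ identical slices along the $\mathbf{P}^1$-direction, with $a$ and $b$ extra boxes at the two ends. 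The thickening parameter $d$ and the box parameters $a,b$ are exactly what produce the sum $\sum_{a,b\ge 0}q^{d(1+a+b)}(-1)^{d(1+a+b)+1}$, i.e.\ the factor $(-q)^{d}/(1-(-q)^{d})^{2}$ whose resummation gives the multiplicities $(k+1)$ in $F$; a genuinely two-dimensional moduli problem over $\An$ would not see these contributions, so the step you flag as ``the main obstacle'' is not a technical compatibility to verify but a reduction that fails. Relatedly, your claim that ``a dimension count'' kills all $\beta$ that are not of the form $d\alpha_{ij}$ is not an argument: classes such as $E_1+E_3$ or $2E_1+E_2$ have plenty of $T$-fixed representatives, and in the paper they are excluded because every box configuration in such a class has $(t_1+t_2)$-multiplicity at least two (each connected piece of the curve support forces at least one factor, and multiplicity-one configurations force all $t_3$-slices to equal the same width-one chain, hence equal coefficients), which combined with Proposition \ref{dimension} gives the vanishing.

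For comparison, the paper's route is: rigidify the rubber integral with a $\sigma_{0}(\delta_{0})$ descendent insertion (not by fixing a fiber position), use the degeneration formula together with the cap evaluation of Proposition \ref{cap evaluation} to replace the relative invariant by the absolute one mod $(t_1+t_2)^{2}$, and then localize on the threefold with respect to the full torus $T\times\CC^{\ast}$, enumerating the minimal-multiplicity box configurations $H^{d}_{a,b}$ via the rank analysis of section \ref{equivariant vertex} and computing their vertex and edge weights explicitly, with the answer extracted at $t_3=0$. Your consistency check that $\langle\emptyset|\Omega_{+}|\emptyset\rangle=0$, the identification of the contributing classes $d\alpha_{ij}$, and the resummation identity $F(q,s)=-\sum_{d\ge1}\frac{s^{d}}{d}\cdot\frac{(-q)^{d}}{(1-(-q)^{d})^{2}}$ are all correct and consistent with the paper's intermediate lemma, but the path to the per-root evaluation has to go through this threefold vertex analysis rather than a two-dimensional theory over $\An$.
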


\subsection{Reconstruction}

In \cite{hilban}, we prove the following proposition in sections $4$ and $5$.
\begin{prop}\label{reconstruction}
The operator $(t_1+t_2)\Omega_{+}(q,s_{1},\dots,s_{n})$ satisfies
the claims of Propositions \ref{factorization},\ref{dimension},\ref{vanishing},
and \ref{exactevaluation} and has
vacuum expectation
$$\langle \emptyset| (t_1+t_2)\Omega_{+}| \emptyset\rangle = 0.$$
  It is the unique graded operator-valued power series in $q,s_{1},\dots,s_{n}$
  on $\mathcal{F}_{\An}$ which has these properties.
 \end{prop}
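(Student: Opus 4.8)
The plan is to split the statement into its two halves — that $(t_1+t_2)\Omega_{+}$ enjoys the four structural properties of Propositions \ref{factorization}–\ref{exactevaluation} together with the vanishing of its vacuum expectation, and that these properties pin it down uniquely — and to argue both entirely inside the basic representation $V_\Lambda$ of $\gotg=\widehat{\mathfrak{gl}}(n+1)$, via the identification $\mathcal{F}_{\An}\otimes\QQ(t_1,t_2)\cong\bigoplus_{m}V_\Lambda[\Lambda-m\delta]$. The point to exploit throughout is that each summand $:e_{ji}(k)e_{ij}(-k):$ of $\Omega_{+}$ is an honest $\QQ$-linear endomorphism of $V_\Lambda$, so $\Omega_{+}$ is a $\QQ[[q,s_1,\dots,s_n]]$-linear combination of explicit operators whose action on the Nakajima and the fixed-point bases is computable by Nakajima's formulas \cite{nakajima}.

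For the first half I would verify the properties one by one. Factorization (Prop.\ \ref{factorization}): under the Heisenberg embedding the class $1\in H^*_T(\An)$ corresponds to the central direction $\sum_a e_{aa}(k)$ of $\mathfrak{gl}(n+1)$, and since $\mathrm{Id}$ is central and $\mathrm{tr}(\mathrm{Id}\cdot e_{ji})=\delta_{ij}=0$ for $i<j$, each $\mathfrak{p}_k(1)$ commutes with every operator appearing in $\Omega_{+}$; hence the parts labelled by $1$ split off as a free tensor factor. Vacuum: after normal ordering, every summand $:e_{ji}(k)e_{ij}(-k):$ with $i<j$ has, as its right-hand operator, either $e_{ij}(-k)$ with $-k>0$, or $e_{ij}(0)$ with $i<j$, or $e_{ji}(k)$ with $k>0$, each of which is a positive-root/annihilation operator killing the highest weight vector; so $\Omega_{+}|\emptyset\rangle=0$ and in particular $\langle\emptyset|(t_1+t_2)\Omega_{+}|\emptyset\rangle=0$. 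Dimension (Prop.\ \ref{dimension}): because $\Omega_{+}$ is defined over $\QQ$ and the $\{1,\omega_1,\dots,\omega_n\}$-Nakajima basis is related to a rational basis of $V_\Lambda$ by $t$-independent rescalings, the entries of $\langle\prod\lambda_i(\omega_i)|\Omega_{+}|\prod\rho_i(\omega_i)\rangle$ lie in $\QQ$. Vanishing (Prop.\ \ref{vanishing}) and exact evaluation (Prop.\ \ref{exactevaluation}): these reduce to computing matrix elements of $:e_{ji}(k)e_{ij}(-k):$ in the fixed-point basis $[J_{\overrightarrow{\rho}}]$ of $\Hb_m(\An)$; the operator $e_{ij}(-k)$ transfers boxes between the $i$-th and $j$-th colours, which both shifts $|\rho^i|,|\rho^j|$ (giving the support restriction mod $(t_1+t_2)^2$) and produces the stated leading coefficients, while the $\log\!\big(1-(-q)^{\pm(m-1)}s_{ij}\big)$ comes from resumming the geometric series in $k$ built into the definition of $\Omega_{+}$.

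For the uniqueness half I would reproduce the reconstruction algorithm. Grade by $m$, so each weight space is a finite-dimensional matrix; by Prop.\ \ref{factorization} it suffices to determine the pure-divisor block $\langle\prod\lambda_i(\omega_i)|\,\cdot\,|\prod\rho_i(\omega_i)\rangle$, and by Prop.\ \ref{dimension} each entry is $(t_1+t_2)$ times a rational number, so it is enough to fix the leading $(t_1+t_2)$-coefficient, which one may read off in the fixed-point basis modulo $(t_1+t_2)$. Then induct on the discrete data, i.e.\ on the monomials $q^a s^\beta$: Prop.\ \ref{vanishing} forces the surviving entries in sector $[i,j]$ to be supported on pairs with $|\lambda_i|\ne|\eta_i|$ and $|\lambda_j|\ne|\eta_j|$, and commuting $\Theta^{\DT}$ with the Heisenberg operators $\mathfrak{p}_k(\omega_l)$ — those touching colours outside, and those touching colours strictly between $i$ and $j$ — together with the combinatorics of cohomology-weighted partitions, propagates any such entry from the explicit seed correlators of Prop.\ \ref{exactevaluation} (the $(m)\leftrightarrow(m-1)(1)$ and $(1^m)\leftrightarrow(1^{m-1})(1)$ entries), with the vacuum datum resolving the remaining additive constant and the zero-location in Prop.\ \ref{exactevaluation} fixing the $q$-dependence. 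Running this for all $(\beta,\chi)$ determines the operator, and since $(t_1+t_2)\Omega_{+}$ meets all the requirements it must be that operator.

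The hard part will be this last reconstruction step: the structural properties are easily seen to be \emph{necessary}, but showing them \emph{sufficient} — that a handful of explicit correlators plus support, rationality and factorization constraints genuinely determine every matrix element — demands a careful inductive bookkeeping of how commutators with the Nakajima operators act on the fixed-point basis, which is precisely the technical content of sections 4 and 5 of \cite{hilban}. A secondary subtlety, which one must track so that the "leading $(t_1+t_2)$-coefficient" reductions compose correctly, is the behaviour modulo powers of $t_1+t_2$ of the fixed-point basis and of the various change-of-basis and gluing matrices.
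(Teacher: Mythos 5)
Your outline matches the paper's treatment of this statement: Proposition \ref{reconstruction} is not proved in this paper but cited from sections 4 and 5 of \cite{hilban}, with exactly the sketch you give --- the properties of $(t_1+t_2)\Omega_{+}$ are verified by direct computation with the commutation relations of $\gotg$ (your key inputs, $[\mathfrak{p}_k(1),\Omega_{+}]=0$ since $\mathrm{tr}(\mathrm{Id}\cdot e_{ij})=0$ for $i\ne j$, and the fact that each normal-ordered summand annihilates the highest weight vector, are the right ones), and uniqueness is the inductive reconstruction carried out there. Two cautions, however. First, for the factorization property mere commutativity with $\mathfrak{p}_k(1)$ does not instantly give a tensor-factor splitting of matrix elements, because equivariantly $\langle 1,\omega_i\rangle\ne 0$, so the cross-contractions between $1$-labelled and $\omega$-labelled parts must be tracked; this is part of the ``direct operator calculation'' in \cite{hilban}. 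Second, and more seriously, in the uniqueness half your propagation mechanism --- ``commuting $\Theta^{\DT}$ with the Heisenberg operators $\mathfrak{p}_k(\omega_l)$'' --- is not available: uniqueness concerns an arbitrary graded operator satisfying Propositions \ref{factorization}--\ref{exactevaluation} and the vacuum condition, and these hypotheses constrain matrix elements only; no commutation relations with $\mathcal{H}$ are assumed. The argument of \cite{hilban}, as sketched in the paper, instead inducts on $m$ with the vacuum expectation as base case, uses Propositions \ref{factorization} and \ref{dimension} to reduce to divisor-labelled entries modulo $(t_1+t_2)^2$, and then a symmetric-function argument on those entries (passing to the fixed-point basis, which is defined over $\QQ[t_1,t_2]_{(t_1+t_2)}$) reduces everything to the two seed evaluations of Proposition \ref{exactevaluation} via the vanishing of Proposition \ref{vanishing}. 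Since you explicitly delegate this step to sections 4 and 5 of \cite{hilban}, your proposal is consistent with the paper's own proof, but the specific mechanism you name for the reconstruction would not run as written.
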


We defer the details of the proof to that paper, we sketch the basic idea here. 
The first part follows from a direct operator calculation using commutation relations of
$\gotg$.  For the uniqueness claim, the argument is to induct on $m$, using the
vacuum expectation as the base case.
 From Propositions \ref{factorization} and \ref{dimension}, it suffices to handle only partitions with divisor labels and, in
this case, to calculate the invariants $\mod (t_1+t_2)^{2}$.  Finally, a symmetric function argument
shows how to reduce these invariants to the two evaluations of Proposition \ref{exactevaluation}
using the vanishing of Proposition \ref{vanishing}.

Assuming the propositions listed above, we explain the proof of Theorem \ref{dt theorem}.
\begin{proof}
If we consider the operator 
$$\Theta^{\DT}(q,s_{1},\dots,s_{n}) - \langle \emptyset|\Theta^{\DT}|\emptyset\rangle\cdot \mathrm{Id},$$
it is easy to see that it still satisfies 
Propositions \ref{factorization}, \ref{dimension},\ref{vanishing} and \ref{exactevaluation}.
Moreover, by construction, its vacuum expectation vanishes.  By Proposition \ref{reconstruction},
we have
$$\Theta^{\DT} = \Omega_{+} +\langle \emptyset|\Theta^{\DT}|\emptyset\rangle\cdot \mathrm{Id}.$$
The calculation of Proposition \ref{vacuum} finishes the result.
\end{proof}

It remains to prove the propositions of section \ref{list of props}.

\section{Factorization}

\subsection{Toric compactification}

It will be useful in this section to work with toric compactifications of $\An$ for which 
effective linear combinations of the $[E_i]$ are rigid curves, linearly independent from the curve classes of the boundary.  While the choice of model is irrelevant for us, we give a specific construction to show they exist. 

We inductively construct projective toric surfaces $S_n$ which contain a $T$-equivariant embedding of $\An$ followed by a $-1$ curve.  For $n=1$, this follows by taking the Hirzebruch surface $F_2$
and blowing up one of the two torus fixed points not on the $-2$-section.  Given $S_n$, we construct $S_{n+1}$ by blowing up the torus fixed point of the $-1$ curve that does not lie on $\An$.  The condition
on curve classes $E_i$ is obvious in this case.

We will study invariants on $\An$ by taking the associated invariants on a compactification $S$
and localizing with respect to $T$.  This will decompose the compactified invariant into residue contributions from $\An$ and contributions from the toric affine chart $\CC^{2}_{z}$ centered on the fixed points $z$ that lie on the boundary of $S$.  In order to distinguish the surfaces, we will label both DT brackets and Fock space brackets with 
the surface being considered - either $\An$, $S$, or $\CC^{2}_{z}$.  

\subsection{Proof of Proposition \ref{factorization}}

Since the statement of the proposition is not affected by taking derivatives, we use the rigidification argument to work with non-rubber relative invariants with a $\sigma_{0}(\delta_0)$ insertion.

We prove the claim by induction on 
$$l = \mathrm{min}(l(\mu), l(\nu)).$$
In the case of $l = 0$, we can assume $\mu = \emptyset$.  We study the associated DT invariant on the compactified threefold $S\times\mathbf{P}^1$ by localization to get
$$\langle \prod \lambda_{i}(\omega_{i})| \sigma_{0}(\delta_0) | \nu(1)\prod \rho_{i}(\omega_{i})\rangle^{\DT,S}_{\beta} = \langle \prod \lambda_{i}(\omega_{i})| \sigma_{0}(\delta_{0}) | \nu(1)\prod \rho_{i}(\omega_{i})\rangle^{\DT,\An}_{\beta} \cdot \prod_{z} \langle \emptyset | \emptyset \rangle^{\DT,\CC^{2}_{z}}_{0}.
$$
This is the only contribution to the localization term since the left insertion and the curve class $\beta$ are forced to lie along $\An$.
It follows from the degree $0$ calculation on $\CC^{2}$ that
$$\langle \emptyset| \emptyset\rangle^{\DT,\CC^{2}}_{0} = 1.$$
By compactness, the invariants of the left-hand side lie in $\QQ[t_1,t_2]$
and have cohomological degree $1 -l(\nu)$; however, we know that from the right-hand side that they are divisible by $(t_1+t_2)$ by Proposition \ref{divisibility}.  Therefore, we must have $\nu = \emptyset$ and the statement of the proposition is vacuous.

In the case where $l > 0$, the localization of the compactified invariant is more complicated.  While the curve class $\beta$ is forced to lie on $\An$, the relative insertions from $\mu$ and $\nu$ (and the degree in the $\mathbf{P}^1$ direction) can lie on the boundary of $S$.
In what follows, we sum over decompositions of $\mu$
$$\mu = \mu_{0} \cup \bigcup \mu_{z}$$
into partitions supported along $\An$ and the fixed points $z$ on the boundary of $S$.

The same localization argument gives the following relation between the invariants on $\An$ and those on $S$:
\begin{multline*}
\langle \mu(1)\prod \lambda_{i}(\omega_{i})| \sigma_{0}(\delta_{0}) | \nu(1)\prod \rho_{i}(\omega_{i})\rangle^{\DT,S}_{\beta}=
\langle \mu(1)\prod \lambda_{i}(\omega_{i})| \sigma_{0}(\delta_{0}) | \nu(1)\prod \rho_{i}(\omega_{i})\rangle^{\DT,\An}_{\beta} + \\
\sum_{\mu_{z},\nu_{z}} C_{\mu_{z},\nu_{z}}
 \langle \mu_{0}(1)\prod \lambda_{i}(\omega_{i})| \sigma_{0}(\delta_{0}) |
 \nu_{0}(1)\prod \rho_{i}(\omega_{i})\rangle^{\DT,\An}_{\beta} \cdot
 \prod_{z} \langle \mu_{z}(1)| \nu_{z}(1)\rangle^{\DT,\CC^{2}_{z}}
\end{multline*}
where the coefficients $C_{\mu_{z},\nu_{z}}$ arise from ordering the parts of $\mu$ and $\nu$.  
%put the formula for C here

First, it follows from Lemma $4$ in \cite{okpandt} that the invariants on $\CC^{2}$ are given
by the Poincare pairing on $\Hb(\CC^{2})$, which is the same as the Fock space pairing.
$$
\langle \mu| \nu\rangle^{\DT,\CC^{2}} = q^{|\mu|} \langle \mu| \nu \rangle^{\CC^{2}}
$$
Also,the $\DT$-invariant on $S$ vanishes for dimension reasons, since $l > 0$.  Except for the leading term, we can apply the inductive hypothesis to every term on the right-hand side.  The result is
\begin{multline*}
0 =  \langle \mu(1)\prod \lambda_{i}(\omega_{i})| \sigma_{0}(\delta_{k}) | \nu(1)\prod \rho_{i}(\omega_{i})\rangle^{\DT,\An}_{\beta} + 
\langle\prod \lambda_{i}(\omega_{i})| \sigma_{0}(\delta_{k}) | \prod \rho_{i}(\omega_{i})\rangle^{\DT,\An}_{\beta}\cdot\\
q^{m}\left(
\sum_{\mu_{z},\nu_{z}} C_{\mu_{z},\nu_{z}} \langle \mu_{0}(1)| \nu_{0}(1)\rangle^{\An}\cdot
  \prod_{z} \langle \mu_{z}(1)| \nu_{z}(1)\rangle^{\CC^{2}_{z}}\right).
\end{multline*}

To finish the argument, we consider the following identity obtained by calculating the Poincare pairing on $\Hb(S)$ via localization:
$$0 = \langle \mu(1)| \nu(1)\rangle^{S} = 
\langle \mu(1)| \nu(1)\rangle^{\An} + 
\sum_{\mu_{z}, \nu_{z}} C_{\mu_{z},\nu_{z}}\langle \mu_{0}(1)| \nu_{0}(1)\rangle^{\An}
\cdot \prod_{z} \langle \mu_{z}(1)|\nu_{z}(1)\rangle^{\CC^{2}_{z}}.$$
Again, the vanishing follows for dimension reasons.  Combining the two equations gives the
factorization.

\subsection{Dimension statement}

For the dimension statement in Proposition \ref{dimension}, we again use the compactification argument.
Indeed, the same calculation from before now shows that
$$\langle \prod \lambda_{i}(\omega_{i})|\sigma_{0}(\delta_{0})|\prod \rho_{i}(\omega_{i})\rangle^{\DT, S}_{\beta}
= \langle \prod \lambda_{i}(\omega_{i})| \sigma_{0}(\delta_{0}) | \prod \rho_{i}(\omega_{i})\rangle^{\DT,
\An}_{\beta}$$
since all insertions are forced to lie along $\An$.  By the dimension formula and compactness, 
the coefficients 
of the left-hand side are linear polynomials in $\QQ[t_1,t_2]$.
Therefore, using the $t_1+t_2$-divisibility of the right-hand side, the coefficients of the overall expression are of the form
$\gamma\cdot(t_1+t_2), \gamma \in \QQ$.

\section{Vanishing and computations}\label{proof of vanishing}

In this section, we use the details of rubber localization discussed in section \ref{geometry} 
for calculations $\mod(t_1+t_2)^{2}$.  In each case, the key idea is to compare
the invariants with results from the geometry of $\Hb_{m}(\An)$.
\subsection{Proof of Proposition \ref{vanishing}}

Recall that we restrict to a curve class $\beta$ with support $[i,j]$ 
and multipartitions $\overrightarrow{\lambda}\neq \overrightarrow{\eta}$ such that
either $|\lambda_{i}| = |\eta_{i}|$ or
$|\lambda_{j}| = |\eta_{j}|$, and we want to prove
$$
\langle  [J_{\overrightarrow{\lambda}}]| [J_{\overrightarrow{\eta}}]\rangle^{\DT,\sim}_{\beta} = 0 \mod(t_1+t_2)^{2}.$$

It follows from Lemma \ref{skewtwist} that we can ignore contributions from 
graphs $\Gamma \in G_{\chi,\beta}(\overrightarrow{\lambda},\overrightarrow{\eta})$
which have skewer components or multiple edges.  Therefore, it suffices to only consider graphs $\Gamma$ which correspond to $T$-fixed stable maps to $\Hb_{m}(\An)$ for which
the domain is a chain of rational curves and the tangent weights at each node sum to $0 \mod t_1+t_2$.  These are known as \textit{unbroken} maps (see section~4 of \cite{hilban}).  We show the following in Proposition~4.9 of that paper.

\begin{lemma}
There are no unbroken maps connecting $[J_{\overrightarrow{\lambda}}]$ to $[J_{\overrightarrow{\eta}}]$
with curve class with support $[i,j]$.
\end{lemma}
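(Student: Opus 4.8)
The plan is to analyze $T$-fixed stable maps $f:\,C\to \Hb_m(\An)$ whose domain $C$ is a chain of rational curves and whose image connects the fixed points $[J_{\overrightarrow{\lambda}}]$ and $[J_{\overrightarrow{\eta}}]$, subject to the unbrokenness condition that at every node of $C$ the two incident tangent weights sum to zero modulo $t_1+t_2$. First I would recall the structure of $T$-fixed rational curves in $\Hb_m(\An)$: each non-constant component of $f$ maps isomorphically onto a $T$-invariant $\mathbf{P}^1$ in the Hilbert scheme, joining two fixed points, and the corresponding multipartitions differ in a controlled way — one box moves along a single exceptional curve $E_k$, or along a noncompact direction $E_0$ or $E_{n+1}$. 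The tangent weights $w_{P,0}, w_{P,\infty}$ at the two ends of such a component can be written explicitly in terms of the weights $w^L_i, w^R_i$ at the fixed points of $\An$ and the divisor degrees $\mathrm{deg}(f^*D)$, $\mathrm{deg}(f^*(1,\omega_k))$, exactly as recorded in section~\ref{rubber geometry}.

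The key observation I would exploit is the interaction between the unbrokenness condition and the curve class constraint. Because the total curve class has $\An$-support exactly $[i,j]$ (so $b_i, b_{j-1}\ne 0$), the chain $C$ must contain components realizing box-motions across $E_i$ and across $E_{j-1}$; in particular some box must move off the chart at $p_i$ toward $p_{i+1}$ and, separately, some box must move toward $p_j$. I would then trace through the chain from one marked point to the other, using the unbrokenness relation $w_{P,s}+w_{P',s}\equiv 0 \pmod{t_1+t_2}$ at each interior node to propagate a linear constraint on the $T$-weights. The point is that each individual twistor weight, reduced modulo $t_1+t_2$, becomes an integer multiple of a single variable (say $t_1 \equiv -t_2$), and the unbrokenness condition forces these reduced weights to cancel in pairs along the chain. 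Combined with the hypothesis $|\lambda_i|=|\eta_i|$ or $|\lambda_j|=|\eta_j|$ — which says the total number of boxes in one of the two relevant charts is unchanged — this cancellation is incompatible with having nonzero net box-flow across $E_i$ (respectively $E_{j-1}$): the reduced weights at the two ends of the chain cannot simultaneously match the fixed points $[J_{\overrightarrow{\lambda}}]$ and $[J_{\overrightarrow{\eta}}]$.

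The main obstacle, as I see it, is the bookkeeping of the fractional tangent weights for chains in which several boxes move simultaneously or in which the endpoint multipartitions have many nonempty components — the explicit formula for $w_{P,s}$ in terms of $\overrightarrow{\lambda}, \overrightarrow{\rho}$ and the degrees is not entirely transparent, and one must be careful that a component can contribute curve class to $E_i$ without its weights being "visible" at the node. I would handle this by reducing to the essential case: since the Hilbert scheme of $\An$ decomposes (under $T$-fixed analysis) into a product over the affine charts, and box-motion across $E_i$ only involves the charts at $p_i$ and $p_{i+1}$, I would localize attention to the "bridge" edge of the chain responsible for the $E_i$-degree, show its reduced tangent weight is a nonzero multiple of $t_1$, and derive the contradiction with the equality $|\lambda_i|=|\eta_i|$ by a counting argument on boxes in the $p_i$-chart along the chain. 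Since this lemma is quoted as Proposition~4.9 of \cite{hilban}, I would organize the argument to match that reference and defer the most tedious weight computations to it.
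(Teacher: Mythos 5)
The first thing to note is that the paper itself contains no internal proof of this lemma: it is imported verbatim as Proposition~4.9 of \cite{hilban}, and the surrounding text of the proof of Proposition \ref{vanishing} consists only of the reduction (via Lemma \ref{skewtwist}) to unbroken maps followed by that citation. So your closing decision to organize the argument to match that reference and defer the weight computations to it is, in effect, exactly what the paper does, and to that extent your strategy is consistent with the text.

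However, insofar as you do sketch the internal argument, the crucial step has a genuine gap. You deduce a contradiction from the hypothesis $|\lambda_i|=|\eta_i|$ (or $|\lambda_j|=|\eta_j|$) by saying the class with support $[i,j]$ forces a ``nonzero net box-flow across $E_i$''. That inference is false as stated: a nonzero $E_i$-degree does not imply a net change in the number of points in the chart at $p_i$, since points can cross $E_i$ in both directions along the chain, each crossing contributing positively to the degree while leaving $|\lambda_i|$ unchanged (and $E_{i-1}$, $E_j$ having zero degree only tells you that chart $i$ communicates with the support solely through $E_i$). The entire content of Proposition~4.9 of \cite{hilban} is to rule out precisely such compensating motions, and this requires combining the classification of $T$-invariant curves in $\Hb_m(\An)$ with the observation that the unbroken condition transports a single reduced weight (a fixed, possibly fractional, multiple of $(n+1)t_1$ once one sets $t_2\equiv -t_1$) along the entire chain, which is what forces the crossings of $E_i$ to be monotone; your sketch asserts the incompatibility but never carries out this step. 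Two smaller inaccuracies feed into the same gap: components of a $T$-fixed stable map need not map isomorphically onto invariant curves (multiple covers occur and are the source of the fractional weights the paper refers to), and the $T$-invariant curves are not only ``one box moving along $E_k$'' --- there are within-chart curves with zero $E$-degree and nonzero $D$-degree, as well as more general invariant families, which can occur as components of the chain and must be accommodated by any counting argument on the $p_i$-chart.
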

This immediately gives the desired vanishing.

\subsection{Proof of Proposition \ref{exactevaluation}}\label{proof of vanishing for not root}

We just handle the evaluation
$$\langle [J_{\overrightarrow{\rho}}]|\Theta^{\DT}_{[i,j]}|[J_{\overrightarrow{\kappa}}]\rangle 
\mod (t_1+t_2)^{2};$$
 the proof of the other invariant is identical.
We follow the same argument as in the last section.  Again, we use lemma \ref{skewtwist}
to only consider graphs $\Gamma$ corresponding to unbroken $T$-fixed maps
to $\Hb_m(\An)$ with support $[i,j]$. The following lemma is equivalent to the Lemma~4.11 from \cite{hilban}.
\begin{lemma}
Given an unbroken map connecting fixed points 
$J_{\overrightarrow{\rho}}$ and $J_{\overrightarrow{\kappa}}$ with curve class $\gamma$, there exists $d>0$
such that $$(1,\omega_{l})\cdot \gamma = d \mbox{ for } i\leq l \leq j-1$$
and $$(1,\omega_{l})\cdot\gamma = 0$$ otherwise.  In this case, the
unbroken map is unique.
\end{lemma}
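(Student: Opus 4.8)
The plan is to reduce the claim to a combinatorial analysis of $T$-fixed stable maps to $\Hb_m(\An)$, exactly as in the corresponding lemma (4.11) of \cite{hilban}, since the hypothesis asserts the two statements are equivalent. First I would recall the explicit description of the $T$-fixed points $J_{\overrightarrow{\rho}}$ and $J_{\overrightarrow{\kappa}}$: $\rho$ is concentrated at the fixed point $p_i$ with the single-row partition $(m)$, while $\kappa$ is concentrated with $(m-1)$ at $p_i$ and $(1)$ at $p_j$. An unbroken $T$-fixed map of a chain of rational curves connecting these two fixed points decomposes into a sequence of $T$-fixed $\mathbf{P}^1$'s in $\Hb_m(\An)$, each joining two $T$-fixed points; the chain must carry the "mass" from the configuration at $p_i$ across to produce the single box at $p_j$. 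I would use the Nakajima-theoretic / moment-graph description of $T$-invariant curves in $\Hb_m(\An)$ (as developed in \cite{hilban}, sections 3--4) to enumerate the edges of such a chain and track how each edge changes the multidegrees $(1,\omega_l)\cdot\beta$.

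The key structural fact to extract is that each elementary $T$-fixed curve in $\Hb_m(\An)$ that moves a box "past" the exceptional curve $E_l$ contributes $+1$ to the pairing with $(1,\omega_l)$ and nothing to the pairings with other $\omega$'s — this is the content of the divisor-degree formulas $D,(1,\omega_k)$ recalled in the excerpt. Since the difference between the configuration of $\overrightarrow{\rho}$ (all mass at $p_i$) and that of $\overrightarrow{\kappa}$ (one box transported to $p_j$) requires moving exactly one box from $p_i$ to $p_j$, and since $p_i$ and $p_j$ are joined in the toric diagram by the consecutive chain $E_i,\dots,E_{j-1}$, that single box must cross each of $E_i,\dots,E_{j-1}$ exactly the same number of times, say $d$; and it must not cross any other $E_l$, since doing so would have to be undone, contradicting unbrokenness (or forcing a broken/skewer component, which Lemma~\ref{skewtwist} lets us discard). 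This gives $(1,\omega_l)\cdot\gamma = d$ for $i\le l\le j-1$ and $0$ otherwise. The positivity $d>0$ follows because $\overrightarrow{\rho}\ne\overrightarrow{\kappa}$ forces at least one crossing, i.e. the curve class is nonzero on the relevant interval.

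For the uniqueness of the unbroken map with this curve class, I would argue as in \cite{hilban}: once the sequence of multidegrees along the interval $[i,j]$ is fixed to be the "staircase" $d,d,\dots,d$, the unbroken condition (all node tangent weights summing to $0$ mod $t_1+t_2$) rigidifies the chain — at each stage there is exactly one $T$-fixed outgoing edge whose balancing weight matches, so the chain of rational curves is determined, and hence so is the stable map. The main obstacle, I expect, is the bookkeeping needed to make precise the claim that an unbroken chain cannot "backtrack" across an exceptional curve and cannot stray outside the interval $[i,j]$; this is really a statement about the structure of $T$-invariant rational curves on $\Hb_m(\An)$ and the restrictions imposed by unbrokenness, and it is exactly the place where one must invoke the detailed analysis of \cite{hilban} rather than reprove it here. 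Once that combinatorial rigidity is in hand, the degree computation and uniqueness are immediate.
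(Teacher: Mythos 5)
Your proposal follows essentially the same route as the paper: the paper gives no independent argument here, simply observing that the lemma is equivalent to Lemma~4.11 of \cite{hilban}, and your reduction to the combinatorial analysis of unbroken $T$-fixed chains in $\Hb_m(\An)$, with the no-backtracking and rigidity statements invoked from that paper, is exactly that. Your heuristic that the transported box crosses each $E_l$ the same number $d$ of times is only a gloss (for $d>1$ the unbroken contributions involve multiple covers, and the equality of the multidegrees across $[i,j]$ is precisely what Lemma~4.11 of \cite{hilban} establishes), but since you defer that point to \cite{hilban}, as does the paper, this is not a gap.
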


In our situation, this implies that only $\beta = d\alpha_{i,j}$
contributes to the invariant $\mod (t_1+t_2)^{2}$ and there is a unique 
graph $\Gamma$ that contributes as well.
It then follows from Lemma \ref{hilbdtcompare} that the contribution of $\Gamma$
is the same as the contribution of the associated unbroken map to the two-point invariant
on $\Hb_{m}(\An)$.  From the calculation of Proposition~4.3 in \cite{hilban}, we have
$$\langle [J_{\overrightarrow{\rho}}]| [J_{\overrightarrow{\kappa}}]\rangle^{\DT,\sim}_{\beta} =  
(-1)^{m-1}(t_1+t_2)((n+1)t_1)^{2m}\frac{1}{d}\frac{(m!)^{2}}{m}
(-1)^{md+m+d}q^{md+m-d}\mod (t_1+t_2)^{2}.$$
The shift in the $q$-variable exactly corresponds to the discrepancy between curve degree on $\Hb(\An)$ and the Euler characteristic that appears in equation
(\ref{twistordegrees}).

\section{Vacuum expectation}

The vacuum expectation is the only calculation in this paper which cannot be reduced formally to a calculation on the Hilbert scheme of points on $\An$.  

After rigidification, it suffices to prove the following lemma
 \begin{lemma}
 $$\langle \emptyset | \sigma_{0}(\delta_0)|\emptyset\rangle^{\DT,\prime}_{\beta} = 
 -(t_1+t_2)(j-i)\frac{(-q)^{d}}{(1-(-q)^{d})^{2}}
 $$
 if $\beta = d\alpha_{i,j}$ where $\alpha_{i,j}$ are the root curve classes of
 section \ref{proof of vanishing for not root}.  Otherwise the invariant vanishes.
 \end{lemma}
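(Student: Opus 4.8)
The plan is to prove the lemma by equivariant localization on the rubber moduli space, reducing the integral to one explicit skewer contribution. First note that with $k=2$ relative fibers the normalization factor is $\mathsf{Z}_{\DT}(X)_{(0,0),\emptyset,\emptyset}=M(-q)^{0}=1$, so the primed invariant equals the unnormalized one. By the rigidification lemma of Section~\ref{geometry}, $\langle\emptyset|\sigma_{0}(\delta_{0})|\emptyset\rangle^{\DT}_{\beta}=\big((\omega_{1}+\dots+\omega_{n})\cdot\beta\big)\,\langle\emptyset,\emptyset\rangle^{\DT,\sim}_{\beta}$, and for $\beta=d\,\alpha_{i,j}$ this intersection number is $d(j-i)$. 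Hence it suffices to show
$$\langle\emptyset,\emptyset\rangle^{\DT,\sim}_{\beta}=-\frac{t_{1}+t_{2}}{d}\,\frac{(-q)^{d}}{(1-(-q)^{d})^{2}}$$
when $\beta=d\,\alpha_{i,j}$, and that $\langle\emptyset,\emptyset\rangle^{\DT,\sim}_{\beta}=0$ for every other effective $\beta$; multiplying by $d(j-i)$ then gives the statement.

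To evaluate $\langle\emptyset,\emptyset\rangle^{\DT,\sim}_{\beta}$ I would localize with respect to $T\times\CC^{\ast}$, exactly as in Section~\ref{geometry}. The crucial simplification is that $m=0$: the relevant Hilbert scheme is $\Hb_{0}(\An)=\mathrm{pt}$, so there are no twistor maps of positive degree and every $T$-fixed component of the rubber moduli space is built purely from skewers. Feeding in the induction on discrete invariants (Proposition~\ref{divisibility} in lower degree) together with the skewer/twistor divisibility of Lemma~\ref{skewtwist}, all fixed loci whose graph contains more than one skewer vertex --- or a skewer with strictly smaller discrete invariants --- contribute terms of $(t_{1}+t_{2})$-valuation too high to matter at the order we need. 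What survives is the contribution of a single skewer on one bubble of the rubber target which, being $T$-fixed with $m=0$, is supported on a contiguous chain of exceptional curves $E_{i}\cup\dots\cup E_{j-1}$ inside a fiber $\An$, thickened with a constant transverse multiplicity. Such a configuration exists precisely when $\beta=d\,\alpha_{i,j}$ --- an unbalanced effective class forces the $T$-fixed curve to split into pieces, each of which (by induction) contributes with higher $(t_{1}+t_{2})$-valuation --- which yields the asserted vanishing.

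For $\beta=d\,\alpha_{i,j}$ the remaining configuration is the $d$-fold thickening of the chain $E_{i}+\dots+E_{j-1}$, and its Donaldson--Thomas contribution is a single descendent skewer integral. I would evaluate it by identifying the $T$-equivariant normal geometry of $E_{k}\times\{z\}\subset\An\times\mathbf{P}^{1}$ with the local curve $\mathrm{Tot}\big(\mathcal{O}(-2)\oplus\mathcal{O}\big)\to\mathbf{P}^{1}$ and using the rubber Donaldson--Thomas evaluation of this local geometry, which follows from the local curve computations of \cite{localcurves, okpandt} (equivalently, from a direct $T\times\CC^{\ast}$-localization). The sum over Euler characteristics $\chi$ --- equivalently over the length of the punctual part of the thickening --- collapses to the geometric-derivative series
$$\sum_{l\ge 1} l\,(-q)^{dl}=\frac{(-q)^{d}}{(1-(-q)^{d})^{2}},$$
the factor $1/d$ is the automorphism weight of the rubber $\CC^{\ast}$, the sign comes from the Euler-characteristic conventions, and the equivariant prefactor collapses to a single power of $t_{1}+t_{2}$ by the $(t_{1}+t_{2})$-divisibility of Proposition~\ref{divisibility} together with the dimension constraint. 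Multiplying by $d(j-i)$ from the rigidification step then yields the claimed value. I expect the main obstacle to be precisely this last step: singling out the one relevant fixed locus, ruling out every configuration unless $\beta$ is a multiple of a single positive root, and evaluating the surviving skewer integral exactly --- in particular verifying that no residual dependence on $t_{1}$ or $t_{2}$ survives and that the geometric series and the overall sign come out as stated.
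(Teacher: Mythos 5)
Your reduction step is fine and even consistent with the paper's formulas: with $k=2$ fibers the degree-zero normalization is trivial, and rigidification converts the lemma into the statement $\langle\emptyset,\emptyset\rangle^{\DT,\sim}_{d\alpha_{i,j}}=-\frac{t_1+t_2}{d}\frac{(-q)^d}{(1-(-q)^d)^2}$, which matches the vacuum expectation of $\Theta^{\DT}$. But note this goes in the opposite direction from the paper: the paper starts from the rubber statement, rigidifies to the two-fiber relative invariant, then uses the degeneration formula together with the cap evaluation (and the fact that the degree-zero series is $1$ mod $t_1+t_2$) to replace it by the \emph{absolute} invariant $\langle\sigma_0(\delta_0)\rangle^{\DT}_{(\beta,m=0)}$ mod $(t_1+t_2)^2$, precisely because on the absolute space the full $T\times\CC^{\ast}$ acts and the equivariant vertex analysis of section \ref{equivariant vertex} applies cleanly. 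Your plan stays inside the rubber theory, where only $T$ acts and a single-skewer contribution carries rubber cotangent-line/automorphism factors that you do not work out; the ``$1/d$ from the rubber $\CC^{\ast}$'' and the sign are exactly the quantities that must be derived rather than named.

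The genuine gap is the part you yourself flag as ``the main obstacle,'' and it is the actual content of the proof. First, the vanishing for $\beta$ not a multiple of a root does not follow from your splitting argument: a single connected $T$-fixed skewer can carry a class $\sum c_kE_k$ with non-constant $c_k$ (non-uniform thickening along the chain), so nothing forces the curve to break into pieces. The paper rules these out by the rank formula for the vertex weight, which shows $\mathrm{mult}_{t_1+t_2}(\mathsf{w}(\pi))\geq 2$ unless the configuration has rank exactly $1/2$ at the two end vertices and $0$ in between. Second, the surviving minimal loci are not just the $d$-fold thickening of $E_i\cup\dots\cup E_{j-1}$: they form the two-parameter family $H^d_{a,b}$ with $a$ and $b$ extra boxes at the two end vertices, and it is the sum over $(a,b)$ (giving $l=1+a+b$ with multiplicity $l$) that produces $\sum_{l\geq1}l(-q)^{dl}=\frac{(-q)^d}{(1-(-q)^d)^2}$; restricting to the pure thickening would only give the leading term. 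Third, the weight of each $H^d_{a,b}$ still has to be computed; invoking the $\mathcal{O}(-2)\oplus\mathcal{O}$ local-curve rubber theory does not supply this, since the configuration here is a chain of $(-2)$-curves thickened in the transverse $\mathbf{P}^1$ direction with punctual decorations at the ends, and one must also justify discarding all other cross-section partitions of $d$. The paper does this by writing the explicit vertex and edge products for $\mathsf{w}(H^d_{a,b})$, extracting the single factor of $t_1+t_2$, and evaluating the remainder at $t_3=0$, with the descendent $\sigma_0(\delta_0)$ contributing the factor $dt_3(j-i)$; without some equivalent of this computation your argument establishes neither the sign, nor the $q$-series, nor the absence of residual $t_1,t_2$ dependence.
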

 
 \subsection{Minimal configurations}
 
 From Proposition \ref{dimension}, these invariants are proportional to
 $(t_1+t_2)$, so it suffices to calculate them $\mod (t_1+t_2)^{2}$.  Second,
 it follows from the degeneration formula
 and the calculation of the invariants 
 $\langle |\emptyset\rangle$ - whose proof we defer until section \ref{cap and tube} -
 that we can replace the relative invariants with
 $$\langle \sigma_{0}(\delta_0)\rangle^{\DT,\prime}_{(\beta,m=0)}\equiv \langle
 \sigma_0(\delta_0) \rangle^{\DT}_{(\beta,m=0)} \mod (t_1+t_2)^{2}.$$
 This last congruence follows from the fact that the degree $0$ series for
 $\An$ is congruent to $1$ modulo  $(t_1+t_2)$.
 
 This last expression will be handled by localizing with respect to $T \times
 \CC^{\ast}$; we assume
 that $\delta_0$ is supported over $0 \in \mathbf{P}^1$.  Since we can ignore
 factors of $(t_1+t_2)^2$,
 we only consider $\An$-box configurations $\pi$ with
 $\mathrm{mult}_{t_1+t_2}(\pi) = 1$.  

We can enumerate such configurations as follows.  First, assume 
$$\beta = c_{i}[E_i] + c_{i+1}[E_{i+1}]+\dots +c_{j-1}[E_{j-1}]$$
with $c_{i}, c_{j-1} \ne 0$.  Let $\pi_{r}$ denote the $3$d partition concentrated at $p_{r}\times 0$.
So $\pi_{i}, \pi_{j}$ have exactly one infinite leg and $\pi_{k}$ have two infinite legs for $i < k< j$.  

Since we want $\pi$ to have minimal rank, these are the only fixed points with nonempty $3$d partitions.
The lower bounds from section \ref{equivariant vertex} imply that we must have the equalities
$$\mathrm{rk}_{t_3}\pi_{i} = \mathrm{rk}\pi_{j} = \frac{1}{2},\quad \mathrm{rk}_{t_3}(\pi_{k}) = 0,\quad i< k< j.$$

The only $\An$-box configurations $\pi$ with this property can be characterized as follows.  
For $a,b\geq 0$, let $H_{a,b}$ be the $2$d diagram on the $\An$-skeleton such that
\begin{enumerate}
\item at $p_{i}, H_{a,b}$ has a single infinite leg of width $1$ along $w_{R}^{i}$ and $a$ additional boxes in a single column along $w_{L}^{i}$
\item at $p_{j}, H_{a,b}$ has a single infinite leg of width $1$ along $w_{L}^{j}$ and $b$ additional boxes in a single row along $w_{R}^{j}$,
\item at $p_k$ for $i < k < j$, $H_{a,b}$ has infinite legs of width $1$ in both directions and no additional boxes.
\end{enumerate}
Given a minimal box configuration $\pi$, there exists $d \geq 1, a, b \geq 0$, such that the
slices of $\pi$ along $t_{3}$ are
$$\pi^{0} = \pi^{1} = \dots = \pi^{d-1} = H_{a,b}.$$
These configurations will be denoted $H_{a,b}^{d}$
In particular, this forces $\beta = d \alpha_{ij}$ which implies the vanishing statement.

\subsection{Contribution of $H_{a,b}^{d}$}

There are two factors associated to each fixed locus.  First, there is the insertion 
$\sigma_{0}(\delta_{0})$, which gives the factor
$$t_{3}(\sum\omega_k\cdot \beta) = dt_{3}(j-i).$$
Second, there is the localized virtual class $\mathsf{w}(H_{a,b}^{d})$.   Fortunately,
our $\An$-box configurations are extremely simple so the respective vertex
and edge contributions are easily calculated as follows.  We group together all factors of $(t_1+t_2)$
and write the remaining factors $\mod(t_{1}+t_{2})$.  
\begin{multline*}
\mathsf{w}(\pi_{k})=(-1)^d
\frac{dt_3}{(t_1+t_2)}\prod_{r=0}^{d-1}
\frac{(w^R_m+rt_3)^2}{(w^R_m-(r+1)t_3)^2}\\
\frac{(w^L_m+rt_3)^2}{(w^L_m-(r+1)t_3)^2}
\frac{(2w^R_m-(r+1)t_3)(2w^L_m-(r+1)t_3)}{(2w^R_m+rt_3)(2w^L_m+rt_3)},\quad
i < k < j.
\end{multline*}
\begin{gather*}
\mathsf{w}(\mathrm{edge})=\frac{(t_1+t_2)}{-t_3}\prod_{r=1}^{d-1}
\frac{-rt_3}{-(r+1)t_3}\\
\mathsf{w}(\pi_{i})=\prod_{r=0}^{d-1}\prod_{s=1}^a
\frac{((r-d)t_3-s(n+1)t_1)}{(rt_3+s(n+1)t_1)},\quad
\mathsf{w}(\pi_{j})=\prod_{r=0}^{d-1}\prod_{s=1}^b
\frac{((r-d)t_3-s(n+1)t_2)}{(jt_3+s(n+1)t_2)}.
\end{gather*}
Since the overall product of these terms has no pole at $t_3$ and since our ultimate answer 
has no equivariant dependence on $t_3$, we can evaluate the answer by setting $t_3=0$.  This yields
$$\sum_{a,b\geq 0} (t_1+t_2)q^{d(1+a+b)}(-1)^{d(1+a+b)+1}$$
which yields the desired statement.

This completes the proof of Theorem \ref{dt theorem}.

\section{Proofs of Main Theorems}

\subsection{Cap and tube invariant}\label{cap and tube}

In this section, we first calculate the relative DT theory of $X$ relative to $1$ and $2$ fibers, referred to as cap and tube invariants, and prove the main theorems for $\overrightarrow{\rho} = (1)^{m}$.

\begin{prop}\label{cap evaluation}
Let $\overrightarrow{\mu} = \mu_{1}([p_{1}])\dots\mu_{n+1}([p_{n+1}])$
be a cohomology-weighted partition of $m$, with labels given by the fixed-point basis of $H_{T}^{*}(\An,\QQ)$.   We then have
$$\mathsf{Z}^{\prime}_{\DT}(X)_{\overrightarrow{\mu}} = q^{m}\prod_{i=1}^{n+1} 
\frac{\delta_{\mu_{i}, (1)^{m_{i}}}}{m_{i}!},$$
where $m_{i} = |\mu_{i}|$.
\end{prop}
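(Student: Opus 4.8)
The plan is to compute the cap invariant $\mathsf{Z}^{\prime}_{\DT}(X)_{\overrightarrow{\mu}}$ — that is, the relative DT theory of $X = \An\times\mathbf{P}^1$ with respect to a single fiber — by $T$-equivariant localization, exploiting that the only contributing curve class for a \emph{normalized} invariant is $\beta = 0$. Indeed, by Proposition~\ref{divisibility} any contribution with $\beta\ne 0$ has positive valuation in $t_1+t_2$; but the left-hand side of the claimed identity, being a cap invariant with a single relative condition, has cohomological degree $0$ after normalization, hence its coefficients lie in $\QQ$ and cannot be divisible by $t_1+t_2$ unless they vanish. So it suffices to treat $\beta = 0$, and the whole computation reduces to a question about ideal sheaves supported over a single point of $\mathbf{P}^1$ together with vertical thickening — essentially the degree-$0$, relative-over-one-point local model on each toric chart $\CC^2 \times \mathbf{P}^1$.

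First I would set up the localization with respect to the full torus $T\times\CC^{\ast}$ acting on $\An\times\mathbf{P}^1$, where $\CC^\ast$ acts on the base $\mathbf{P}^1$ fixing $0$ (the relative point) and $\infty$. The fixed loci are again enumerated by $\An$-box configurations as in Section~\ref{equivariant vertex}, now with the boundary condition along the fiber over one point recorded by the boundary map $\epsilon_1$ to $\Hb_m(\An)$. Since $\beta=0$, the fixed-point contributions decompose (after $s_i=0$) into a product over the $n+1$ torus-fixed points $p_1,\dots,p_{n+1}$ of $\An$ of local $\CC^2\times\mathbf{P}^1$ cap contributions, exactly as in the factorization used in the proof of the earlier lemma on $\mathsf{M}_0$ being a direct sum of tensor products. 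The local $\CC^2$ cap with relative condition $\mu_i([p_i])$ has already been evaluated in \cite{okpandt}: the key vanishing there says that the degree-$0$ relative DT cap invariant of $\CC^2\times\mathbf{P}^1$ relative to a single fiber, normalized by the degree-$0$ series, is $q^{m_i}\delta_{\mu_i,(1)^{m_i}}/m_i!$. I would cite that local evaluation and simply assemble the product over the $n+1$ charts.

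The remaining work is to make sure the assembly is correct: that the normalization by $\mathsf{Z}_{\DT}(X)_{(0,0),\emptyset}$ is compatible with the product over charts (it is, since the degree-$0$ series itself factors as a product of local MacMahon contributions), and that no $\beta\ne 0$ corrections survive after normalization — which is precisely the $(t_1+t_2)$-divisibility argument above combined with the dimension count showing the normalized cap invariant has degree $0$. I would also record the shift by $q^m$ coming from the discrepancy between Euler characteristic and the natural grading, which is already present in the $\CC^2$ statement and multiplies across the $n+1$ factors to give $q^{\sum m_i}=q^m$.

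The main obstacle is the dimension/degree bookkeeping needed to rule out the $\beta\ne 0$ terms cleanly: one must verify that the normalized single-fiber invariant $\mathsf{Z}^{\prime}_{\DT}(X)_{\overrightarrow{\mu}}$ indeed has vanishing equivariant dimension so that $(t_1+t_2)$-divisibility forces vanishing, rather than merely controlling the leading term. Once that is in place, everything else is a direct appeal to the $\CC^2$ cap evaluation of \cite{okpandt} and the multiplicativity of localization contributions over the toric charts of $\An$ at $s_1=\dots=s_n=0$, with the full statement following since the $\beta\ne 0$ contributions have already been shown to vanish.
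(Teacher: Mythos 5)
Your $\beta=0$ step is essentially the paper's: at $s_1=\dots=s_n=0$ the cap factors over the toric charts into $\CC^2_i\times\mathbf{P}^1$ caps with relative condition $\mu_i([p_i])$, and these are evaluated in \cite{okpandt}; assembling the product and the $q^{m}$ shift is routine.

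The gap is in your vanishing argument for $\beta\neq 0$. You assert that the normalized cap invariant has cohomological degree $0$, ``hence its coefficients lie in $\QQ$,'' and then conclude from $(t_1+t_2)$-divisibility (Proposition \ref{divisibility}) that they vanish. But $X=\An\times\mathbf{P}^1$ is noncompact and the invariants are defined by equivariant residues, so a priori they are only rational functions in $t_1,t_2$; a degree count does not place them in $\QQ[t_1,t_2]$, and for rational functions positive valuation along $t_1+t_2$ together with degree $\le 0$ proves nothing (for instance $(t_1+t_2)/t_1$ has degree $0$, positive valuation along $t_1+t_2$, and is nonzero). You flag exactly this as ``the main obstacle'' but do not supply the ingredient that closes it. The paper closes it with the toric compactification $\An\subset S$ from the factorization section: because the relative insertions are fixed-point classes supported away from the boundary of $S$ and $\beta$ is forced to lie along $\An$, localization gives $\mathsf{Z}^{\prime}_{\DT}(X)_{\overrightarrow{\mu}} = \mathsf{Z}^{\prime}_{\DT}(S\times\mathbf{P}^{1})_{\overrightarrow{\mu}}$, and compactness of the $S\times\mathbf{P}^1$ geometry forces the $\beta\neq0$ invariants to lie in $\QQ[t_1,t_2]$. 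Only then do $(t_1+t_2)$-divisibility and the degree count $-2m+(m-l)+2l=l-m\le 0$ give the vanishing --- note also that the degree is $l-m\le 0$, not $0$ as you state (it is $0$ only when every part of $\overrightarrow{\mu}$ equals $1$), although this weaker bound still suffices once polynomiality is in hand.
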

\begin{proof}
The $\beta=0$ contribution to the cap invariant
follows from the cap calculation for $\CC^{2}$ of \cite{okpandt}.  Indeed, since $\beta = 0$, the relative invariants factor over contributions from the toric affine charts $\CC^{2}_{i}$ centered at the fixed points
$p_{i}$:
$$
\mathsf{Z}^{\prime}_{\DT}(X)_{\beta=0,\overrightarrow{\mu}} = \prod_{i=1}^{n+1}
\mathsf{Z}^{\prime}_{\DT}(\CC^{2}_{i}\times \mathbf{P}^1)_{\mu_{i}([p_{i}])}.
$$
Comparison with Lemma $21$ from \cite{okpandt} finishes the claim.

For $\beta \ne 0$, we need to show the contribution vanishes.  Following the proof
of Proposition \ref{factorization}, we take the toric compactification $\An \subset S$. After
normalizing by the degree $0$ partition function, we have the 
equality
$$\mathsf{Z}^{\prime}_{\DT}(X)_{\overrightarrow{\mu}} = \mathsf{Z}^{\prime}_{\DT}(S\times\mathbf{P}^{1})_{\overrightarrow{\mu}}$$
since all relative conditions are fixed off the boundary  this implies via compactness that the invariants with $\beta \ne 0$ lie in $\QQ[t_1,t_2]$ and, by Proposition \ref{divisibility} are divisible by $t_1+t_2.$  However
the invariants have degree
$$-2m + (m-l) + 2l = l-m \leq 0,$$ 
which gives the vanishing.
\end{proof}

\begin{prop}\label{tube}
For tube invariants, we have
$$\mathsf{Z}^{\prime}_{\DT}(X)_{\overrightarrow{\mu},\overrightarrow{\nu}}=\langle \overrightarrow{\mu} , \overrightarrow{\nu}\rangle^{\DT} = q^{m}\langle 
\overrightarrow{\mu}| \overrightarrow{\nu}\rangle$$
\end{prop}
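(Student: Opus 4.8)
The goal is to show that the tube partition function (relative DT theory of $X = \An\times\mathbf{P}^1$ relative to two fibers, with no curve class restriction a priori, but only the $\beta=0$ part surviving) equals the classical Fock-space pairing $q^{m}\langle\overrightarrow{\mu}|\overrightarrow{\nu}\rangle$. The plan is to separate the computation into the $\beta = 0$ contribution and the $\beta\neq 0$ contribution, and show the latter vanishes while the former reproduces the Poincar\'e pairing. For the $\beta\neq 0$ part, I would follow exactly the strategy already used in Proposition~\ref{cap evaluation}: pass to a toric compactification $\An\subset S$, observe that since the relative insertions $\overrightarrow{\mu},\overrightarrow{\nu}$ are supported at the $T$-fixed points of $\An$ (away from the boundary of $S$) and the curve class $\beta$ is forced to lie on $\An$, the normalized invariant on $X$ agrees with the normalized invariant on $S\times\mathbf{P}^1$. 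Compactness then forces the $\beta\neq 0$ coefficients to lie in $\QQ[t_1,t_2]$, while Proposition~\ref{divisibility} forces divisibility by $t_1+t_2$; a dimension count (the virtual dimension of the relative moduli space with two full relative conditions of colength-zero type) shows the relevant cohomological degree is nonpositive, so these polynomial, $(t_1+t_2)$-divisible classes must vanish.

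For the $\beta=0$ contribution, the degeneration/localization argument reduces the tube over $\An$ to a product of tube invariants over the toric affine charts $\CC^2_i$ centered at the fixed points $p_i$, just as in the cap case; by the $\CC^2$ tube computation of \cite{okpandt} (the relevant statement there being that the degree-zero, two-relative-fiber invariant of $\CC^2\times\mathbf{P}^1$ computes the Poincar\'e pairing on $\Hb(\CC^2)$), each factor gives the Fock-space pairing for the component partitions, with the overall normalization producing the factor $q^{m}$. Assembling the factors using the tensor-product structure of $\mathcal{F}_{\An}$ under the chart decomposition (compatible with Nakajima's construction) recovers $q^{m}\langle\overrightarrow{\mu}|\overrightarrow{\nu}\rangle$. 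One should also note the already-used fact that the degree-$0$ series for $\An$ is congruent to $1$ mod $(t_1+t_2)$, so the normalization does not disturb the comparison.

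The main obstacle, I expect, is making the $\beta=0$ factorization over charts fully rigorous at the level of \emph{relative} invariants with two boundary conditions — i.e., checking that the $\beta=0$ relative moduli space genuinely splits into a product over the affine charts and that the boundary maps $\epsilon_0,\epsilon_\infty$ respect this product in a way compatible with the Nakajima basis decomposition $\mathcal{F}_{\An} = \bigotimes_i \mathcal{F}_{\CC^2_i}$ (on the $\beta=0$ part). This is morally identical to the cap case handled in Proposition~\ref{cap evaluation}, so in the write-up I would simply cite that argument and the corresponding $\CC^2$ tube statement from \cite{okpandt}, remarking that the only new input over the cap is the second relative condition, which the $\CC^2$ computation already incorporates. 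The $\beta\neq 0$ vanishing is then entirely parallel to the end of the proof of Proposition~\ref{cap evaluation}, with the dimension count $-2m + 2\cdot\frac{1}{2}(\text{something})\le 0$ replaced by the analogous count for two relative fibers; so the bulk of the proof is bookkeeping rather than new geometry.
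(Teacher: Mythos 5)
Your $\beta=0$ analysis (chart factorization plus the $\CC^2$ tube evaluation of \cite{okpandt}) is consistent with what the paper itself uses elsewhere, but the $\beta\neq 0$ vanishing step has a genuine gap. The cap argument you are transplanting works because, with a \emph{single} relative insertion in the fixed-point basis, the pushforward has equivariant degree $(m+\ell(\mu))-2m=\ell(\mu)-m\le 0$, so compactness of the $S\times\mathbf{P}^1$ invariant (polynomiality in $t_1,t_2$) together with Proposition \ref{divisibility} forces vanishing. For the tube the relative moduli space still has virtual dimension $2m$ (independent of the number of relative fibers; the rubber space has rank $2m-1$), but now there are two full relative insertions: in the fixed-point basis the degree is $(m+\ell(\mu))+(m+\ell(\nu))-2m=\ell(\mu)+\ell(\nu)\ge 2>0$, so a polynomial class divisible by $t_1+t_2$ need not vanish; your ``$-2m+2\cdot\tfrac12(\text{something})\le 0$'' is in fact strictly positive, and the argument collapses exactly there. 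If instead you take labels in $\{1,\omega_1,\dots,\omega_n\}$ the degree count is $\le 0$, but then parts labelled by the identity are not compactly supported on $\An$ and the claimed equality $\mathsf{Z}'_{\DT}(X)=\mathsf{Z}'_{\DT}(S\times\mathbf{P}^1)$ fails as a single localization term: partitions can spill onto the boundary of $S$, which is precisely the complication handled by the inductive argument in the proof of Proposition \ref{factorization}. So to salvage your route you would need a tube analogue of that factorization induction to strip off identity-labelled parts before the compactness/divisibility/dimension argument applies to the purely $\omega$-labelled invariants.

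The paper's own proof avoids all of this and is worth comparing: define $\mathsf{A}$ on Fock space by $\langle\overrightarrow{\mu}|\mathsf{A}|\overrightarrow{\nu}\rangle=q^{-m}\langle\overrightarrow{\mu},\overrightarrow{\nu}\rangle^{\DT}$; degenerating the base $\mathbf{P}^1$ into a chain of two rational curves, the degeneration formula gives $\mathsf{A}=\mathsf{A}\circ\mathsf{A}$, and since $\mathsf{A}$ is invertible (its leading term is the nondegenerate classical pairing), $\mathsf{A}=\mathrm{Id}$. This uses no compactification and no dimension count, holds for any threefold of the form $S\times\mathbf{P}^1$, and shows that the $\beta\neq 0$ vanishing you are after is a structural consequence of idempotency plus invertibility rather than of any degree bound.
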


\begin{proof}

This is a general statement for all threefolds of the form $S\times\mathbf{P}^{1}$.
If we define an operator $\mathsf{A}$ on Fock space by the equality
$$\langle \overrightarrow{\mu}| \mathsf{A}|\overrightarrow{\nu}\rangle = 
q^{-m}\langle \overrightarrow{\mu} | \overrightarrow{\nu}\rangle^{\DT},$$
then the degeneration formula applied to the degeneration of $\mathbf{P}^1$ into a chain of two rational
curves gives the identity
$$\mathsf{A} = \mathsf{A}\circ\mathsf{A}.$$
Since we know that $\mathsf{A}$ is invertible, it must be the identity matrix, which
gives the result.
\end{proof}

If we consider the degeneration of $\mathbf{P}^{1}$ with marked points to two rational curves,
with all marked points on a single component, we immediately obtain the following calculation
$$\mathsf{Z}^{\prime}_{\DT}(X)_{\overrightarrow{\mu_{1}},\dots,\overrightarrow{\mu_{k}}} = 
\mathsf{Z}^{\prime}_{\DT}(X)_{\overrightarrow{\mu_{1}},\dots,\overrightarrow{\mu_{k}}, (1)^{m}}.$$ 
In particular, we have
\begin{prop}For $\overrightarrow{\rho} = (1)^{m}$
$$\mathsf{Z}^{\prime}_{\DT}(X)_{\overrightarrow{\mu},\overrightarrow{\rho},\overrightarrow{\nu}} = q^{m}\langle
\overrightarrow{\mu},\overrightarrow{\nu}\rangle.$$
\end{prop}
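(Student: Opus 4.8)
The statement to prove is that for $\overrightarrow{\rho} = (1)^m$ we have $\mathsf{Z}^{\prime}_{\DT}(X)_{\overrightarrow{\mu},\overrightarrow{\rho},\overrightarrow{\nu}} = q^{m}\langle \overrightarrow{\mu},\overrightarrow{\nu}\rangle$, where the three-pointed invariant is now taken with three relative fibers. The plan is to reduce the three-fiber invariant to a two-fiber (tube) invariant via a degeneration of the base $\mathbf{P}^1$, and then invoke Proposition~\ref{tube} together with the cap calculation of Proposition~\ref{cap evaluation}.

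First I would degenerate $\mathbf{P}^1$ with its three marked points $z_1, z_2, z_3$ into a chain of two rational components $\mathbf{P}^1 \cup \mathbf{P}^1$, arranging the marked points so that the component carrying $\overrightarrow{\rho} = (1)^m$ has only that single marked point on it, while the other two marked points (carrying $\overrightarrow{\mu}$ and $\overrightarrow{\nu}$) sit on the other component together with the node. The degeneration formula for relative DT theory then expresses $\mathsf{Z}^{\prime}_{\DT}(X)_{\overrightarrow{\mu},\overrightarrow{\rho},\overrightarrow{\nu}}$ as a sum over a diagonal splitting at the node: the first factor is the relative DT series of $\An\times\mathbf{P}^1$ relative to two fibers with conditions $\overrightarrow{\rho}=(1)^m$ and the gluing condition, and the second factor is the tube invariant with conditions $\overrightarrow{\mu}$, $\overrightarrow{\nu}$, and the gluing condition. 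Concretely, $\mathsf{Z}^{\prime}_{\DT}(X)_{\overrightarrow{\mu},\overrightarrow{\rho},\overrightarrow{\nu}} = \sum_{\overrightarrow{\eta}} \mathsf{Z}^{\prime}_{\DT}(X)_{\overrightarrow{\rho},\overrightarrow{\eta}}\, \Delta(\overrightarrow{\eta})\, \mathsf{Z}^{\prime}_{\DT}(X)_{\overrightarrow{\eta}^{\vee},\overrightarrow{\mu},\overrightarrow{\nu}}$, where $\Delta$ is the gluing matrix. The first factor $\mathsf{Z}^{\prime}_{\DT}(X)_{\overrightarrow{\rho},\overrightarrow{\eta}}$ with $\overrightarrow{\rho}=(1)^m$ is precisely a tube invariant, which by Proposition~\ref{tube} equals $q^m \langle (1)^m \mid \overrightarrow{\eta}\rangle$; this collapses the sum over $\overrightarrow{\eta}$ against the Fock-space pairing, reducing the whole expression to a tube invariant in $\overrightarrow{\mu}$ and $\overrightarrow{\nu}$ up to the relabeling by $(1)^m$, which is the identity class. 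Applying Proposition~\ref{tube} once more yields $q^m \langle \overrightarrow{\mu}, \overrightarrow{\nu}\rangle$, as desired.

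The slicker way to organize this, which is the one the excerpt seems to be pointing toward with the displayed identity $\mathsf{Z}^{\prime}_{\DT}(X)_{\overrightarrow{\mu_1},\dots,\overrightarrow{\mu_k}} = \mathsf{Z}^{\prime}_{\DT}(X)_{\overrightarrow{\mu_1},\dots,\overrightarrow{\mu_k},(1)^m}$, is to observe directly that degenerating off a trivial cap with relative condition $(1)^m$ changes nothing. Indeed, $(1)^m$ corresponds to the identity class $1 \in H^*_T(\Hb_m(\An))$ (it is the fundamental class of the Hilbert scheme), so inserting it as a relative condition on an extra fiber is absorbed: by the degeneration formula and the cap evaluation of Proposition~\ref{cap evaluation}, the cap with condition $(1)^m$ acts as the identity operator on Fock space. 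Hence adding a fourth — here, third — fiber labeled $(1)^m$ to the tube leaves the invariant unchanged, and $\mathsf{Z}^{\prime}_{\DT}(X)_{\overrightarrow{\mu},(1)^m,\overrightarrow{\nu}} = \mathsf{Z}^{\prime}_{\DT}(X)_{\overrightarrow{\mu},\overrightarrow{\nu}} = q^m \langle \overrightarrow{\mu},\overrightarrow{\nu}\rangle$ by Proposition~\ref{tube}.

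The only genuine subtlety — and the step I would be most careful about — is the bookkeeping in the degeneration formula: one must check that $(1)^m$ really does correspond to the identity in the Nakajima basis and that the cap operator from Proposition~\ref{cap evaluation} (with $\overrightarrow{\mu} = (1)^m$, so $m_i = 0$ for all but one index, or more generally the appropriate product of $\delta_{\mu_i,(1)^{m_i}}/m_i!$ terms) indeed assembles into the identity matrix in the pairing, so that summing over the gluing index $\overrightarrow{\eta}$ telescopes cleanly. Everything else — invertibility of the tube operator, the $q^m$ normalization tracking $\chi(\mathcal{O}_Z)$ versus the shift by $m$ — is already established in Propositions~\ref{tube} and~\ref{cap evaluation}, so no new input beyond careful combinatorial accounting is needed.
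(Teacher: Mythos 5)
Your second, ``slicker'' argument is exactly the paper's proof: degenerate the base so that all marked points lie on a single component, use Proposition \ref{cap evaluation} to see that the cap glued in at the node contributes precisely the identity class $(1)^{m}$ (this is the telescoping check you flag at the end, and it is the only real content), obtaining $\mathsf{Z}^{\prime}_{\DT}(X)_{\overrightarrow{\mu_{1}},\dots,\overrightarrow{\mu_{k}}} = \mathsf{Z}^{\prime}_{\DT}(X)_{\overrightarrow{\mu_{1}},\dots,\overrightarrow{\mu_{k}},(1)^{m}}$, and then conclude with Proposition \ref{tube}. That part is correct and is the intended route.

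Your first reduction, however, is circular and should be discarded. If $(1)^{m}$ sits alone on one component, that component carries $(1)^{m}$ together with the node, so it is a tube; the other component carries $\overrightarrow{\mu}$, $\overrightarrow{\nu}$ \emph{and} the node, so it is again a three-fiber invariant, not a tube as you call it. Proposition \ref{tube} says the tube factor is the Fock-space pairing, so the gluing sum $\sum_{\overrightarrow{\eta}} q^{m}\langle (1)^{m}\,|\,\overrightarrow{\eta}\rangle\,\Delta(\overrightarrow{\eta})\,\mathsf{Z}^{\prime}_{\DT}(X)_{\overrightarrow{\eta}^{\vee},\overrightarrow{\mu},\overrightarrow{\nu}}$ simply reconstitutes the relative insertion $(1)^{m}$ and returns the original three-point function $\mathsf{Z}^{\prime}_{\DT}(X)_{\overrightarrow{\mu},(1)^{m},\overrightarrow{\nu}}$; the final step where you ``apply Proposition \ref{tube} once more'' presupposes that the $(1)^{m}$ insertion can be dropped, which is the statement being proved. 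The cap degeneration of your second paragraph is what breaks the circle, because Proposition \ref{cap evaluation} was established independently (compactification, divisibility, and the $\beta=0$ factorization to $\CC^{2}$).
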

This implies Theorems \ref{gw dt theorem} and \ref{hilb dt theorem} for the identity element $\overrightarrow{\rho} = (1)^{m}$
by comparison with \cite{gwan},\cite{hilban}.

\subsection{Proofs of Theorems \ref{gw dt theorem} and \ref{hilb dt theorem}}\label{proof of the main theorems}

We will prove these theorems by computing the partition functions via Theorem \ref{dt theorem}
and comparing with the analogous results from \cite{gwan},\cite{hilban}.

Recall the basis of divisors of $H_{T}^{*}(\Hb_{m}(\An),\QQ)$ given by the cohomology-weighted partitions
$$D, (1,\omega_{i}),\quad i = 1, \dots, n.$$
Let 
$M_{D}^{cl}, M_{(1,\omega_{i})}^{cl}$ be the operators on $\mathcal{F}_{\An}$ 
given by classical multiplication by these basis elements.
We define operators $M_{D}(q,s_{1},\dots,s_{n}), M_{(1,\omega_{i})}(q,s_{1},\dots,s_{n})$ 
on $\mathcal{F}_{\An}$ by
the equalities
\begin{gather*}
\langle \overrightarrow{\mu} | M_{D}| \overrightarrow{\nu}\rangle 
= - q^{-m} \mathsf{Z}^{\prime}_{\DT}(X)_{\overrightarrow{\mu}, (2), \overrightarrow{\nu}}\\
\langle \overrightarrow{\mu} | M_{(1,\omega_{i})}| \overrightarrow{\nu}\rangle 
=  q^{-m} \mathsf{Z}^{\prime}_{\DT}(X)_{\overrightarrow{\mu}, (1,\omega_{i}), \overrightarrow{\nu}}.
\end{gather*}

Finally, we define the following operator using the Heisenberg algebra operators 
defined in section~\ref{Nakajima ops}:
$$\Omega_{0} = -\sum_{k\geq 1} \left[(n+1)t_{1}t_{2}\pp_{-k}(1)\pp_{k}(1) +
\sum_{i=1}^{n} \pp_{-k}({E_{i}})
\pp_{k}(\omega_{i})\right]\log\left(\frac{1-(-q)^k}{1-(-q)}\right).$$

The following proposition gives an evaluation for our partition functions.  By comparing it
with Theorems 2.1 and 2.2 in \cite{hilban}, it completes the proofs of Theorems~\ref{gw dt theorem} and \ref{hilb dt theorem}.

For the convenience of the reader let us recall the formula for the operator $\Omega_+$:
\begin{gather*}
\Omega_{+}:=\sum_{1\le i<j\le n+1}\sum_{k\in\ZZ} :e_{ji}(k)
e_{ij}(-k):\log(1-(-q)^k s_i\dots s_{j-1}).
\end{gather*}

\begin{prop}

\begin{gather*}
M_{D} = M_{D}^{cl} + (t_1+t_2) q \frac{\partial}{\partial q}(\Omega_{0} + \Omega_{+})\\
M_{(1,\omega_{i})} = M_{(1,\omega_{i})}^{cl} + (t_1+t_2)s_{i}
\frac{\partial}{\partial s_{i}} \Omega_{+}
\end{gather*}

\end{prop}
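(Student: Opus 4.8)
The plan is to reduce the divisor operators $M_D$ and $M_{(1,\omega_i)}$ to the rubber operator $\Theta^{\DT}$ evaluated in Theorem \ref{dt theorem}, via the degeneration formula, and then to perform the bookkeeping that converts the logarithms of $\Omega_+$ and the identity term $F$ into the stated $q\partial_q$ and $s_i\partial_{s_i}$ expressions together with the classical pieces $M^{cl}$ and the extra operator $\Omega_0$. First I would degenerate $\mathbf{P}^1$ with the three marked points $z_1,z_2,z_3$ into a chain: a central $\mathbf{P}^1$ carrying the two relative insertions $\overrightarrow{\mu},\overrightarrow{\nu}$ attached to a ``cap'' carrying the divisor insertion $\overrightarrow{\rho}$ (either $(2)(1)^{m-2}$ or $(1,\omega_i)(1)^{m-1}$). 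Using Proposition \ref{tube}, the tube factor is the identity, so the whole partition function equals the corresponding cap invariant; and since the cap with a single marked point on a rigid $\mathbf{P}^1$ contributes its $\beta=0$ part purely classically (as in Proposition \ref{cap evaluation}), the $\beta\neq 0$ contributions come entirely from rubber components in the boundary. Collecting these rubber contributions expresses $M_D$ (resp. $M_{(1,\omega_i)}$), up to the classical term $M^{cl}$, in terms of a $\sigma_0$-type descendent insertion against a rubber integral, which by the rigidification lemma is exactly a derivative of $\langle\overrightarrow{\mu},\overrightarrow{\nu}\rangle^{\DT,\sim}$.

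Next I would match derivatives precisely. The insertion $(1,\omega_i)(1)^{m-1}$ corresponds, after rigidification, to applying $s_i\partial_{s_i}$ to the rubber partition function, since $(\omega_i\cdot\beta)$ is the exponent of $s_i$ in $s^\beta$; thus $M_{(1,\omega_i)} - M^{cl}_{(1,\omega_i)} = (t_1+t_2)\, s_i\partial_{s_i}\Theta^{\DT}/(t_1+t_2) = (t_1+t_2)\, s_i\partial_{s_i}\Omega_+$ once one checks that the identity term $F$ in Theorem \ref{dt theorem}, being independent of the individual $s_i$ only through products $s_i\cdots s_{j-1}$, still contributes, but that its contribution is absorbed — here I expect a short computation showing $s_i\partial_{s_i}$ acting on $\sum F(q,s_i\cdots s_{j-1})\cdot\mathrm{Id}$ produces only terms already accounted for, or rather that the diagonal $F$ term does not survive in this normalized divisor operator because of how the cap degeneration strips the degree-$0$ normalization. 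For $M_D$, the insertion $-(2)(1)^{m-2}$ rigidifies to a $ch_3$-type descendent whose effect on the rubber series is $q\partial_q$ (matching $\deg(f_{tw}^*D) = \chi - m$ from \eqref{twistordegrees}, i.e. the Euler-characteristic grading carried by $q$), so $M_D - M^{cl}_D = (t_1+t_2)\, q\partial_q\Theta^{\DT}/(t_1+t_2)$.

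The remaining task is to explain the appearance of $\Omega_0$ in the formula for $M_D$ but not for $M_{(1,\omega_i)}$. This comes from the $F(q,s)$ identity term: $q\partial_q F(q,s) = \sum_{k\geq 0}(k+1) \cdot k(-1)^{k+1}(-q)^{k+1}s/(1-(-q)^{k+1}s)$ does not vanish, and more importantly the rubber series carries, in addition to the $\beta\neq 0$ part $\langle\cdot\rangle^{\DT,\sim}_+$ packaged in $\Theta^{\DT}$, a $\beta=0$ piece (evaluated via \cite{okpandt}) whose $q$-derivative is precisely $(t_1+t_2)q\partial_q\Omega_0$ — the operator $\Omega_0$ being built from Nakajima operators $\pp_{-k}(1)\pp_k(1)$ and $\pp_{-k}(E_i)\pp_k(\omega_i)$ with the $\log\bigl((1-(-q)^k)/(1-(-q))\bigr)$ coefficient characteristic of the $\CC^2$ cap/tube calculation. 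Since applying $s_i\partial_{s_i}$ kills any term with no $s$-dependence, $\Omega_0$ drops out of $M_{(1,\omega_i)}$, which is why it appears only in $M_D$. I expect the main obstacle to be this last step: carefully tracking the $\beta=0$ rubber contribution and the degree-$0$ normalization through the cap/tube degeneration so that the coefficient functions come out exactly as $\log\bigl((1-(-q)^k)/(1-(-q))\bigr)$, with no spurious constants or sign errors, and confirming that the $F$-term of Theorem \ref{dt theorem} recombines with this $\beta = 0$ piece rather than producing an extra diagonal operator. Once the bookkeeping of which insertion induces which derivative is pinned down, the rest is the substitution of Theorem \ref{dt theorem} and term-by-term comparison with Theorems 2.1 and 2.2 of \cite{hilban}.
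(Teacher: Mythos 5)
Your overall route is the same as the paper's: rigidify so that the divisor insertions become $q\frac{\partial}{\partial q}$ and $s_i\frac{\partial}{\partial s_i}$ derivatives of the rubber series, substitute Theorem \ref{dt theorem}, and get $M^{cl}$ together with $\Omega_0$ from the $\beta=0$ part by factoring over the fixed points into $\CC^2$ contributions. However, two steps as written do not go through. First, your opening degeneration is circular: placing $\overrightarrow{\mu},\overrightarrow{\nu}$ on one component and $\overrightarrow{\rho}$ on a two-pointed component and invoking Proposition \ref{tube} merely collapses the gluing sum back to the original three-point function; it does not reduce the divisor operator to a cap invariant or to ``rubber components in the boundary.'' What one actually degenerates is the rigidified descendent invariant $\langle\overrightarrow{\mu}|\sigma_0(\iota_*\omega_i)|\overrightarrow{\nu}\rangle^{\DT}_{\beta}$ (and, for $D$, the invariant with insertion $-\sigma_1(F)$, $F$ a fiber of the projection to $\mathbf{P}^1$ --- this choice, justified by the rubber calculus of \cite{okpandt}, is what produces $q\partial_q$; the degrees \eqref{twistordegrees} concern twistor components and are not by themselves the needed identity). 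The descendent is pushed onto a component carrying relative conditions $\overrightarrow{\rho}$ and $(1)^m$; Proposition \ref{factorization} forces $\overrightarrow{\rho}=(1)^m$ on the channel where that component carries nonzero curve class, and Proposition \ref{tube} then forces the degree on the other component to vanish.

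Second, and this is the genuine gap you yourself flag: the diagonal $F$-term is not removed by ``stripping the degree-$0$ normalization,'' and $s_i\partial_{s_i}\Theta^{\DT}$ is \emph{not} equal to $(t_1+t_2)s_i\partial_{s_i}\Omega_+$. The degeneration just described produces exactly one additional term, from the channel $\overrightarrow{\rho}=(1)^m$, $\beta_1=0$, $\beta_2\neq 0$, namely $\langle\overrightarrow{\mu},\overrightarrow{\nu}\rangle\,\langle\emptyset|\sigma_0(\iota_*\omega_i)|\emptyset\rangle^{\DT,\prime}_{\beta}$, so that
\[
M_{(1,\omega_i)}-M^{cl}_{(1,\omega_i)}
= s_i\frac{\partial}{\partial s_i}\Theta^{\DT}
-\Bigl\langle\emptyset\Bigm| s_i\frac{\partial}{\partial s_i}\Theta^{\DT}\Bigm|\emptyset\Bigr\rangle\cdot\mathrm{Id}.
\]
Since $\langle\emptyset|\Omega_+|\emptyset\rangle=0$ (Proposition \ref{reconstruction}), the subtracted vacuum expectation equals $(t_1+t_2)\,s_i\partial_{s_i}\sum_{a<b}F(q,s_a\cdots s_{b-1})$ and cancels precisely the identity term of Theorem \ref{dt theorem}; the evaluation of this vacuum channel is exactly the content of the vacuum-expectation computation. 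Without identifying this extra diagonal term your formula is off by a nonzero multiple of the identity, and the same correction is needed in the $M_D$ case (where the $\beta=0$ quantum part, handled separately by the $\CC^2$ factorization as you say, supplies $(t_1+t_2)q\partial_q\Omega_0$).
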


\begin{proof}
As before the $\beta = 0$ contributions - both the classical part and $\Omega_{0}$ -
can be deduced from the case of $\CC^{2}$.  The DT invariants factor
into contributions from each fixed point $p_{i}$ and, as we explain in detail in section~6 of 
\cite{hilban}, $\Omega_{0}$ factors into the contribution of the rubber operator for $\CC^{2}$.

For $\beta \ne 0$, we argue as follows in the case of $(1,\omega_{i})$.
Using the rigidification lemma, we have
$$\begin{aligned}
(\omega_{i}\cdot \beta) \langle \overrightarrow{\mu}, &\overrightarrow{\nu}\rangle^{\DT,\sim}_{\beta}
=  \langle \overrightarrow{\mu}| \sigma_{0}(\iota_{*}\omega_{i})| \overrightarrow{\nu}\rangle^{\DT}_{\beta}
\\
&= \sum_{\stackrel{\overrightarrow{\rho}}{\beta_1+\beta_2=\beta}}
\langle \overrightarrow{\mu}, \overrightarrow{\nu}, \overrightarrow{\rho}^{\vee}\rangle^{\DT, \prime}_{\beta_1}
\Delta_{\rho} q^{-m} \langle \overrightarrow{\rho},  \sigma_{0}(\iota_{*}\omega_{i}), (1)^{m}\rangle^{\DT,\prime}_{\beta_2}.
\end{aligned}
$$
Here $\overrightarrow{\rho}^{\vee}$ denotes cohomology-weighted partitions with labels in a 
Poincare-dual basis and $\Delta_{\rho}$ denote combinatorial gluing terms in the degeneration formula.

If $\beta_2 \ne 0$, Proposition \ref{factorization} forces $\overrightarrow{\rho} = (1)^{m}$
which from Proposition \ref{tube} forces $\beta_1 = 0$.  If $\beta_2 = 0$, the second term
is given by classical multiplication on $\Hb_{\An}$.  The resulting expression
is
$$
\langle \overrightarrow{\mu}, (1,\omega_{i}), \overrightarrow{\nu}\rangle^{\DT,\prime}_{\beta}
+ \langle \overrightarrow{\mu}, \overrightarrow{\nu}\rangle 
\langle \emptyset |  \sigma_{0}(\iota_{*}\omega_{i})| \emptyset\rangle^{\DT,\prime}_{\beta}.
$$
Summing over $\beta\ne 0$ gives
$$M_{(1,\omega_{i})} - M_{(1,\omega_{i})}^{cl} = 
s_{i}
\frac{\partial}{\partial s_{i}} \Theta^{\DT} - \langle \emptyset |s_{i}
\frac{\partial}{\partial s_{i}} \Theta^{\DT} |\emptyset\rangle\cdot \mathrm{Id}
$$
which gives the result.

In the case of $D$, the analogous argument applies.  The only difference is that we
rigidify with $-\sigma_{1}(F)$ (here $F$ is a fiber of the projection on $\mathbf{P}^1$) 
which yields $q\frac{\partial}{\partial q} \Theta^{\DT}$.  
\end{proof}

As an immediate corollary of this proposition,
we prove a statement of the GW/DT for primary insertions in the relative theory.
In the equation below, the right-hand side refers to the partition function, given by a
Laurent series in $u,s_{1},\dots,s_{n}$,
 for the relative Gromov-Witten theory of
$\An\times\mathbf{P}^{1}$ with primary insertions.

\begin{cor}
Given divisor classes $\omega_{k_{1}},\dots,\omega_{k_{l}}$, the following partition functions
are equal, after the change of variables $q= -e^{iu}$,
$$
(-q)^{-m}\langle \overrightarrow{\mu}| \prod_{i=1}^{l}\sigma_{0}(\iota_{\ast}\omega_{k_{i}})|\overrightarrow{\nu}\rangle^{\DT}
=(-iu)^{l(\mu)+l(\nu)}
\langle \overrightarrow{\mu}|\prod_{i=1}^{l}\tau_{0}(\iota_{\ast}\omega_{k_{i}})|\overrightarrow{\nu}
\rangle^{\mathrm{GW}}.
$$
In particular, the left-hand side is a rational function of $q,s_{1},\dots,s_{n}$.
\end{cor}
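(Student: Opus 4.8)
The plan is to reduce the corollary to the Proposition above, mimicking the way the rigidification lemma reduces a single descendent insertion to a rubber invariant. First I would degenerate the base $\mathbf{P}^1$, carrying its two relative points labelled $\overrightarrow{\mu}$ and $\overrightarrow{\nu}$, into a chain of $l$ rational curves and move the fiber classes $\iota_*\omega_{k_i}$ so that exactly one descendent sits on each component. Each component is $X$ relative to two fibers, whose degree-$0$ series is trivial (the exponent $(k-2)$ vanishes when $k=2$), and the gluing terms are the inverse Gram matrix of the Fock-space pairing; since Euler characteristics add with the usual $-m$ shift at each of the $l-1$ nodes, the degeneration formula gives
$$q^{-m}\langle\overrightarrow{\mu}|\prod_{i=1}^{l}\sigma_0(\iota_*\omega_{k_i})|\overrightarrow{\nu}\rangle^{\DT} = \langle\overrightarrow{\mu}|\, N_{\omega_{k_1}}\cdots N_{\omega_{k_l}}\,|\overrightarrow{\nu}\rangle,$$
where $N_\omega$ is the operator on $\mathcal{F}_{\An}$ determined by $\langle\overrightarrow{\mu}|N_\omega|\overrightarrow{\nu}\rangle = q^{-m}\langle\overrightarrow{\mu}|\sigma_0(\iota_*\omega)|\overrightarrow{\nu}\rangle^{\DT}$ for the tube, and the bracket on the right is the Fock inner product. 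Since cohomology classes commute, the $N_{\omega_{k_i}}$ commute and the ordering is immaterial; note also that we work inside the single weight space fixed by $|\overrightarrow{\mu}|=|\overrightarrow{\nu}|=m$.

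Next I would identify $N_\omega$ and deduce rationality. Applying the rigidification lemma to $\sigma_0(\iota_*\omega_i)$ term by term in $\beta$ turns it into $(\omega_i\cdot\beta)$ times a rubber invariant, so summing over $\beta$ and comparing with the computation carried out in the proof of the Proposition yields $N_{\omega_i} = M_{(1,\omega_i)} + s_i\tfrac{\partial}{\partial s_i}\langle\emptyset|\Theta^{\DT}|\emptyset\rangle\cdot\mathrm{Id}$, the scalar being the vacuum term that the Proposition subtracts. By the Proposition and Proposition \ref{vacuum}, both summands have matrix entries that are rational functions of $q,s_1,\dots,s_n$ in this weight space, hence so is $\prod N_{\omega_{k_i}}$ and so is the left-hand side of the corollary; this already proves the rationality assertion, with no reference to Gromov-Witten theory.

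Finally, for the GW/DT equality I would run the identical degeneration on the Gromov-Witten side, expressing $\langle\overrightarrow{\mu}|\prod\tau_0(\iota_*\omega_{k_i})|\overrightarrow{\nu}\rangle^{GW}$ as a composition of operators $N^{GW}_{\omega_{k_i}}$ built from the GW rubber calculus developed in \cite{gwan}. Theorem \ref{gw dt theorem}, applied to the single divisor-labelled tube $\langle\overrightarrow{\mu},(1,\omega_i),\overrightarrow{\nu}\rangle$ — which carries the prefactor $(-iu)^{l(\mu)+l(\nu)}$ because $l\big((1,\omega_i)(1,1)^{m-1}\big)=m$ — together with the GW vacuum evaluation of \cite{gwan}, identifies $N^{GW}_\omega$ with $N^{\DT}_\omega$ after $q=-e^{iu}$ up to the prescribed powers of $-iu$. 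Composing $l$ such operators and tracking the $(-iu)$-powers — each component contributing the lengths of its two boundary conditions minus $m$ and each of the $l-1$ interior relative conditions contributing $+m$, exactly the telescoping of the divisor case of Theorem \ref{gw dt theorem} — leaves the net factor $(-iu)^{l(\mu)+l(\nu)}$ and gives the corollary. The only genuinely delicate step is this bookkeeping of $(-iu)$-powers and signs through the degeneration; everything else is formal once the Proposition is in hand.
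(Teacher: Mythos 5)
Your route is essentially the paper's: degenerate the base $\mathbf{P}^1$ so that one primary insertion sits on each component, reduce to a single $\sigma_0(\iota_*\omega_i)$ on a tube, identify that operator with $M_{(1,\omega_i)}$ up to the vacuum scalar coming from $\langle\emptyset|\Theta^{\DT}|\emptyset\rangle$, and match the vacuum term and the divisor operator with their Gromov--Witten counterparts (Theorem \ref{gw dt theorem} for the divisor tube and the vacuum correlator of \cite{gwan}). For $\beta\neq 0$ your operator identity $N_{\omega_i}=M_{(1,\omega_i)}+s_i\frac{\partial}{\partial s_i}\langle\emptyset|\Theta^{\DT}|\emptyset\rangle\cdot\mathrm{Id}$ is exactly the relation established in the proof of the proposition, and the $(-iu)$-bookkeeping through the degeneration is handled the same way the paper handles it (compatibility of the GW/DT change of variables with the degeneration formula).

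The genuine gap is the $\beta=0$ contribution, which the paper disposes of separately by citing \cite{okpandt} before restricting to $\beta\neq 0$. Your derivation applies the rigidification lemma ``term by term in $\beta$,'' but rigidification produces the factor $(\omega_i\cdot\beta)$ and gives no information at $\beta=0$ (taken literally it would say that part vanishes), while your final identity implicitly asserts that the $\beta=0$ part of the descendent tube is classical multiplication by $(1,\omega_i)$, since the vacuum scalar has no $s$-independent term. Neither is correct as stated: at $\beta=0$ the invariant factors over the $T$-fixed points of $\An$ into local $\CC^2\times\mathbf{P}^1$ tube integrals with a single descendent, and these carry genuine $q$-dependence (on the GW side, genuine $u$-dependence from multiple covers of the vertical fibers), so both the rationality assertion and the GW/DT matching for the $s^0$-terms require the $\CC^2$ computations of \cite{okpandt}. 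You need either to quote those results for the $\beta=0$ part, as the paper does, or to restrict your operator identity to the $\beta\neq 0$ summands and treat the degree-zero piece by the factorization into affine charts; without that, the claim that rationality follows ``with no reference to Gromov--Witten theory'' and the final matching are both incomplete at $s_1=\dots=s_n=0$.
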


\begin{proof}
Since the $\beta=0$ case can be deduced from \cite{okpandt}, we focus on the $\beta\ne 0$ contribution.  By degenerating the base $\mathbf{P}^{1}$ to a chain of rational curves, with a primary insertion on each component, it suffices to show this for a single primary insertion.  In this case,
the only difference between this evaluation and the operator $M_{(1,\omega_{k})}$
is the vacuum expectation, given by
$$
s_{k}\frac{\partial}{\partial s_{k}}\sum_{1\le i< j \le n+1} F(q, s_{i}\dots s_{j-1})\cdot \mathrm{Id}.$$
This can be matched directly with the vacuum correlator from Proposition $3.6$ in \cite{gwan}.
\end{proof} 

\subsection{Generation conjecture}\label{generation conjecture}

In this section, we state
a nondegeneracy conjecture on the operators $M_{D}, M_{(1,\omega_{i})}$
that will allow us to prove Theorems \ref{gw dt theorem} and \ref{hilb dt theorem} unconditionally.

We first observe that the relative DT theory of $X$ defines a ring structure $\circ$ on
$$H_{T}^{\ast}(\Hb_{m}(\An),\QQ)\otimes\QQ(t_1,t_2)((q,s_{1},\dots,s_{n}))$$
by the equation
$$\langle \overrightarrow{\mu}, \overrightarrow{\rho}\circ \overrightarrow{\nu}\rangle
= q^{-m}\langle \overrightarrow{\mu}, \overrightarrow{\rho}, \overrightarrow{\nu}\rangle^{\DT,'}.$$
It follows from the degeneration formula that $\circ$ defines a graded-commutative, associative product;
moreover it is easy to see that it is a ring deformation of the classical $T$-equivariant cohomology
of $\Hb_{m}(\An)$ with $(1)^{m}$ as a unit (by Proposition \ref{tube}).

In particular, the operators $M_{D}, M_{(1,\omega_{i})}$ commute with each other; in \cite{hilban}, we make the following conjecture about their joint spectrum.

%THIS SHOULD BE UNNUMBERED
\begin{conj}\label{generation conj}
The joint eigenspaces for the operators $M_{D},
M_{(1,\omega_{i})}$ are one-dimensional for all $m>0$.
\end{conj}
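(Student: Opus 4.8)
The plan is to reduce Conjecture~\ref{generation conj} to the statement that the divisor classes $D,(1,\omega_{1}),\dots,(1,\omega_{n})$ generate the deformed ring, and then to establish this by diagonalizing the explicit operators of Theorem~\ref{dt theorem}. Write $K=\QQ(t_1,t_2)((q,s_{1},\dots,s_{n}))$ and let $A_{m}=H_{T}^{*}(\Hb_{m}(\An),\QQ)\otimes K$ with the product $\circ$. Since the $T$-fixed points of $\Hb_{m}(\An)$ are isolated, localization identifies the classical ring $H_{T}^{*}(\Hb_{m}(\An),\QQ)\otimes\QQ(t_1,t_2)$ with a product of copies of $\QQ(t_1,t_2)$, hence it is semisimple; since $\circ$ deforms the classical cup product with unit $(1)^{m}$ (Proposition~\ref{tube} and its consequences), semisimplicity persists over $K$ — the relevant discriminant, being nonzero classically, is a nonzero element of $K$ — so $A_{m}$ is semisimple. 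Consequently $A_{m}\otimes\overline{K}\cong\overline{K}^{\,\dim_{K}A_{m}}$, the commuting operators $M_{D},M_{(1,\omega_{i})}$ are simultaneously diagonalizable, and all their joint eigenspaces are one-dimensional if and only if the subalgebra $B_{m}\subseteq A_{m}$ they generate is all of $A_{m}$; equivalently, the joint spectrum must consist of $\dim_{K}A_{m}$ distinct points of $\overline{K}^{\,n+1}$. For $m=1$ this is immediate, since $\Hb_{1}(\An)=\An$ and the tuple of restrictions of $(1,\omega_{1}),\dots,(1,\omega_{n})$ already separates the $n+1$ fixed points by injectivity of localization; for $m\ge 2$ the classical restrictions do not separate, so the conjecture genuinely relies on the quantum deformation.

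Next I would compute the joint spectrum using the operator formula. By the Proposition of Section~\ref{proof of the main theorems}, $M_{D}=M_{D}^{cl}+(t_1+t_2)q\partial_{q}(\Omega_{0}+\Omega_{+})$ and $M_{(1,\omega_{i})}=M_{(1,\omega_{i})}^{cl}+(t_1+t_2)s_{i}\partial_{s_{i}}\Omega_{+}$, so everything is governed by the affine operators $\Omega_{0},\Omega_{+}$ on the basic representation $V_{\Lambda}$ of $\gotg=\widehat{\mathfrak{gl}}(n+1)$. I expect these to be the Hamiltonians of a quantum integrable system on $V_{\Lambda}$ — a $\widehat{\mathfrak{gl}}(n+1)$-analogue of the trigonometric Calogero--Sutherland system diagonalized by Jack polynomials in the $\CC^{2}$ case of \cite{okpanhilb,okpandt} — with a joint eigenbasis indexed by $(n+1)$-colored partitions $\overrightarrow{\rho}$ of $m$ deforming the fixed-point basis $[J_{\overrightarrow{\rho}}]$, and with eigenvalues given by explicit rational functions $\lambda_{D}(\overrightarrow{\rho}),\lambda_{i}(\overrightarrow{\rho})$ in $q,s_{1},\dots,s_{n},t_1,t_2$. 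Granting such a description, one separates the spectrum by a leading-order comparison: for $\overrightarrow{\rho}\ne\overrightarrow{\rho}'$ first compare the classical limits (restrictions of $D,(1,\omega_{i})$ to fixed points) and, where these agree, the first nonvanishing quantum correction, which comes from the $k=0$ and $|k|=1$ terms of $\Omega_{+},\Omega_{0}$ and is governed by Nakajima-type operators such as $\pp_{-1}(\omega_{i})\pp_{1}(E_{i})$ and $e_{i+1,i}(0)e_{i,i+1}(0)$. Running this as an induction that turns on $q$ and the $s_{i}$ successively would refine the classical joint eigenspaces down to lines.

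The alternative, which avoids a closed form for the eigenvalues, is to regard the joint spectrum as an étale cover of $\mathrm{Spec}\,K$ and to compute its degree by specializing the parameters to a single well-chosen point (not the origin, where the classical restrictions degenerate) at which the $\dim_{K}A_{m}$ eigenvalue-tuples are already distinct, whence semicontinuity gives the generic statement. In either approach the main obstacle is the diagonalization of $\Omega_{0}+\Omega_{+}$: one must produce the integrable system's eigenvectors and eigenvalues and prove that the resulting deformation of the fixed-point basis is nondegenerate. This does not follow formally from the $\CC^{2}$ case, where only $M_{D}$ is present and the input is the known simplicity of the spectrum of quantum multiplication by the boundary divisor on $\Hb_{m}(\CC^{2})$; here one needs the full $\widehat{\mathfrak{gl}}(n+1)$ system and genuine control of the iterated degenerate perturbation theory — equivalently, of the monodromy of the spectral cover over the $(q,s_{1},\dots,s_{n})$-torus — so as to guarantee that the successive refinements terminate in one-dimensional pieces.
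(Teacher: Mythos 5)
You are attempting to prove a statement that the paper itself leaves open: this is Conjecture~\ref{generation conj}, and the authors state explicitly that they are unable to prove it, offering only the weaker evidence that $M_{D}-M_{D}^{cl}$ has distinct eigenvalues. Your proposal does not close that gap. The first part of your argument — semisimplicity of the deformed ring $\circ$ over $K$ via nonvanishing of the discriminant, and the equivalence ``joint eigenspaces are lines $\iff$ the eigenvalue tuples of $M_{D},M_{(1,\omega_{i})}$ are pairwise distinct $\iff$ these operators generate the whole algebra'' — is sound and is essentially the same framework the paper uses in the reverse direction in its Corollary* (conjecture $\Rightarrow$ generation $\Rightarrow$ identification with small quantum cohomology). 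But the substantive step is exactly the one you introduce with ``Granting such a description'': you posit a joint eigenbasis of $\Omega_{0}+\Omega_{+}$ indexed by $(n+1)$-colored partitions with explicit eigenvalues, and you give no construction of it, no formula for $\lambda_{D}(\overrightarrow{\rho}),\lambda_{i}(\overrightarrow{\rho})$, and no argument that the tuples are distinct. That assumed description \emph{is} the conjecture (up to the routine semisimplicity bookkeeping), so the proposal is circular at its core; your own closing paragraph concedes this.

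The two fallback routes you sketch do not repair this. The degenerate-perturbation-theory induction starting from the classical limit is precisely where the difficulty lives: for $m\ge 2$ the classical joint eigenspaces of $M_{D}^{cl},M_{(1,\omega_{i})}^{cl}$ are highly degenerate, and controlling whether the corrections from the $k=0,|k|=1$ terms of $\Omega_{+}$ (and the iterated higher-order terms in $q,s_{i}$) fully resolve the degeneracy is an open problem — nothing in Theorem~\ref{dt theorem} or in \cite{hilban} supplies the needed spectral simplicity, and, as you note, the $\CC^{2}$ case only gives simplicity for the single operator $M_{D}$. Likewise, the ``specialize at a well-chosen point and use semicontinuity'' argument requires exhibiting one parameter value at which the $\dim_{K}A_{m}$ eigenvalue tuples are already distinct; producing such a point is again equivalent to the statement being proved at that specialization, and no candidate is given. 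So what you have is a correct reduction of the conjecture to a nondegeneracy statement about the spectrum of the explicit $\widehat{\mathfrak{gl}}(n+1)$ operators, together with an unproven assertion of that nondegeneracy — not a proof, and consistent with the fact that the paper records this statement as a conjecture.
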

Although we are unable to prove this conjecture, we can prove the suggestive statement
that $M_{D}(q,s_{1},\dots,s_{n}) - M_{D}^{cl}$ has distinct eigenvalues.

Assuming this conjecture, we have the following immediate corollary.

\begin{corstar}
Under the above conjecture,
 the divisor classes $D, (1,\omega_{i})$ generate the DT-ring deformation of $H_{T}^{\ast}(\Hb(\An),\QQ)$ over $\QQ(t_1,t_2,q,s_{1},\dots,s_{n})$.
Moreover, the DT product $\circ$ is identical to the quantum product defined by the
small quantum cohomology of $\Hb_{m}(\An)$.
\end{corstar}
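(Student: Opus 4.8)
The plan is to deduce the corollary from the conjecture of Section~\ref{generation conjecture} by a short linear-algebra argument whose only geometric input, beyond Proposition~\ref{tube}, is the unconditional case of Theorem~\ref{hilb dt theorem} in which the middle insertion is a divisor. First I would realize the DT ring as a regular representation. Fix $m$, let $K$ be the coefficient field $\QQ(t_1,t_2)((q,s_1,\dots,s_n))$, set $V = H_T^{*}(\Hb_m(\An),\QQ)\otimes K$, and write $N = \dim_K V$. By Proposition~\ref{tube} the product $\circ$ is unital with unit $(1)^m$, so the regular representation $\mathrm{reg}\colon (V,\circ)\to \mathrm{End}_K(V)$, $a\mapsto(b\mapsto a\circ b)$, is injective, since $\mathrm{reg}(a)(1)^m = a$; its image $\mathcal{A}$ is thus a commutative unital $K$-subalgebra of $\mathrm{End}_K(V)$ with $\dim_K\mathcal{A}=N$. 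Under $\mathrm{reg}$ the classes $D,(1,\omega_1),\dots,(1,\omega_n)$ map to the operators $M_D,M_{(1,\omega_1)},\dots,M_{(1,\omega_n)}$, so the $K$-subalgebra $\mathcal{B}$ these operators generate together with $\mathrm{Id}$ satisfies $\mathcal{B}\subseteq\mathcal{A}$, and the claim that $D,(1,\omega_i)$ generate the DT ring is precisely the equality $\mathcal{B}=\mathcal{A}$.

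Next I would invoke the conjecture. Passing to the algebraic closure $\bar K$, the conjecture states that the commuting operators $M_D,M_{(1,\omega_i)}$ have one-dimensional joint generalized eigenspaces; since a one-dimensional generalized eigenspace is an ordinary eigenspace, each of these operators acts on each such line as a scalar, so the family is simultaneously diagonalizable and the joint eigenvalues separate the $N$ common eigenlines. Hence the $\bar K$-algebra generated by this family is the full algebra of operators diagonal in the common eigenbasis, of dimension $N$; but that algebra is $\mathcal{B}\otimes_K\bar K$, so $\dim_K\mathcal{B}=N=\dim_K\mathcal{A}$, and $\mathcal{B}\subseteq\mathcal{A}$ forces $\mathcal{B}=\mathcal{A}$. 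Transporting back through $\mathrm{reg}$, this says $D,(1,\omega_1),\dots,(1,\omega_n)$ generate $(V,\circ)$ as a $K$-algebra; using the rationality of divisor correlators from Theorems~\ref{gw dt theorem} and~\ref{hilb dt theorem} one then checks the same argument runs over $\QQ(t_1,t_2,q,s_1,\dots,s_n)$, which is the form claimed.

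Finally I would identify $\circ$ with the small quantum product $*$ of $\Hb_m(\An)$. By the unconditional part of Theorem~\ref{hilb dt theorem}, for each $\xi\in\{D,(1,\omega_1),\dots,(1,\omega_n)\}$ and all $\overrightarrow{\mu},\overrightarrow{\nu}$ one has $\langle\overrightarrow{\mu},\xi\circ\overrightarrow{\nu}\rangle = q^{-m}\mathsf{Z}^{\prime}_{\DT}(X)_{\overrightarrow{\mu},\xi,\overrightarrow{\nu}} = \langle\overrightarrow{\mu},\xi,\overrightarrow{\nu}\rangle^{\Hb} = \langle\overrightarrow{\mu},\xi*\overrightarrow{\nu}\rangle$, so by nondegeneracy of the Poincare pairing $\xi\circ v=\xi*v$ for all $v$; that is, $M_\xi$ is simultaneously the operator of $\circ$-multiplication and of $*$-multiplication by $\xi$. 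An induction on length then shows every $\circ$-monomial in $D,(1,\omega_i)$ coincides with the corresponding $*$-monomial: if $x$ is such a $\circ$-monomial already known to agree with its $*$-counterpart, then $\xi\circ x=M_\xi(x)=\xi*x$. By the previous paragraph these $\circ$-monomials span $V$, so $D,(1,\omega_i)$ generate $(V,*)$ as well, and writing arbitrary $a,b\in V$ as polynomials in $D,(1,\omega_i)$ with respect to $\circ$ — which are then equally valid expressions with respect to $*$ — and multiplying gives $a\circ b=a*b$. Since both products have unit $(1)^m$ (for $*$ the fundamental class of $\Hb_m(\An)$ under the Nakajima identification), $\circ$ and $*$ agree.

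The only genuinely hard input here is the conjecture of Section~\ref{generation conjecture} itself, which is assumed; granting it, the steps that need real care are the base change to $\bar K$ and descent in the diagonalization argument, and the faithfulness and dimension count for the regular representation of $(V,\circ)$, both immediate from Proposition~\ref{tube}. I do not expect any further obstacle.
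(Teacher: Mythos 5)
Your proposal is correct and follows essentially the same route as the paper: the conjecture gives a simple joint spectrum, so the commuting divisor operators generate the full $N$-dimensional (diagonal) multiplication algebra — your interpolation/regular-representation packaging is just a spelled-out version of the paper's ``joint eigenvectors are idempotents plus Vandermonde'' — and then the unconditional divisor case of Theorem \ref{hilb dt theorem} identifies the divisor operators with quantum multiplication, which together with generation forces $\circ$ to coincide with the small quantum product. Note only that, like the paper (which simply calls the DT ring ``semisimple''), you read the conjectured one-dimensionality of joint eigenspaces as applying to generalized eigenspaces, i.e.\ as including semisimplicity of the operators.
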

\begin{proof}
For the first claim, we observe that the joint eigenvectors for $M_{D}, M_{(1,\omega_{i})}$
are idempotents for the semisimple DT-ring deformation.  It then follows from the Vandermonde formula that the vectors
$$M_{D}^{a}\prod_{i} M_{(1,\omega_{i})}^{b_{i}} (1^{m}),\quad a, b_{i}\geq 0$$
have full span.
For the second claim, since the operators here coincide with quantum multiplication by
divisors $D, (1,\omega_{i})$ and these multiplication operators generate the whole ring, it is clear
that quantum multiplication by an element $\overrightarrow{\rho}$ coincides with the
operator $\overrightarrow{\rho}\circ$.
\end{proof}
Since the structure constants of quantum cohomology are given by three-point functions, this completes the proof of Theorem \ref{hilb dt theorem} under the assumption of the generation conjecture.

We now give the proof of $\mathrm{Theorem}^{*}$~\ref{gw dt with conjecture}.
\begin{proof}

We first handle the case of $k \leq 3$ relative fibers using the technique above.
For the rationality claim, given $\overrightarrow{\rho}$, it follows from the generation conjecture that there exist rational functions
$c_{a,b_{1},\dots,b_{n}}(q,s_{1},\dots,s_{n})$ such that
$$\overrightarrow{\rho} = \sum_{a,b_{1},\dots,b_n} c_{a,b_{1},\dots,b_n}M_{D}^{a}\prod_{i} M_{(1,\omega_{i})}^{b_{i}} (1^{m}).$$
Therefore, we have the operator equality
$$M_{\overrightarrow{\rho}} = \sum_{a,b_{1},\dots,b_n} c_{a,b_{1},\dots,b_n} M_{D}^{a}\prod_{i} M_{(1,\omega_{i})}^{b_{i}}$$
which implies that the matrix elements of the left-hand side are rational functions in $q,s_{1},\dots,s_{n}$.

For the GW/DT matching, as explained in \cite{gwan}, the relative Gromov-Witten theory of $X$ defines a ring structure on
$$H_{T}^{\ast}(\Hb_{m}(\An),\QQ)\otimes\QQ(t_1,t_2)((u,s_{1},\dots,s_{n})).$$
Since divisor operators coincide for both rings, after the change of variables $-q = e^{iu}$, the same
argument given for the small quantum product implies the matching for all three-point functions.

Finally, for $k >3$, there is a degeneration of $\mathbf{P}^{1}$ with $k$ marked points to a
chain of $k-2$ rational curves, each with $3$ marked or nodal points. Since the GW/DT matching is preserved under degeneration, this reduces us to the case of $k=3$.
\end{proof}

The ring deformation $\circ$ of $H_{T}^{\ast}(\Hb(\An),\QQ)$ defined using relative DT theory is valid for any surface $S$.  While we always expect a matching with relative Gromov-Witten theory of
$S\times\mathbf{P}^{1}$,
the naive matching with the small quantum cohomology ring is not true in general.  For instance,
already when $m=1$ for $\mathbf{P}^{2}$, the structure constants for $\circ$
have a nontrivial $q$-dependence while the quantum cohomology structure constants
have no $q$-dependence.  However, we expect that there is a modification that will make
the matching of Theorem \ref{hilb dt theorem} valid for all surfaces $S$.

Finally, we conclude by sketching how to extend the results here to $\An$-bundles over higher genus curves.  Given the rank $2$ bundle $\mathcal{O}(a)\oplus\mathcal{O}(b)$ over a curve $C$, let $X(a,b)$ be the threefold obtained via fiberwise quotient and resolution by $\mathbb{Z}_{n+1}$.  It follows from the degeneration formula and the enriched TQFT formalism of \cite{localcurves} that the DT-theory of $X(a,b)$ can be computed in terms of the following pieces.  First, we have the theory of $\An\times\mathbf{P}^{1}$, relative to 1,2, or 3 fibers - computed here under the assumption of the generation conjecture.  Second, we have the theory of $X(0,-1)$ relative to a fiber.  These integrals can be evaluated under the Calabi-Yau specialization $t_1+t_2+t_3=0$, where the results of \cite{mnop1} apply.  For the comparison to Gromov-Witten theory for $X(0,-1)$, this follows from the topological vertex formalism, proven in \cite{lllz, MOOP}.

\vspace{+10 pt}
\noindent 
Department of Mathematics\\
Columbia University\\
New York, NY 10027, USA\\
dmaulik@cpw.math.columbia.edu

\vspace{+10 pt}
\noindent
Department of Mathematics\\
Princeton University\\
Princeton, NJ 08544, USA\\
oblomkov@math.princeton.edu


\begin{thebibliography}{99}
%\bibitem[BKP]{bryan-katz-leung} J. Bryan, S. Katz, N.C. Leung, Multiple covers
%and the integrality conjecture for rational curves in Calabi-Yau threefolds, 
%J. Algebraic Geom., 10 (3), 549-568, 2001.

\bibitem[BP]{localcurves}  J. Bryan, R. Pandharipande    The local Gromov-Witten theory of curves, math/0411037.


\bibitem[GP]{grabpan} T. Graber, R. Pandharipande,  Localization of virtual classes.  
Invent. Math.  135  (1999),  no. 2, 487--518. 

\bibitem[Gr]{grojnowski} I. Grojnowski, Instantons and affine algebras. I. The Hilbert scheme and vertex operators.  Math. Res. Lett.  3  (1996),  no. 2, 275--291. 

\bibitem[LLLZ]{lllz} 
 Jun Li, Chiu-Chu Melissa Liu, Kefeng Liu, Jian Zhou, 
A Mathematical Theory of the Topological Vertex, arXiv:math/0408426. 

\bibitem[MNOP1]{mnop1} D. Maulik, N. Nekrasov, A. Okounkov, R. Pandharipande, Gromov-Witten theory and Donaldson-Thomas theory I,
Compositio Math., 142 (2006), no. 5, 1263-1285.

\bibitem[MNOP2]{mnop2} D. Maulik, N. Nekrasov, A. Okounkov, R. Pandharipande, Gromov-Witten theory and Donaldson-Thomas theory II,
Compositio Math., 142 (2006), no. 5, 1286-1304.


\bibitem[M]{gwan} D. Maulik, Gromov-Witten theory of $\An$ resolutions, 
arXiv:math/0802.2681.

\bibitem[MO]{hilban} D. Maulik, A. Oblomkov, Quantum cohomology of the Hilbert scheme of points on $\An$-resolutions, arXiv:math/0802.2737.

\bibitem[MOOP]{MOOP} D. Maulik, A. Oblomkov, A. Okounkov, R. Pandharipande, GW/DT correspondence
for toric threefolds, in preparation.

\bibitem[MP]{tvgw} D. Maulik, R. Pandharipande, A Topological View of Gromov-Witten Theory, Topology, 45, (2006), no. 5, 887-918.

\bibitem[N1]{nakajima} H. Nakajima,  Heisenberg algebra and Hilbert schemes of points on projective surfaces.  Ann. of Math. (2)  145  (1997),  no. 2, 379--388. 

%\bibitem[N2]{nakjack}  H. Nakajima, Jack polynomials and Hilbert schemes of points on surfaces, %1996, math.AG/9610021.


%\bibitem[N3]{nakajimabook} H. Nakajima, Lectures on Hilbert schemes of points on surfaces. University Lecture Series, 18. American Mathematical Society, Providence, RI, 1999.

\bibitem[OP1]{okpanhilb} A. Okounkov, R. Pandharipande,   Quantum cohomology of the Hilbert scheme of points in the plane, math/0411210. 

\bibitem[OP2]{okpandt} A. Okounkov, R. Pandharipande, The local Donaldson-Thomas theory of curves, math/0512573 


%\bibitem[OP3]{okpanp1} A. Okounkov, R. Pandharipande, Gromov-Witten theory, Hurwitz theory, and completed cycles.  Ann. of Math. (2)  163  (2006),  no. 2, 517--560.

\bibitem[PT]{pt} R. Pandharipande, R. Thomas, Curve counting via stable pairs in the derived category, arXiv:math/0707.2348. 


\bibitem[QW]{qin-wang} Z. Qin, W. Wang, Hilbert schemes of points on the minimal resolution
and soliton equations. Contemp. Math. 442 (2007), 435-462.

\bibitem[W]{baosenwu} Baosen Wu, The moduli stack of stable relative ideal sheaves,  arXiv:math/0701074.
\end{thebibliography}
\end{document}